\let\oldbibliography\thebibliography
\renewcommand{\thebibliography}[1]{%
  \oldbibliography{#1}%
  \setlength{\itemsep}{-1.2mm}%
}
\theoremstyle{plain}
\newtheorem{thm}{Theorem}[section]
\newtheorem{cor}[thm]{Corollary}
\newtheorem{lem}[thm]{Lemma}
\newtheorem{prop}[thm]{Proposition}
\newtheorem{rem}[thm]{Remark}
\theoremstyle{definition}
\newtheorem{defn}[thm]{Definition}
\newtheorem{ex}[thm]{Example}
\newtheoremstyle{myremark}
  {3pt}
  {3pt}
  {\small \rmfamily}
  {5pt}
  {\rmfamily}
  {:}
  {.5em}
  {}
\theoremstyle{myremark}
\def\txtd{{\textnormal{d}}}
\def\txte{{\textnormal{e}}}
\def\ra{\rightarrow}
\def\I{\infty}
\newcommand{\be}{\begin{equation}}
\newcommand{\ee}{\end{equation}}
\newcommand{\benn}{\begin{equation*}}
\newcommand{\eenn}{\end{equation*}}
\newcommand{\bea}{\begin{eqnarray}}
\newcommand{\eea}{\end{eqnarray}}
\newcommand{\beann}{\begin{eqnarray*}}
\newcommand{\eeann}{\end{eqnarray*}}
\newcommand{\myendex}{$\blacklozenge$\end{ex}}
\newcommand{\myendexerc}{$\lozenge$\end{exerc}}
\newcommand{\myendpexerc}{$\lozenge$\end{pexerc}}
\begin{document}
\numberwithin{equation}{section}
\author{Marios-Antonios Gkogkas\thanks{
		Department of Mathematics, Technical University 
		of Munich, 85748 Garching b.~M\"unchen, Germany}~~and~~Christian Kuehn~\footnotemark[1]}

\title{Graphop Mean-Field Limits for Kuramoto-Type Models}

\maketitle

\begin{abstract}
Originally arising in the context of interacting particle systems in statistical physics, dynamical systems and differential equations on networks/graphs have permeated into a broad number of mathematical areas as well as into many applications. One central problem in the field is to find suitable approximations of the dynamics as the number of nodes/vertices tends to infinity, i.e., in the large graph limit. A cornerstone in this context are Vlasov-Fokker-Planck equations (VFPEs) describing a particle density on a mean-field level. For all-to-all coupled systems, it is quite classical to prove the rigorous approximation by VFPEs for many classes of particle systems. For dense graphs converging to graphon limits, one also knows that mean-field approximation holds for certain classes of models, e.g., for the Kuramoto model on graphs. Yet, the space of intermediate density and sparse graphs is clearly extremely relevant. Here we prove that the Kuramoto model can be be approximated in the mean-field limit by far more general graph limits than graphons. In particular, our contributions are as follows. (I) We show, how to introduce operator theory more abstractly into VFPEs by considering graphops. Graphops have recently been proposed as a unifying approach to graph limit theory, and here we show that they can be used for differential equations on graphs. (II) For the Kuramoto model on graphs we rigorously prove that there is a VFPE equation approximating it in the mean-field sense. (III) This mean-field VFPE involves a graphop, and we prove the existence, uniqueness, and continuous graphop-dependence of weak solutions. (IV) On a technical level, our results rely on designing a new suitable metric of graphop convergence and on employing Fourier analysis on compact abelian groups to approximate graphops using summability kernels.    
\end{abstract}

\textbf{Keywords:} Kuramoto model on graphs, mean field limit, Vlasov Fokker-Planck equation, graphops, o-convergence, summability kernel. 


\section{Introduction}
\label{sec:intro}

Synchronization, or in other words the effect under which a system of coupled oscillators with different individual initial frequencies pulses, after a while, under the same single global frequency, is a phenomenon which can be found in various biological, ecological, social and technological processes \cite{Strogatz2000}. An important first model for synchronization was developed by Kuramoto~\cite{Kuramoto75}. This model considers a finite number of $N$ different oscillators. Each oscillator has an intrinsic frequency $\omega_i \in \mathbb{R}$ for $i=1,...N$. The frequencies are distributed according to a symmetric probability density function $g:\mathbb{R} \to [0,\infty)$. The phase of each oscillator $u_i(t) \in[0,2\pi) =: \mathbb{T}$ are the unknowns satisfying the following system of ordinary differential equations (ODEs)
\begin{equation}
\label{eq: kuramotos model intro}
\dot{u}_i = \omega_i + \frac{C}{N}\sum_{j=1}^N\sin(u_j - u_i), \quad i \in [N]:= \{1,2,...,N\},
\end{equation}
where the parameter $C>0$ is the coupling strength. Further, let $\rho(u,\omega,t)~\txtd u$ denote the fraction of oscillators with frequency $\omega$ and phase between $u$ and $\txtd u$ for time $t$. Sakaguchi~\cite{Sakaguchi88} proposed that the Kuramoto model (\ref{eq: kuramotos model intro}) can be approximated, as $N\ra \I$, by the single mean field Vlasov-Fokker-Planck equation (VFPE)
\begin{equation}
\label{eq: mean field eq intro}
\frac{\partial{\rho}}{\partial{t} } + \frac{\partial{}}{\partial{u} }(\rho V(\rho)) = 0,
\end{equation}
with the characteristic field
\begin{equation}
V(\rho)(u,\omega,t) = \omega + C \int_0^{2\pi}\int_{\mathbb{R}} \sin(\tilde{u}-u)\rho(\tilde{u},\tilde{\omega},t)g(\tilde{\omega})~\txtd\tilde{\omega}~\txtd\tilde{u}.
\end{equation}
Although the formal derivation of (\ref{eq: mean field eq intro}) from (\ref{eq: kuramotos model intro}) has been extensively studied in the literature (see for example \cite{Crawford99,Strogatz2000} and references therein), it was only proved rigorously around fifteen years ago by Lancelloti \cite{Lancellotti2015}. His approach was to view (\ref{eq: mean field eq intro}) as an abstract continuity equation of measures and then apply Neunzert's fixed point argument \cite{Neunzert1978,Neunzert84}. Of course, the classical Kuramoto model~\eqref{eq: kuramotos model intro} makes the unrealistic assumption that every oscillator equally affects everyone else and that coupling takes place exactly via the first Fourier mode represented by the sine nonlinearity. For more precise models we should take into account the network coupling structure of the system and more general Fourier modes. This generalized Kuramoto-type model on an arbitrary network/graph takes the form
\begin{equation}
\label{eq: kuramotos model on graphs intro}
\dot{u}_i = \omega_i +\frac{C}{N}\sum_{j=1}^NA^N_{i,j}D(u_j - u_i), \quad i \in [N],
\end{equation}
where  $A = (A^N_{i,j})_{i,j =1,...N } \in \mathbb{R}^{N\times N}$ is the adjacency matrix of the network of oscillators and the coupling function $D:\mathbb{T} \to \mathbb{R}$ satisfies the Lipschitz condition 
\begin{equation}
\label{eq: Lipschitz condition for D}
| D(u) - D(v) | \leq | u - v |, \quad \forall u,v \in \mathbb{T}.
\end{equation}
Further, without loss of generality we may assume that
\begin{equation}
\label{eq: bound. for D}
\max_{u \in \mathbb{T}}|D(u)| \leq 1.
\end{equation}
The recent development of graph limit theory~\cite{Lovasz2006} enabled the rigorous treatment of approximating limits as $N\to \infty$ for several classes of graphs converging, in a suitable sense, towards a graph limit~\cite{Medv2014,Medv(sparse)2014,Medvedev2018,Medv2016}. For example, Kaliuzhnyi-Verbovetskyi and Medvedev \cite{Medvedev2018} treat the case that there exists a graphon limit $W:[0,1] \times [0,1] \to [0,1]$, i.e., $W$ is a measurable and symmetric function such that the weights $A^N_{i,j}$ are given by 
\begin{equation}
\label{eq: weights in intro}
A^N_{i,j}:= \int_{I^N_i\times I^N_j} W(x,y) ~\txtd x ~\txtd y.
\end{equation}
Here, $\{I^N_i\}_{i=1,...,N}$ is the partition of $I:= [0,1]$ given (up to measure 0) by the intervals $I^N_i:= [\frac{i-1}{N},\frac{i}{N}]$. Consider the family of empirical measures
\begin{equation}
\label{eq: empirical measure graphon intro}
\nu^x_{n,M,t}(S) = M^{-1} \sum_{j=1}^M \chi_S(u^N_{(i-1)M+j}(t)), \quad S \in \mathcal{B}(\mathbb{T}),\quad  x \in I^N_i, 
\end{equation}
where $\mathcal{B}(\mathbb{T})$ denotes the Lebesque $\sigma$-algebra on $\mathbb{T}$, $\chi_S$ is the indicator function of $S$, and $u^N_{i}(t)$ is the solution of (\ref{eq: kuramotos model on graphs intro}). Then one may prove~\cite{Medvedev2018} that the empirical measure~\eqref{eq: empirical measure graphon intro} approximates as $N\to \infty$, in a suitable distance and under certain initial conditions of the Kuramoto model (\ref{eq: kuramotos model on graphs intro}), the family of continuous measures
\begin{equation}
\nu_t^x(S) = \int_S\int_\mathbb{R} \rho(t,\tilde{u},\tilde{\omega},x)~\txtd\tilde{\omega}~\txtd\tilde{u},  \quad S \in \mathcal{B}(\mathbb{T}), \quad x \in I,
\end{equation} 
where $\rho$ solves the mean field equation
\begin{equation}
\label{eq: mean field eq graphons intro}
\frac{\partial{\rho}}{\partial{t} } + \frac{\partial{}}{\partial{u} }(\rho V[W](\rho)) = 0.
\end{equation}
As we might expect, this VFPE is similar to (\ref{eq: mean field eq intro}), with the only difference that the characteristic field $V=V[W]$ now depends on the graphon and is explicitly given by
\begin{equation}
V[W](\rho)(u,\omega,x,t) = \omega + C \int_{I}\int_0^{2\pi}\int_{\mathbb{R}}D(\tilde{u}-u) W(x,y)\rho(\tilde{u},\tilde{\omega},y,t) g(\tilde{\omega}) ~\txtd \tilde{\omega}~\txtd\tilde{u}~\txtd y.
\end{equation}
All recent approaches for the mean-field limit relied on the fact that the limiting graph is given by a graphon (i.e., it is a dense graph) and a natural question arises is, how to treat the case of limiting graphs with intermediate densities or sparse graphs? A major obstacle was that up until recently, infinite sparse and dense graphs had been extensively studied in the graph limit theory but using very different convergence notions, which relied frequently on combinatorial ideas difficult to incorporate into analysis-based methods used for differential equations. Even beyond this challenge, no unified theory existed for the treatment of graphs of intermediate density. Only recently Backhausz and Szegedy~\cite{Scededgy2018} provided a far more general framework unifying dense and sparse graph limit theory. The novel viewpoint is that graphs can be represented via suitable operators, the so-called graphops.

The main goal of the present paper is to prove that mean field approximation of the Kuramoto-type models of the form (\ref{eq: kuramotos model on graphs intro}) is possible in many cases if the limiting graph is given by a graphop. Since graphops cover a very large and general class of graph limits~\cite{Scededgy2018}, we believe that our results can also provide the basis for a very broad use of graphops in differential equations arising from dynamics on networks/graphs. Next, we are going to introduce some basic facts about graphops. After that, we come back to discuss the central question in this paper in more detail.

\subsection{Representation of graphs via graphops}

Graphops were introduced in \cite{Scededgy2018} as a new way for representing graphs to unify the language provided for dense graph limits and Benjamini-Schramm limits, and to include also graphs of intermediate density. We quickly summarize some basic notions and results given in \cite{Scededgy2018}. Let  $(\Omega, \Sigma,m)$ be a Borel probability space and $\Omega$ a compact set. A \textit{P-operator} is a linear operator $A:L^\infty(\Omega,m) \to L^1(\Omega,m)$ which is bounded, i.e., it has a finite operator norm
\begin{equation*}
\parallel A \parallel_{\infty \to 1} := \sup_{v \in L^\infty(\Omega)} \frac{\parallel Av \parallel_1}{\parallel v \parallel_\infty} < \infty.
\end{equation*}
More generally, for a $P$-operator $A$, the operator norm $\parallel A\parallel_{p\to q}$, for any real numbers $p,q\in [1,\infty]$, is given by
\begin{equation*}
\parallel A \parallel_{p\to q} := \sup_{v \in L^\infty(\Omega)} \frac{\parallel Av \parallel_q}{\parallel v \parallel_p}.
\end{equation*}
$\mathcal{PB}(\Omega,\Sigma,m)$ denotes the space of all P-operators on $\Omega$; if the underlying measure $m$ is clear we simply write $\mathcal{PB}(\Omega)$). In the space of P-operators, objects which represent graphs are the so-called graphops. A P-operator $A$ is called a \textit{graphop} if it is positivity preserving and self-adjoint. To be more precise, positivity preserving means that 
\begin{equation*}
v(x)\geq 0  \quad \text{for $m$-a.e.}~x\in \Omega \Rightarrow~Av(x) \geq 0 \quad \text{for $m$-a.e.}~x\in \Omega,
\end{equation*}
and self-adjoint here means that for any $v,w \in L^\infty(\Omega,m)$ we have
\begin{equation*}
\langle Av,w\rangle = \langle v,Aw\rangle,
\end{equation*}
with the bilinear from $\langle v,w \rangle := \int_{\Omega} v(x)w(x)~\txtd m(x)$. We also write $\langle v,w \rangle_A := \langle Av,w\rangle$.

Intuitively, the space $\Omega$ represents the \emph{node set} of the graph. Its \emph{edge set} is represented by a symmetric fiber measure $\nu$ on the product set $\Omega \times \Omega$, which exists for any graphop $A$ according to following theorem:

\begin{thm}\cite{Scededgy2018} \textbf{(Measure representation of graphops)} 
	\label{thm: measure repres of graphops}\\
	Assume that $A: L^\infty(\Omega,m) \to L^1(\Omega,m)$ is a graphop. Then following statements are true:
	\begin{enumerate}
		\item There is a unique finite measure $\nu$ on $(\Omega \times \Omega, \Sigma \times \Sigma)$ with the following properties: \\
		\textbf{(i)} $\nu$ is symmetric. \\
		\textbf{(ii)} The marginal distribution $\pi_*\nu$ of $\nu$ on $\Omega$ is absolutely continuous with respect to $m$. Here $\pi:\Omega\times \Omega \to \Omega$ denotes the canonical projection and $\pi_*$ is the associated pushforward.\\
		\textbf{(iii)} For every $f,g\in L^\infty(\Omega,m)$ holds:
		\begin{equation*}
		\langle f ,g \rangle _A = \int_{\Omega^2} f(x)g(y)~\txtd \nu(x,y) = \int_{\Omega^2} g(x)f(y)~\txtd \nu(x,y) = \langle g ,f \rangle _A .
		\end{equation*}	
		\item There is a family $\{\nu_x\}_{x \in \Omega}$ of finite measures (called \textit{fiber measures}), such that for all $f \in L^\infty(\Omega,m)$ we have 
		\begin{equation*}
		(Af)(x) = \int_{\Omega} f(y) ~\txtd\nu_x(y) \text{ \quad $m$-a.e. $x \in \Omega$.}
		\end{equation*}
		For this family we have additionally for any $h\in L^\infty(\Omega^2,\nu)$
		\begin{equation*}
		\int_{\Omega^2} h(x,y)~\txtd\nu_x(y)~\txtd m(x) = \int_{\Omega^2} h(x,y) ~\txtd \nu(x,y).
		\end{equation*}
	\end{enumerate}		
\end{thm}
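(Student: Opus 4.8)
The plan is to obtain $\nu$ as the Riesz--Markov measure of a positive linear functional on $C(\Omega\times\Omega)$ extracted from $A$, and then to read off the fibre measures $\{\nu_x\}$ by disintegrating $\nu$ over its first marginal. Throughout I use that $\Omega$, being compact and metrizable, is a standard Borel space and that $C(\Omega)$ is a dense subalgebra of $L^\infty(\Omega,m)$. On $C(\Omega)$ put $B(f,g):=\langle f,g\rangle_A=\int_\Omega (Af)\,g\,\txtd m$. Self-adjointness of $A$ makes $B$ symmetric; positivity preservation makes it positive in the order sense, i.e.\ $f,g\ge 0\Rightarrow B(f,g)\ge 0$; and boundedness of $A$, on comparing $f$ with $\|f\|_\infty\mathbf 1$ (with $\mathbf 1$ the constant function $1$), gives $0\le B(f,g)\le \kappa\,\|f\|_\infty\|g\|_\infty$ for $f,g\ge 0$, where $\kappa:=\langle\mathbf 1,A\mathbf 1\rangle$; polarizing, $|B(u,v)|\le 4\kappa\|u\|_\infty\|v\|_\infty$ for arbitrary $u,v\in C(\Omega)$.

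The heart of the matter is constructing $\nu$. Bilinearity of $B$ defines a linear functional $\Lambda$ on the algebraic tensor product $\mathcal D:=C(\Omega)\otimes C(\Omega)$, viewed injectively (since $C(\Omega)$ separates points) inside $C(\Omega\times\Omega)$, by $\Lambda\big(\sum_i f_i\otimes g_i\big):=\sum_i B(f_i,g_i)$. I claim $\Lambda\ge 0$ on every nonnegative $G\in\mathcal D$. Writing $G=\sum_{i=1}^n f_i\otimes g_i$, fix a finite open cover of $\Omega$ on which all the $f_i,g_i$ oscillate by less than $\varepsilon$, a subordinate partition of unity $\{\theta_k\}$, and sample points $x_k$; then with $\hat f_i:=\sum_k f_i(x_k)\theta_k$, etc., one has $\|f_i-\hat f_i\|_\infty,\|g_i-\hat g_i\|_\infty<\varepsilon$, hence (expanding the tensor products and applying the crude bound $|B(u,v)|\le 4\kappa\|u\|_\infty\|v\|_\infty$ to the remainder terms) $\Lambda(G)=\Lambda(G_\delta)+O(\varepsilon)$ where $G_\delta:=\sum_{k,\ell}G(x_k,y_\ell)\,\theta_k\otimes\theta_\ell\to G$ uniformly. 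If $G\ge 0$ then $G(x_k,y_\ell)\ge 0$, so $G_\delta$ is a finite sum of tensor products of nonnegative functions and $\Lambda(G_\delta)\ge 0$; letting $\varepsilon\to 0$ gives $\Lambda(G)\ge 0$. Since $\mathcal D$ contains the order unit $\mathbf 1\otimes\mathbf 1$ it is majorizing, so by the Krein extension theorem $\Lambda$ extends to a positive (hence bounded) linear functional on $C(\Omega\times\Omega)$, and Riesz--Markov yields a finite positive Borel measure $\nu$ with $\langle f,g\rangle_A=\int_{\Omega^2}f(x)g(y)\,\txtd\nu(x,y)$ for all $f,g\in C(\Omega)$. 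Uniqueness of such $\nu$, and symmetry of $\nu$, follow because tensor products are total in $C(\Omega^2)$ together with $B(f,g)=B(g,f)$ and uniqueness of the Riesz--Markov measure. For the marginal, $\int_\Omega f\,\txtd(\pi_*\nu)=\int_{\Omega^2}f(x)\,\txtd\nu=B(f,\mathbf 1)=\int_\Omega f\,(A\mathbf 1)\,\txtd m$, so $\pi_*\nu=(A\mathbf 1)\,m\ll m$, which is (ii).

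For part 2, disintegrate $\nu$ over $\pi_*\nu$ (legitimate since $\Omega\times\Omega$ is standard Borel): $\txtd\nu(x,y)=\txtd\tilde\nu_x(y)\,\txtd(\pi_*\nu)(x)$ with $\tilde\nu_x$ probability measures. Setting $\nu_x:=(A\mathbf 1)(x)\,\tilde\nu_x$ rewrites this as $\txtd\nu(x,y)=\txtd\nu_x(y)\,\txtd m(x)$, which is precisely the asserted identity $\int_{\Omega^2}h\,\txtd\nu_x(y)\,\txtd m(x)=\int_{\Omega^2}h\,\txtd\nu$. For $f\in C(\Omega)$ and any $g\in C(\Omega)$ we get $\int_\Omega g\,(Af)\,\txtd m=\langle f,g\rangle_A=\int_{\Omega^2}f(y)g(x)\,\txtd\nu=\int_\Omega g(x)\big(\int_\Omega f\,\txtd\nu_x\big)\txtd m(x)$, and since two $m$-integrable functions of $x$ agreeing after integration against every $g\in C(\Omega)$ coincide $m$-a.e., $(Af)(x)=\int_\Omega f\,\txtd\nu_x$ for $m$-a.e.\ $x$. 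To drop the continuity of $f$, approximate $f\in L^\infty(\Omega,m)$ by $f_n\in C(\Omega)$ with $\|f_n\|_\infty$ bounded and $f_n\to f$ $m$-a.e.; self-adjointness gives $\langle Af_n,g\rangle=\langle f_n,Ag\rangle\to\langle Af,g\rangle$ for all $g\in L^\infty$, i.e.\ $Af_n\to Af$ weakly in $L^1(m)$, while $\pi_*\nu\ll m$ (and symmetry of $\nu$) forces $f_n\to f$ $\nu_x$-a.e.\ for $m$-a.e.\ $x$, so dominated convergence gives $\int_\Omega f_n\,\txtd\nu_{(\cdot)}\to\int_\Omega f\,\txtd\nu_{(\cdot)}$ in $L^1(m)$; matching the two limits proves $(Af)(x)=\int_\Omega f\,\txtd\nu_x$ $m$-a.e., and inserting this into $\langle f,g\rangle_A=\int(Af)g\,\txtd m$ recovers (iii) for all $f,g\in L^\infty(\Omega,m)$.

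The main obstacle is the construction of $\nu$: a general bounded bilinear form on $C(\Omega)\times C(\Omega)$ is \emph{not} represented by a measure on $\Omega\times\Omega$, so the order-positivity of $B$ must be used essentially, and the delicate point is to transfer that positivity to the extended functional on $C(\Omega\times\Omega)$ while keeping it bounded --- which is exactly the role of the discretization estimate $\Lambda(G)=\Lambda(G_\delta)+O(\varepsilon)$ with $G_\delta$ a nonnegative ``rank-one sum''. By contrast, the disintegration step (needing only that $\Omega$ is standard Borel) and the passage from $C(\Omega)$ to $L^\infty(\Omega,m)$ (where self-adjointness supplies the weak-$L^1$ continuity of $A$ along bounded $m$-a.e.\ convergent sequences) are comparatively routine.
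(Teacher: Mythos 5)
Your proposal is essentially correct, but note that the paper does not prove this theorem at all: it is imported verbatim from Backhausz--Szegedy \cite{Scededgy2018}, and the only proof-related remark in the text is that statement~2 follows from statement~1 by the disintegration theorem --- which is exactly how you handle part~2. So what you have written is a self-contained reconstruction rather than a variant of the paper's argument. As such it is sound: the decisive step, transferring the order-positivity of $B(f,g)=\langle Af,g\rangle$ from the bilinear form to the linear functional $\Lambda$ on $C(\Omega)\otimes C(\Omega)$, is genuinely needed (a general bounded bimeasure does \emph{not} extend to a measure on the product), and your discretization $\Lambda(G)=\Lambda(G_\delta)+O(\varepsilon)$ with $G_\delta=\sum_{k,\ell}G(x_k,x_\ell)\,\theta_k\otimes\theta_\ell$ a nonnegative combination of $B(\theta_k,\theta_\ell)\geq 0$ is a correct way to do it; the Krein/M.~Riesz extension plus Riesz--Markov, the identification $\pi_*\nu=(A\mathbf 1)\,m$, the disintegration, and the passage from $C(\Omega)$ to $L^\infty(\Omega,m)$ via Lusin approximation, weak-$L^1$ continuity of $A$ along bounded a.e.\ convergent sequences, and the observation that $m(N)=0$ forces $\nu_x(N)=0$ for $m$-a.e.\ $x$ are all in order. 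Two caveats. First, you assume $\Omega$ is compact \emph{metrizable}; the theorem as stated only assumes a compact Borel probability space, and the original reference works on general Borel probability spaces by building $\nu$ directly on measurable rectangles from $\langle \chi_S,\chi_T\rangle_A$ rather than through $C(\Omega\times\Omega)$. Metrizability is harmless in this paper (it is assumed from Remark~\ref{rem: pointwise convergence and graphop convergence} onward and is needed for your partition-of-unity, Riesz-uniqueness, and disintegration steps), but you should flag it as an added hypothesis. Second, your uniqueness argument identifies $\nu$ among \emph{regular} (Radon) measures agreeing with $B$ on continuous functions; on a compact metric space all finite Borel measures are regular, so this closes, but again it leans on metrizability.
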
 

For a given graphop $A$,  we call the family $\{\nu_x\}_{x \in \Omega}$ the \textit{fiber measures associated to the graphop $A$}. We sometimes also write $\nu_x^A$ to make the graphop dependence clear. Notice that the second statement in Theorem \ref{thm: measure repres of graphops} follows immediately from the first one, using the disintegration theorem. For a node $x \in \Omega$, the measure $\nu_x$ represents the neighborhood of $x$. Moreover, the number $A\chi_\Omega(x) = \nu_x(\Omega)$ is the degree of $x$. A particular interesting case occur, when all edges have the same degree, or in other words, when there exists a constant $c>0$ such that $A\chi_\Omega(x) = c\chi_\Omega(x)$. In this case the graphop $A$ is called \textit{$c$-regular}. A \textit{Markov graphop} is a $1$-regular graphop. Another important case occurs when all fiber measures $\{\nu_x\}_{x \in \Omega}$ are absolutely continuous with respect to $m$. In this case, by the Radon-Nikodym theorem, we have 
$\nu_x(dy) = W(x,y)~\txtd m(y)$ for a symmetric function $W: \Omega \times \Omega \to [0,\infty)$. 

\begin{defn}\label{def: graphons}
	A \textit{graphon}\footnote{Different definitions are used by different authors. In the classical literature, a graphon is considered to be a function $W:I\times I \to I$, but it is also known that (under additional assumptions) we may identify $\Omega=I$ with the unit interval \cite[Theorem 7.1]{Janson10}} is a measurable (wrt the product $\sigma$-algebra), symmetric, bounded and positive function $W:\Omega\times \Omega \to [0,\infty)$. 
\end{defn} 

The other way around, given a graphon $W$ we may easily obtain a graphop $A_W: L^\infty(\Omega,m) \to L^1(\Omega,m)$, via setting 
\begin{equation}\label{eq: graphop corr to graphon}
A_Wf(x):= \int_\Omega W(x,y)f(y)~\txtd m(y).
\end{equation}
Thus we may view the graphon space as a true subspace of the space of graphops. In the following, we will often call $A_W$ given in (\ref{eq: graphop corr to graphon}) itself a graphon and $W$ the corresponding (graphon) kernel.

\subsection{Main problem}
\label{sec: main problem}

Our starting point is an observation made by the second author in~\cite{Kuehn2020}: If we compare the original VFPE (\ref{eq: mean field eq intro}) with the VFPE on a graphon (\ref{eq: mean field eq graphons intro}), we observe that in equation (\ref{eq: mean field eq intro}) we formally replace
\begin{equation*}
\rho(\tilde{u},\tilde{\omega},t) \text{\quad by \quad} \underbrace{\int_{I}W(x,y) \rho(\tilde{u},\tilde{\omega},y,t)~\txtd y}_{= A_W\rho(x; \tilde{u}, \tilde{\omega},t)}.
\end{equation*}
Therefore, in the case that the sequence of adjacency matrices $A^N$ for the discrete Kuramoto model (\ref{eq: kuramotos model on graphs intro}) is converging in the sense of dense graph convergence towards a graphon, the effect on the mean field VFPE is best viewed as an operator action. Having in mind this observation and linking it to the new operator framework for representing graphs via graphops, one may conjecture~\cite{Kuehn2020} formally that if the limiting object of the sequence of graphs, in the sense of P-operator convergence, is a general graphop $A$, then in 
equation (\ref{eq: mean field eq intro}) we should replace 
\begin{equation*}
\rho(\tilde{u},\tilde{\omega},t) \text{\quad by \quad} A\rho(x;\tilde{u},\tilde{\omega},t).
\end{equation*}
Our main goal in this paper is to prove this conjecture rigorously. Furthermore, we want to provide a suitable solution theory for VFPEs involving graphops. Let us quickly discuss the main idea, how we are going to prove the approximation properties of the mean-field VFPE based upon the results for graphons. 

From now on in this paper, for notational simplicity, we shall restrict to the case that all frequencies $\omega_i=0 , i \in[N]$ are identical zero, but all results can be extended in a straightforward way to the general case of unequal frequencies, see the discussion in Section \ref{sec:conclusion}. Let $A: L^\infty(\Omega,m) \to L^1(\Omega,m)$ be a fixed graphop with node set given by a compact abelian group $\Omega$ equipped with the Haar measure $\mu_\Omega$ on the Lebesque sets. Assume that for the graphop $A$ we have found a sequence of graphons $A^K$ with corresponding kernels $W^K: \Omega \times \Omega \to [0,\infty)$, such that
\begin{equation*}
A^K \to A \quad \text{ as } K \to \infty,
\end{equation*}
where the convergence takes place in a carefully chosen topology. Further let $(\Omega^n_i)_{i=1,...,n}$ be a sequence partitions of $\Omega$ satisfying $m(\Omega^n_i)=\frac{1}{n}$ for all $i\in[n]$. For any fixed $M\in \mathbb{N}$ we set $N=nM$ and assume additionally that the partition satisfies $\Omega^N_{(i-1)M + k} \subset \Omega^n_i$ for all $n\in \mathbb{N}$ and $i \in [n], k \in [M]$. We can then define, for any $N,K\in \mathbb{N}$,  the \textit{weights}
\begin{equation}
\label{eq: weights Kurmato on T}
A^{N,K}_{i,j}:= N^2\int_{\Omega^N_i\times \Omega^N_j} W^K(x,y) ~\txtd x ~\txtd y
\end{equation}
and consider the \textit{(generalized) Kuramoto model}
\begin{subequations}
	\label{eq: Kuramoto problem graphon approximable graphops}
	\begin{alignat}{4}
	\dot{u}^{N,K}_{i} &= C N^{-1} \sum_{j=1}^N A^{N,K}_{i,j} 
	D(u^{N,K}_{j} - u^{N,K}_{i}), \\
	u^{N,K}_{i}(0) &= u^{N,0}_{i}, i\in [N].
	\end{alignat}
\end{subequations}
We recall that the coupling function $D:\mathbb{T} \to \mathbb{R}$ satisfies conditions (\ref{eq: Lipschitz condition for D}) and (\ref{eq: bound. for D}). Since $A^K$ is for any fixed $K\in \mathbb{N}$ a graphon, we can conclude by \cite{Medvedev2018} (generalized for graphons defined on the space $\Omega$, cf. Remark \ref{rem: graphons on Omega}), that the empirical measure
\begin{equation}
\label{eq: empirical measure sec graphon approxm}
\nu^x_{n,M,K,t}(S) = M^{-1} \sum_{j=1}^M \chi_D(u^{N,K}_{(i-1)M+j}(t)), \text{\quad} x \in \Omega^n_i, \quad  S \in \mathcal{B}(\Omega)
\end{equation}
approximates, as $N\to \infty$, the $K$-th continuous measure
\begin{equation}
\label{eq: cont. measure rK sec graphon approxm}
\nu^{x,K}_t(S) = \int_S\rho^K(t,u,x)~\txtd u, \text{\quad} x \in \Omega, \quad S \in \mathcal{B}(\Omega).
\end{equation}
where $\rho^K$ is the unique solution of the following mean field initial value problem (IVP), denoted by \textit{VFPE$^K$}, 
\begin{subequations}
	\label{eq: IVP for the VFPE^K}
	\begin{alignat}{4}
	\partial_t\rho^K(t,u,x) &= - \partial_u(\rho^K V[A^K]\rho^K)(t,u,x), \quad (t,u,x)\in [0,T]\times \mathbb{T}\times \Omega, \\
	\rho^K(0,u,x) &= \rho^0(u,x),
	\end{alignat}
\end{subequations}
corresponding to the graphon $A^K$ with initial condition $\rho^0$. We further define the limiting measure for $K\ra \I$ by 
\begin{equation}
\label{eq: cont. measure r sec graphon approxm}
\nu^{x}_t(S) = \int_S\rho(t,u,x)~\txtd u, \text{\quad} x \in \Omega, \quad S \in \mathcal{B}(\Omega),
\end{equation} 
where $\rho$ is the unique solution (cf.~Theorem \ref{thm: existence and uniqness for weak solutions of the VFPE with graphops} below) of the limiting IVP, denoted by \textit{VFPE$^\infty$}, 
\begin{subequations}
	\label{eq: IVP for the VFPE^infty}
	\begin{alignat}{4}
	\partial_t\rho(t,u,x) &= - \partial_u(\rho V[A]\rho)(t,u,x), \quad (t,u,x)\in [0,T]\times \mathbb{T}\times \Omega, \\
	\rho(0,u,x) &= \rho^0(u,x),
	\end{alignat}
\end{subequations}
corresponding to the graphop $A$, with the characteristic field $V[A]$ be given by 
\begin{equation}
\label{eq: characteristic mean field V intro}
V[A]\rho(t,u,x) := C\int_0^{2\pi} (A\rho) (t,\tilde{u},x) D(\tilde{u} - u)~\txtd\tilde{u}.
\end{equation}
If the convergence of the graphon approximation $A^K$ towards $A$ is strong enough, we can then hope that, under suitable assumptions (which we will discuss in more detail later), the $K$-th continuous measure (\ref{eq: cont. measure rK sec graphon approxm}) will be close to the measure (\ref{eq: cont. measure r sec graphon approxm}). In particular, the following diagram summarizes the proof technique: 
\begin{center}
	\begin{tikzpicture}
	\matrix (m) [matrix of math nodes,row sep=3em,column sep=4em,minimum width=2em]
	{		
		\text{Kuramoto's model (\ref{eq: Kuramoto problem graphon approximable graphops})} & \text{VFPE$^K$ (\ref{eq: IVP for the VFPE^K})}  \\
		&\text{VFPE$^\infty$ (\ref{eq: IVP for the VFPE^infty})} \\};
	\path[-stealth]
	(m-1-1)	edge  node [below] {$N \to \infty $} (m-1-2)
	(m-1-2)edge node [right] {$K \to \infty$} (m-2-2)
	(m-1-1)edge[dashed] node [left] {$N,K\to \infty $} (m-2-2);
	\end{tikzpicture}
\end{center}

The basic advantage of this approach is that, once we passed to the first limit $N\to \infty$, we can forget the discrete Kuramoto model (\ref{eq: Kuramoto problem graphon approximable graphops}) and we only have to work with a VFPE. A central point for this approach to succeed is that the  convergence of the approximating sequence $A^K$ towards $A$ should be 
\begin{itemize}
	\item\textit{weak enough}, to allow approximation via graphons of a big enough class of graphops.
	\item \textit{strong enough}, to guarantee that solutions of the VFPE for different graphops, which are ``close enough'' with respect to this topology, are themselves arbitrary close.
\end{itemize}
In particular, this is the analytic translation of the key problem in graph limit theory on the level of VFPEs. In our case, the following new convergence notion will actually work:
\begin{defn} 
	\textbf{(o-graphop convergence)} \label{defn: o-graphop conv} \\
	For the graphops $A^n, A$ on the same probability space $(\Omega,\Sigma,m)$ with associated fiber measures $\nu_x^n$ and $\nu_x$ we write
	\begin{equation*}
	A^n \to_o A  \quad :\Leftrightarrow \quad  \nu_x^n \to_w \nu_x \quad m-a.e.~x\in \Omega.
	\end{equation*}
\end{defn}

\begin{rem}
	\label{rem: pointwise convergence and graphop convergence}
	Assume that $\Omega$ is also a metric space.
	By  the definition of weak convergence \cite[Definition 13.12]{KlenkeBook2014} and  Portmanteau's Theorem \cite[Theorem 13.16]{KlenkeBook2014}, it follows immediately that o-convergence is equivalent to any of the following conditions: \\ 
\textbf{(i)	}$A^nf(x) \to Af(x)$ m-a.e.~$x\in \Omega$ for all
 continuous functions $f: \Omega\to \mathbb{R}$. \\
\textbf{(ii)	}$A^nf(x) \to Af(x)$ m-a.e.~$x\in \Omega$ for all
Lipschitz continuous functions $f: \Omega\to \mathbb{R}$. \\
\textbf{(iii)}$A^nf(x) \to Af(x)$ m-a.e.~$x\in \Omega$
	for all bounded functions $f: \Omega\to \mathbb{R}$ with $m(U_f)=0$, where $U_f$ denotes the set of points of discontinuity of $f$.  
\end{rem}

\subsection{Outline of the paper}

In Section \ref{sec:existence}, we build up necessary results so that our main question concerning mean field approximation for the Kuramoto model (\ref{eq: Kuramoto problem graphon approximable graphops}) is well posed. In particular, we start by studying the general VFPE (\ref{eq: IVP for the VFPE^infty}) with a graphop $A$, defined on an arbitrary compact Borel probability space $(\Omega,\Sigma,m)$. For this equation, we prove existence and uniqueness of solutions, cf.~Theorem \ref{thm: existence and uniqness of solutions for the fixed point eq} and Theorem \ref{thm: existence and uniqness for weak solutions of the VFPE with graphops}. Then, in Section \ref{sec: cont depen of the solution of the VFPE on the graphop} we prove that the solutions of the VFPE depend continuously on the graphop, cf.~Proposition \ref{prop: continuous dependence on the graphop II}. After this, in Section \ref{sec: graphon approx of graphops on compact groups}, using tools from classical Fourier Analysis, we show that in the case that the node space $\Omega$ is a compact abelian group which is equipped with the Haar measure $\mu_\Omega$, any graphop $A$ can be approximated (in the sense of Definition \ref{defn: o-graphop conv}) by suitable graphon regularizations $A^K$, cf.~Proposition \ref{prop: o graphon approx using summabi kernels}. These regularizations are obtained via convolution with summability kernels. Using these approximations, we finally come back in Section \ref{sec: mean field approximation} to our main question, which was summarized in the previous Section~\ref{sec: main problem}, i.e., to prove mean field approximation of the discrete Kuramoto model (\ref{eq: Kuramoto problem graphon approximable graphops}) via the VFPE (\ref{eq: IVP for the VFPE^infty}), cf.~Theorem \ref{thm: vfpe approximates the discrete Kuramoto's problem for graphon approximable graphops}. We prove the mean field approximation for a big class of graphops on compact abelian groups, which covers the case that $A$ is any $c$-regular or Markov graphop, cf.~Corollary \ref{cor: Mean field approximation for c-regular graphops}. Finally, in Section \ref{sec: conclusion and outlook} we summarize our results and discuss further generalizations and open problems. We also discuss the straightforward adaptations needed for the treatment of general initial frequencies $\{\omega_i\}_{i \in \mathbb{N}}$.

\section{Existence and uniqueness of solutions for the VFPE}
\label{sec:existence}

Assume $(\Omega,\Sigma,m)$ is a compact Borel probability space and that $A\in \mathcal{PB}(\Omega,\Sigma,m)$ is a fixed graphop with corresponding measure $\nu= \nu^A$ on $\Omega^2$, cf.~Theorem~\ref{thm: measure repres of graphops}. Further, let $\{\nu^A_x\}_{x \in \Omega}$ be the family of fiber measures associated to the graphop A, i.e., for all $f \in L^\infty(\Omega,m)$ we have:
\begin{equation}
(Af)(x) = \int_{\Omega} f(y) ~\txtd\nu_x^A(y), \quad  m\text{-a.e. } x\in \Omega,
\end{equation}
cf.~Theorem \ref{thm: measure repres of graphops}. Since we work with the fixed given graphop $A$, we can (and will) always assume in this section that $\{\nu_x^A\}_{x\in \Omega}$ are fixed representatives defined on the whole space $\Omega$. In this section we are interested in proving existence and uniqueness of weak solutions for the general VFPE (\ref{eq: IVP for the VFPE^infty}) with the graphop $A$. We consider following initial value problem (IVP) for the VFPE 
\begin{subequations}
	\label{eq: IVP for the VFPE}
	\begin{alignat}{4}
	\partial_t\rho(t,u,x) &= - \partial_u(\rho V(\rho))(t,u,x) \quad (t,u,x)\in [0,T]\times \mathbb{T}\times \Omega, \\
	\rho(0,u,x) &= \rho^0(u,x),
	\end{alignat}
\end{subequations}
where the characteristic field $V(t,u,x) = V[A,\rho,x](t,u)$ is given by
\begin{equation}
\label{eq: characteristic mean field V}
V[A,\rho,x](t,u) := C\int_0^{2\pi} (A\rho) (t,\tilde{u},x) D(\tilde{u} - u) ~\txtd\tilde{u}.
\end{equation}
For any positive time $T> 0$ we set $\mathcal{T}:=[0,T]$.	Following \cite{Neunzert84,Medvedev2018}, we state following definition:

\begin{defn} \textbf{(Weak solutions for the VFPE)}\\
	A measurable function $\rho: \mathcal{T} \times \mathbb{T} \times \Omega \to \mathbb{R}$ is called a \textit{ weak solution of the IVP (\ref{eq: IVP for the VFPE}) for the VFPE}, if following conditions hold for $m$-a.e. $x\in \Omega$: 
	\begin{enumerate}
		\item $\rho(t,u,x)$ is weakly continuous in $t\in \mathbb{T}$, i.e., the map $t\mapsto \int_{\mathbb{T}} \rho(t,u,x) f(u) ~\txtd u$ is continuous for every $f\in C(\mathbb{T})$;
		\item for every $w\in C^1(\mathcal{T}\times \mathbb{T})$ with support in $[0,T) \times \mathbb{T}$ it holds that
		\begin{equation*}	   
		\int_0^T \int_{\mathbb{T}} \rho(t,u,x)\Big( \partial_t w(t,u) + V(t,u,x)\partial_u w(t,u)   \Big) ~\txtd u 	~\txtd t+ \int_{\mathbb{T}} w(0,u) \rho_0(u,x) ~\txtd u =0.
		\end{equation*}	
	\end{enumerate}	
\end{defn}

It can be shown \cite[remarks after eq. (10)]{Neunzert84} that if $\rho$ and $V$ are both sufficiently smooth, then $\rho$ is also a classical solution of the IVP of the VFPE (\ref{eq: IVP for the VFPE}). As we are going to see later in Section \ref{sec: the fixed point eq}, Neunzert's fixed point argument \cite{Neunzert1978,Neunzert84} translates the VFPE (\ref{eq: IVP for the VFPE}) to a fixed point equation for measures. Hence, let us now define the measure spaces we are going to work with.

\subsection{The measure spaces}\label{sec: the measures spaces}

Let $\mathcal{M}_f = \mathcal{M}_f(\mathbb{T})$ denote the space of finite Borel measures equipped with the bounded Lipschitz metric
\begin{equation*}
d_{BL}(\mu, \nu) :=  \sup_{f\in \mathcal{L}}\Big| \int_\mathbb{T} f(v) ~\txtd(\mu-\nu) (v) \Big|
\end{equation*}
where 
\begin{equation*}
\mathcal{L}:= \{ f:\mathbb{T} \to [0,1], \text{f is Lipschitz with Lipschitz constant $\leq$ 1} \}.
\end{equation*}
It is known that $(\mathcal{M}_f, d_{BL})$ is a complete metric space. Further, for any $b> 0$, we define the space
\begin{equation*}
\bar{\mathcal{M}}^{b}:= \{ \bar{\mu}: \Omega \to \mathcal{M}_f(\mathbb{T}):  \bar{\mu} \text{ is measurable, and } \sup_{x \in \Omega} \mu^x(\mathbb{T}) \leq b \},
\end{equation*}
where $\mu^x\in \mathcal{M}_f(\mathbb{T})$ denotes the evaluation of the family of measures $\bar{\mu}$ at $x$. In the following we assume additionally that for the graphop $A$ the following condition is satisfied 
\begin{equation}
\label{eq: cond: gamma_A leq 1}
\gamma_A:= \sup_{x \in \Omega}\Big(\nu_x^A(\Omega) \Big) \leq 1.
\end{equation}
Then, on $\bar{\mathcal{M}}^b\times\bar{\mathcal{M}}^b $ and with the graphop $A$, we set
\begin{equation*}
\bar{d}^{b,A} (\bar{\mu},\bar{\kappa} ):=  \sup_{x\in \Omega} \underbrace{\Big(\int_\Omega d_{BL}(\mu^y,\kappa^y) ~\txtd\nu_x^A(y) \Big)}_{=: \bar{d}^{b,A,x}}.
\end{equation*}
We further define the sets
\begin{align*}
\mathcal{G} &:= \{ B \in \mathcal{PB}(\Omega,\Sigma,l): l \text{ is a probability measure}, B \text{ is a graphop, } \gamma_B \leq 1\}, \\
\mathcal{G}^{m} &:= \{ B \in \mathcal{PB}(\Omega,\Sigma,m): B \text{ is a graphop }, \gamma_B \leq 1\},
\end{align*}
and the following metric (cf.~Lemma \ref{lem: db and db2 are equival metrics}) on $\bar{\mathcal{M}}^b$ 
\begin{equation}
\bar{d}^{b} (\bar{\mu},\bar{\kappa} ):= \sup_{B \in \mathcal{G}} \bar{d}^{b,B} (\bar{\mu},\bar{\kappa} ) = \sup_{B \in \mathcal{G}} \sup_{x\in \Omega} \Big(\int_\Omega d_{BL}(\mu^y,\kappa^y) ~\txtd\nu_x^B(y) \Big)
\end{equation}
It can be shown that $(\bar{\mathcal{M}}^{b},\bar{d}^{b})$ is a complete metric space, cf.~Lemma  \ref{lem: Mb is complete}. We also define the space
\begin{equation*}
\mathcal{M}_\mathcal{T}^{b} := C(\mathcal{T}, (\bar{\mathcal{M}}^b, \bar{d}^{b}))
\end{equation*}
and equip it with following metric for a fixed $\alpha> 0$ 
\begin{equation}
d_\alpha^{b}(\bar{\mu}_\cdot,\bar{\nu}_\cdot):= \sup_{ t\in \mathcal{T}} \txte^{-\alpha t} \bar{d}^{b}(\bar{\mu}_t,\bar{\nu}_t).
\end{equation}
We note that
\begin{equation*}
\txte^{-\alpha T}d^{b}(\bar{\mu}_\cdot,\bar{\nu}_\cdot) \leq d_\alpha^{b}(\bar{\mu}_\cdot,\bar{\nu}_\cdot) \leq d^{b}(\bar{\mu}_\cdot,\bar{\nu}_\cdot),
\end{equation*} 
where the metric
\begin{equation*}
d^{b}(\bar{\mu}_\cdot,\bar{\nu}_\cdot):=  \sup_{ t\in \mathcal{T}}  \bar{d}^{b}(\bar{\mu}_t,\bar{\nu}_t)
\end{equation*}
generates the usual uniform topology on $\mathcal{M}_\mathcal{T}^{b}$. Hence, from Lemma \ref{lem: Mb is complete} it follows that the space $(\mathcal{M}_\mathcal{T}^{b}, d_\alpha^{b})$ is complete as well.

\subsection{The extended graphop} \label{sec: the extended graphop}  

Associated with the graphop $A$ we can define an operator $\mathcal{A}$ on a 
family   $\{\mu^y\}_{y \in \Omega} \in \bar{\mathcal{M}}^b$ via
\begin{equation}\label{eq: extention of the measn field op on measures}
(\mathcal{A} \mu)^x := \int_{\Omega} \mu^y ~\txtd\nu_x^A(y).
\end{equation}
Here, the integral in the right side is to be understood in the following sense
\begin{equation*}
(\mathcal{A} \mu)^x (S) =\int_\Omega \mu^y(S) ~\txtd\nu_x^A(y) \text{\quad for any Borel set } S\subset\mathbb{T} \text{ and } x\in \Omega.
\end{equation*}
Note especially that for the given fixed family $\{\nu_x^A\}_{x \in \Omega}$,  $\{ (\mathcal{A} \mu)^x \}_{x \in \Omega}$, is a family of finite measures with
\begin{equation} \label{eq: Am leq b}
(\mathcal{A} \mu)^x (\mathbb{T}) \leq  \nu_x^A(\Omega)\sup_{y\in \Omega}\mu^y(\mathbb{T}) \leq b\gamma_A \leq b.
\end{equation}
Note that the operator $\mathcal{A}$ depends directly on fiber measures $\{\nu_x^A\}_{x\in \Omega}$. We will very often make use of the following lemma:

\begin{lem}\textbf{ ($\mathcal{A}$ and integration)}\label{lem: comput of dAmu}\\
	For any nonnegative Borel measurable function $f:\mathbb{T} \to \mathbb{R}_{\geq 0}$ and any family $\{\mu^y\}_{y \in \Omega} \in \bar{\mathcal{M}}^b$ we have
	\begin{align*}
	\int_{\mathbb{T}} f(v) ~\txtd(\mathcal{A}\mu^x)(v) &=\int_{\Omega}\int_\mathbb{T} f(v) ~\txtd\mu^y(v)~\txtd\nu_x^A(y) \\
	&= A\Big(\int_\mathbb{T} f(v) ~\txtd\mu^{\cdot}(v)\Big)(x)
	\end{align*}
	We also write in short notation
	\begin{equation*}
	\txtd(\mathcal{A}\mu^x)(v) =  ~\txtd\mu^y(v)~\txtd\nu_x^A(y).
	\end{equation*}
\end{lem}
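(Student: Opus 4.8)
The identity says that integrating a nonnegative function against the \emph{mixture measure} $\mathcal{A}\mu^x = \int_\Omega \mu^y\,\txtd\nu_x^A(y)$ coincides with the mixture of the integrals, and with the last expression recognized as the action of $A$ on the function $y\mapsto \int_{\mathbb{T}}f\,\txtd\mu^y$. The plan is the standard measure-theoretic bootstrap: prove the identity first for indicator functions, then for nonnegative simple functions by linearity, and finally for general nonnegative measurable $f$ by monotone convergence.

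I would first treat $f=\chi_S$ with $S\in\mathcal{B}(\mathbb{T})$. By the definition of $\mathcal{A}$ in \eqref{eq: extention of the measn field op on measures},
\[
\int_{\mathbb{T}}\chi_S(v)\,\txtd(\mathcal{A}\mu^x)(v) = (\mathcal{A}\mu^x)(S) = \int_\Omega\mu^y(S)\,\txtd\nu_x^A(y) = \int_\Omega\int_{\mathbb{T}}\chi_S(v)\,\txtd\mu^y(v)\,\txtd\nu_x^A(y),
\]
which is the first equality. For the second equality, note that $y\mapsto\mu^y(S)$ is measurable (since $\bar\mu\in\bar{\mathcal{M}}^b$ is a measurable family) and bounded by $b$, hence lies in $L^\infty(\Omega,m)$, so Theorem~\ref{thm: measure repres of graphops} identifies $\int_\Omega\mu^y(S)\,\txtd\nu_x^A(y)$ with $A\big(\mu^{\cdot}(S)\big)(x)$ for $m$-a.e.\ $x$. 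The same computation also shows, en route, that $x\mapsto(\mathcal{A}\mu^x)(S)$ is measurable and, by monotone convergence, countably additive in $S$, so that each $\mathcal{A}\mu^x$ is a genuine finite Borel measure on $\mathbb{T}$.

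By linearity of all three expressions in $f$ — linearity of the two integrals and of the operator $A$ — the identity passes to nonnegative simple functions $f=\sum_{i=1}^n c_i\chi_{S_i}$ with $c_i\ge 0$. For a general nonnegative Borel measurable $f:\mathbb{T}\to\mathbb{R}_{\ge 0}$ I would pick simple functions $0\le f_n\uparrow f$ pointwise and apply the monotone convergence theorem three times: to $\txtd(\mathcal{A}\mu^x)$ on the left; to $\txtd\mu^y$ for each fixed $y$ and then to $\txtd\nu_x^A(y)$ in the middle term, using that $y\mapsto\int_{\mathbb{T}}f_n\,\txtd\mu^y$ is increasing; and, on the right, to the fiber-measure representation $A(g)(x)=\int_\Omega g(y)\,\txtd\nu_x^A(y)$ applied to $g_n:=\int_{\mathbb{T}}f_n\,\txtd\mu^{\cdot}$, which increases to $g:=\int_{\mathbb{T}}f\,\txtd\mu^{\cdot}$ — where one extends this representation to nonnegative, possibly unbounded, measurable $g$ by truncating $g$ at level $k$, applying Theorem~\ref{thm: measure repres of graphops} to $\min(g,k)\in L^\infty(\Omega,m)$, and letting $k\to\infty$. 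Letting $n\to\infty$ in the simple-function identity yields the lemma, with all quantities allowed to take the value $+\infty$ in case $f$ is unbounded.

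The computation is essentially routine, and I do not expect a real obstacle; the only two places deserving the modest care indicated are (a) the measurability in $y$ of $\mu^y(S)$ and of $y\mapsto\int_{\mathbb{T}}f\,\txtd\mu^y$, which follows from the standing measurability hypothesis on $\bar\mu$ together with a monotone-class argument, and (b) assigning a meaning to $Ag$ for unbounded nonnegative measurable $g$ through the fiber-measure integral, so that the last equality of the lemma is well posed; both are dealt with as above.
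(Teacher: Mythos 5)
Your proposal is correct and follows essentially the same route as the paper's own proof: verify the identity for indicator functions directly from the definition of $\mathcal{A}$ and the fiber-measure representation of $A$, extend by linearity to simple functions, and pass to general nonnegative measurable $f$ by monotone approximation. The extra care you take with measurability of $y\mapsto\mu^y(S)$ and with the meaning of $Ag$ for unbounded $g$ is sound but goes beyond what the paper records.
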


\begin{proof}
	For the special case that $f =\chi_S$, where $S\subset{T}$ is Borel set, is a characteristic function we calculate immediately
	\begin{align*}
	\int_{\mathbb{T}} f(v) d(\mathcal{A}\mu^x)(v) &= \int_{S}  d(\mathcal{A}\mu^x)(v) = \Big(\int_\Omega \mu^y ~\txtd\nu_x^A(y) \Big)(S) \\
	&= \int_\Omega \mu^y(S) ~\txtd\nu_x^A(y) = \int_\Omega \int_{\mathbb{T}} f(v) ~\txtd\mu^y(v) ~\txtd\nu_x^A(y) \\
	&= A \Big(\int_\mathbb{T} f(v) ~\txtd\mu^{\cdot}(v) \Big)(x).
	\end{align*}
	In the same way, using linearity, we can verify the claim in the case that $f= \sum_{k=1}^n \alpha_k \chi_{S_k}$ is a simple function. For a general $f$ we approximate it by simple functions.	
\end{proof}

\begin{rem} \textbf{($\mathcal{A}$ is the canonical extention of the graphop $A$)} \\
	Using the previous lemma and Fubini's theorem, it is easy to check that if $\bar{\mu} \in \bar{\mathcal{M}}^b$ is absolutely continuous with respect to the Lebesque measure  with Radon-Nikodym derivative $\rho$, i.e., $~\txtd\mu^x(u) = \rho(u,x) ~\txtd u$, then $\mathcal{A} \mu^x$ is also absolutely continuous with $\txtd(\mathcal{A}\mu)^x(u) = (A\rho(u,\cdot))(x) ~\txtd u$. This implies that the definition of $\mathcal{A}$ we have provided is the correct extension of the graphop $A$ to measures to work with. 	
\end{rem}

\begin{lem}
	\label{lem: estim dx bounds dWA}
	For any $\bar{\mu}, \bar{\kappa} \in \bar{\mathcal{M}}^b$ and $x\in \Omega$ we have 
	\begin{equation*}
	d_{BL}(\mathcal{A}\mu^x, \mathcal{A}\kappa^x) \leq  \bar{d}^{b,A,x}(\bar{\mu},\bar{\kappa}) 	
	\leq \bar{d}^{b,A}(\bar{\mu},\bar{\kappa})  \leq \bar{d}^{b}(\bar{\mu},\bar{\kappa}) 
	\end{equation*}
\end{lem}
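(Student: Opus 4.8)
The plan is to read the chain of three inequalities from right to left as a sequence of progressively finer suprema, and to reduce the only substantive estimate — the first one — to Lemma~\ref{lem: comput of dAmu} combined with the triangle inequality for integrals.

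First I would dispatch the two rightmost inequalities, which are purely formal. Since $\gamma_A \leq 1$ by the standing assumption~\eqref{eq: cond: gamma_A leq 1} and $m$ is a probability measure, we have $A \in \mathcal{G}^{m} \subseteq \mathcal{G}$; hence $\bar{d}^{b,A}(\bar{\mu},\bar{\kappa})$ is one of the terms over which the supremum defining $\bar{d}^{b}$ is taken, so $\bar{d}^{b,A}(\bar{\mu},\bar{\kappa}) \leq \bar{d}^{b}(\bar{\mu},\bar{\kappa})$. Likewise $\bar{d}^{b,A,x}(\bar{\mu},\bar{\kappa}) \leq \sup_{x \in \Omega}\bar{d}^{b,A,x}(\bar{\mu},\bar{\kappa}) = \bar{d}^{b,A}(\bar{\mu},\bar{\kappa})$ straight from the definition of $\bar{d}^{b,A}$. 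Implicit here is that $y \mapsto d_{BL}(\mu^y,\kappa^y)$ is $\nu_x^A$-measurable, so that $\bar{d}^{b,A,x}$ is well defined; this follows from the measurability of $\bar{\mu},\bar{\kappa}$ as maps $\Omega \to \mathcal{M}_f(\mathbb{T})$ together with the fact that $d_{BL}$ can be realized as a supremum over a fixed countable, $d_{BL}$-dense family of test functions in $\mathcal{L}$.

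The heart of the matter is the first inequality $d_{BL}(\mathcal{A}\mu^x,\mathcal{A}\kappa^x) \leq \bar{d}^{b,A,x}(\bar{\mu},\bar{\kappa})$. Fix $x \in \Omega$ and $f \in \mathcal{L}$. Since $f$ is continuous with values in $[0,1]$, it is in particular nonnegative and Borel, so Lemma~\ref{lem: comput of dAmu} applies to each of the families $\{\mu^y\}_{y\in\Omega},\{\kappa^y\}_{y\in\Omega} \in \bar{\mathcal{M}}^b$ and yields
\[
\int_{\mathbb{T}} f \, \txtd(\mathcal{A}\mu^x) - \int_{\mathbb{T}} f \, \txtd(\mathcal{A}\kappa^x) = \int_\Omega \Big( \int_{\mathbb{T}} f \, \txtd\mu^y - \int_{\mathbb{T}} f \, \txtd\kappa^y \Big) \txtd\nu_x^A(y).
\]
Taking absolute values, moving them inside the $\nu_x^A$-integral, and noting that $f \in \mathcal{L}$ is itself an admissible test function in the definition of $d_{BL}(\mu^y,\kappa^y)$, the right-hand side is bounded by $\int_\Omega d_{BL}(\mu^y,\kappa^y)\,\txtd\nu_x^A(y) = \bar{d}^{b,A,x}(\bar{\mu},\bar{\kappa})$. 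As this bound is uniform in $f \in \mathcal{L}$, taking the supremum over $f$ gives $d_{BL}(\mathcal{A}\mu^x,\mathcal{A}\kappa^x) \leq \bar{d}^{b,A,x}(\bar{\mu},\bar{\kappa})$, which closes the chain.

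I do not expect a genuine obstacle here: morally the statement just says ``the graphop action on measures is an average, and averaging is $1$-Lipschitz for $d_{BL}$.'' The only points that warrant a word of care are the measurability issues flagged above (well-definedness of the integrals and of the supremum over $\mathcal{L}$), and invoking Lemma~\ref{lem: comput of dAmu} for honest nonnegative integrands applied to $\mu^x$ and $\kappa^x$ separately — and only then subtracting — rather than attempting to apply it to the signed object $\mu^x - \kappa^x$ directly.
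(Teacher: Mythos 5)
Your proposal is correct and follows essentially the same route as the paper: both reduce the first inequality to Lemma \ref{lem: comput of dAmu} plus moving the absolute value and the supremum over $f\in\mathcal{L}$ inside the $\nu_x^A$-integral, and both treat the remaining two inequalities as immediate from the definitions of $\bar{d}^{b,A}$ and $\bar{d}^{b}$ (using $A\in\mathcal{G}^m\subseteq\mathcal{G}$). Your extra care in applying the lemma to $\mu$ and $\kappa$ separately before subtracting, and in noting the measurability of $y\mapsto d_{BL}(\mu^y,\kappa^y)$, only makes explicit what the paper leaves implicit.
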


\begin{proof}
	This follows immediately from Lemma \ref{lem: comput of dAmu} and the definition of $d_{BL}$, since 
	\begin{align*}
	d_{BL}(\mathcal{A}\mu^x, \mathcal{A}\kappa^x) &=
	\sup_{f\in \mathcal{L}}\Big|\int_\Omega \int_\mathbb{T} f(v) ~\txtd(\mu^y- \kappa^y)(v) ~\txtd\nu_x^A(y)\Big| \\&\leq
	\int_\Omega \underbrace{ \sup_{f\in \mathcal{L}}\Big|\int_\mathbb{T} f(v) ~\txtd(\mu^y- \kappa^y)(v)\Big|}_{= d_{BL}(\mu^y,\kappa^y)} ~\txtd\nu_x^A(y) \\
	&\leq 	\int_\Omega  d_{BL}(\mu^y,\kappa^y)~\txtd\nu_x^A(y) = \bar{d}^{b,A,x}(\bar{\mu},\bar{\kappa})\\
	&\leq \sup_{x \in \Omega} \Big(\int_\Omega d_{BL}(\mu^y,\kappa^y)~\txtd\nu_x^A(y)\Big) = \bar{d}^{b,A}(\bar{\mu},\bar{\kappa})\\ 
	&\leq\bar{d}^{b}(\bar{\mu},\bar{\kappa}).
	\end{align*}
	This finishes the proof.
\end{proof}

\begin{lem} \textbf{(Lipschitz continuity of $\mathcal{A}$)} 
	\label{lem: Lips cont of A} \\
	The map $\mathcal{A} : \bar{\mathcal{M}}^b \to \bar{\mathcal{M}}^b$ is well-defined. Further, for every $\bar{\mu}, \bar{\kappa} \in \bar{\mathcal{M}}^b$ holds
	\begin{equation*}
	\bar{d}^{b}(\mathcal{A}\bar{\mu}, \mathcal{A}\bar{\kappa}) \leq 	\bar{d}^{b}(\bar{\mu}, \bar{\kappa}).
	\end{equation*}
\end{lem}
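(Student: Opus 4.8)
The plan is to verify the two claims separately, starting with well-definedness. To show $\mathcal{A}$ maps $\bar{\mathcal{M}}^b$ into itself, I need three things: that $(\mathcal{A}\mu)^x$ is a finite measure for each $x$, that the bound $\sup_{x\in\Omega}(\mathcal{A}\mu)^x(\mathbb{T})\leq b$ holds, and that $x\mapsto(\mathcal{A}\mu)^x$ is measurable as a map $\Omega\to\mathcal{M}_f(\mathbb{T})$. The first is clear from the definition (\ref{eq: extention of the measn field op on measures}) since it is an integral of a measure-valued map against the finite measure $\nu_x^A$; countable additivity follows from monotone convergence. The bound is exactly (\ref{eq: Am leq b}), using the standing assumption $\gamma_A\leq 1$. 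For measurability, I would use Lemma \ref{lem: comput of dAmu}: for each fixed $f\in C(\mathbb{T})$ the map $x\mapsto\int_\mathbb{T} f\,\txtd(\mathcal{A}\mu)^x = A\big(\int_\mathbb{T} f\,\txtd\mu^{\cdot}\big)(x)$ is measurable, being the image under the $P$-operator $A$ of the (bounded, measurable by hypothesis on $\bar\mu$) function $y\mapsto\int_\mathbb{T} f\,\txtd\mu^y$; since $\mathcal{M}_f(\mathbb{T})$ with the weak/bounded-Lipschitz topology is separable and metrized by $d_{BL}$, weak measurability against a countable determining family of continuous functions upgrades to Borel measurability of the $\mathcal{M}_f$-valued map.

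For the Lipschitz estimate, the work is already essentially done by the preceding lemmas. Fix $\bar\mu,\bar\kappa\in\bar{\mathcal{M}}^b$. The key point is that $\bar d^b$ is defined as a supremum over \emph{all} graphops $B\in\mathcal{G}$, not just over $A$, so I must bound $\bar d^{b,B}(\mathcal{A}\bar\mu,\mathcal{A}\bar\kappa)$ uniformly in $B$. Unwinding the definition,
\begin{equation*}
\bar d^{b,B}(\mathcal{A}\bar\mu,\mathcal{A}\bar\kappa)=\sup_{x\in\Omega}\int_\Omega d_{BL}\big((\mathcal{A}\mu)^y,(\mathcal{A}\kappa)^y\big)~\txtd\nu_x^B(y).
\end{equation*}
By Lemma \ref{lem: estim dx bounds dWA} applied with the fixed graphop $A$, for every $y\in\Omega$ we have $d_{BL}((\mathcal{A}\mu)^y,(\mathcal{A}\kappa)^y)\leq\bar d^{b,A,y}(\bar\mu,\bar\kappa)=\int_\Omega d_{BL}(\mu^z,\kappa^z)~\txtd\nu_y^A(z)$. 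Substituting this pointwise bound under the integral against $\nu_x^B$ gives
\begin{equation*}
\bar d^{b,B}(\mathcal{A}\bar\mu,\mathcal{A}\bar\kappa)\leq\sup_{x\in\Omega}\int_\Omega\Big(\int_\Omega d_{BL}(\mu^z,\kappa^z)~\txtd\nu_y^A(z)\Big)~\txtd\nu_x^B(y)=\sup_{x\in\Omega}\int_\Omega g(y)~\txtd\nu_x^B(y),
\end{equation*}
where $g(y):=\bar d^{b,A,y}(\bar\mu,\bar\kappa)$ is a bounded nonnegative measurable function with $\sup_y g(y)=\bar d^{b,A}(\bar\mu,\bar\kappa)\leq\bar d^b(\bar\mu,\bar\kappa)$. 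Since $\nu_x^B(\Omega)=\gamma_B\leq 1$ for $B\in\mathcal{G}$, the inner integral is at most $\sup_y g(y)\leq\bar d^b(\bar\mu,\bar\kappa)$, uniformly in $x$ and in $B$. Taking the supremum over $B\in\mathcal{G}$ yields $\bar d^b(\mathcal{A}\bar\mu,\mathcal{A}\bar\kappa)\leq\bar d^b(\bar\mu,\bar\kappa)$, as claimed.

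I expect the only genuinely delicate point to be the measurability part of well-definedness — specifically, justifying that $\mathcal{A}\mu$ is a \emph{measurable} $\mathcal{M}_f(\mathbb{T})$-valued map rather than merely that each $(\mathcal{A}\mu)^x$ is a measure. This requires care about which $\sigma$-algebra on $\mathcal{M}_f(\mathbb{T})$ one uses and the interchange of the $P$-operator $A$ with integration that Lemma \ref{lem: comput of dAmu} licenses; everything else is a routine application of the three prior lemmas together with $\gamma_A,\gamma_B\leq1$. One should also note that $g$ is measurable because Lemma \ref{lem: comput of dAmu} represents $\bar d^{b,A,\cdot}(\bar\mu,\bar\kappa)$, after replacing the supremum over $f\in\mathcal{L}$ by a supremum over a countable dense subset, as a countable supremum of functions of the form $A(\cdot)(y)$, hence measurable, so the integrals against $\nu_x^B$ make sense.
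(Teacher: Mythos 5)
Your proof is correct and follows essentially the same route as the paper: well-definedness from the bound $(\mathcal{A}\mu)^x(\mathbb{T})\leq b\gamma_A\leq b$, and the contraction estimate by applying Lemma \ref{lem: estim dx bounds dWA} pointwise, integrating against $\nu_x^B$, and using $\gamma_B\leq 1$ before taking suprema over $x$ and $B\in\mathcal{G}$. Your additional care with the measurability of the $\mathcal{M}_f(\mathbb{T})$-valued map $x\mapsto(\mathcal{A}\mu)^x$ is a detail the paper leaves implicit, but it does not change the argument.
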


\begin{proof}
	Due to equation (\ref{eq: Am leq b}) it is easy to see that  $\mathcal{A}$ maps  $\bar{\mathcal{M}}^b$ to  $\bar{\mathcal{M}}^b$.	For the second statement, note that by Lemma \ref{lem: estim dx bounds dWA} we know that 
	\begin{align*}
	d_{BL}(\mathcal{A}\mu^x, \mathcal{A}\kappa^x) 
	&\leq \bar{d}^{b}(\bar{\mu},\bar{\kappa}).
	\end{align*}
	Hence the claim follows by integrating over $\nu^B_y$ and taking the  supremum over all $y\in \Omega$ and $B \in \mathcal{G}$.
\end{proof}

\subsection{The extended characteristic field} \label{sec: the extended char field}

Via the map $\mathcal{A}$, we can now extend the mean field vector field $V$, defined in (\ref{eq: characteristic mean field V}), from densities to measures, by defining
\begin{equation}\label{eq: mean field op on measures}
V[\mathcal{A}, \mu, x] (t,u) := C \int_0^{2\pi} D(\tilde{u} - u) ~\txtd(\mathcal{A}\mu_t)^x(\tilde{u})
\end{equation}
for any $x\in \Omega,t\in \mathcal{T},u \in \mathbb{T}$ and $\mu \in \mathcal{M}^{b}_{\mathcal{T}}$. 

\begin{lem} \textbf{(Regularity of the characteristic field $V$)} 
	\label{lem: regularity of V} \\
	The following statements are true:\\	
	\textbf{(I)} $V$ satisfies a Lipschitz condition in $\mu$ in the sense that for all $x \in \Omega$, $u\in \mathbb{T}$, $t\in \mathcal{T}$ and any $\mu ,\kappa \in \mathcal{M}^{b}_\mathcal{T}$ we have
	\begin{equation*}
	|V[\mathcal{A}, \mu,x](t,u) -  V[\mathcal{A}, \kappa,x](t,u)| \leq 2Cd^{b,A,x}(\bar{\mu}_t,\bar{\kappa}_t )\leq 2C\bar{d}^{b}(\bar{\mu}_t,\bar{\kappa}_t)
	\end{equation*}
	\textbf{(II)} For any $\mu \in \mathcal{M}^{b}_\mathcal{T}$ and $x \in \Omega$, the map $V[\mathcal{A},\mu,x](\cdot)$ is continuous in $(t,u)$ and Lipschitz continuous in $u$  uniformly in $t$ with Lipschitz constant bounded by $b\gamma_A$.
\end{lem}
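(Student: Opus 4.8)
The plan is to prove the two regularity statements by direct estimation, using Lemma~\ref{lem: comput of dAmu} to rewrite integrals against $(\mathcal{A}\mu_t)^x$ and the standing assumptions (\ref{eq: Lipschitz condition for D}), (\ref{eq: bound. for D}) on $D$, together with the bound (\ref{eq: Am leq b}) on the total mass of $(\mathcal{A}\mu_t)^x$.

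For part \textbf{(I)}, I would start from the definition (\ref{eq: mean field op on measures}) and write
\begin{equation*}
V[\mathcal{A},\mu,x](t,u) - V[\mathcal{A},\kappa,x](t,u) = C\int_0^{2\pi} D(\tilde u - u)\,\txtd\big((\mathcal{A}\mu_t)^x - (\mathcal{A}\kappa_t)^x\big)(\tilde u).
\end{equation*}
The function $\tilde u \mapsto D(\tilde u - u)$ is Lipschitz with constant $\le 1$ by (\ref{eq: Lipschitz condition for D}) and bounded by $1$ by (\ref{eq: bound. for D}), so (after possibly shifting and rescaling so that it lands in $[0,1]$, which costs a factor $2$) it is an admissible test function, up to the factor $2$, for the metric $d_{BL}$ on $\mathcal{M}_f(\mathbb{T})$. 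Hence the absolute value of the above is bounded by $2C\, d_{BL}((\mathcal{A}\mu_t)^x, (\mathcal{A}\kappa_t)^x)$. Now Lemma~\ref{lem: estim dx bounds dWA} gives $d_{BL}((\mathcal{A}\mu_t)^x, (\mathcal{A}\kappa_t)^x) \le \bar d^{b,A,x}(\bar\mu_t,\bar\kappa_t) \le \bar d^{b}(\bar\mu_t,\bar\kappa_t)$, which yields exactly the claimed chain of inequalities.

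For part \textbf{(II)}, Lipschitz continuity in $u$: from (\ref{eq: mean field op on measures}),
\begin{equation*}
|V[\mathcal{A},\mu,x](t,u) - V[\mathcal{A},\mu,x](t,u')| \le C\int_0^{2\pi} |D(\tilde u - u) - D(\tilde u - u')|\,\txtd(\mathcal{A}\mu_t)^x(\tilde u) \le C|u-u'|\,(\mathcal{A}\mu_t)^x(\mathbb{T}),
\end{equation*}
using (\ref{eq: Lipschitz condition for D}), and then (\ref{eq: Am leq b}) bounds $(\mathcal{A}\mu_t)^x(\mathbb{T}) \le b\gamma_A$; since $C$ is fixed this gives the stated constant (up to the harmless factor $C$, which I expect the paper absorbs or the constant is really $Cb\gamma_A$). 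Continuity in $(t,u)$: continuity in $u$ is uniform by the Lipschitz bound just proved, so it suffices to check continuity in $t$. Since $\mu \in \mathcal{M}^b_\mathcal{T} = C(\mathcal{T},(\bar{\mathcal{M}}^b,\bar d^b))$, the map $t \mapsto \bar\mu_t$ is continuous in $\bar d^b$; by part \textbf{(I)} applied with $\kappa = \mu$ but at different times (i.e. the same Lipschitz estimate, now reading $|V[\mathcal{A},\mu,x](t,u) - V[\mathcal{A},\mu,x](s,u)| \le 2C\bar d^b(\bar\mu_t,\bar\mu_s)$), continuity of $t\mapsto\bar\mu_t$ transfers to continuity of $t\mapsto V[\mathcal{A},\mu,x](t,u)$, uniformly in $u$. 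Combining uniform-in-$t$ continuity in $u$ with continuity in $t$ gives joint continuity in $(t,u)$.

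The only mildly delicate point, and the one I would be most careful about, is the bookkeeping of constants when turning $\tilde u \mapsto D(\tilde u - u)$ into an admissible test function for $d_{BL}$: the class $\mathcal{L}$ requires values in $[0,1]$ and Lipschitz constant $\le 1$, whereas $D$ only satisfies $|D|\le 1$ and $\mathrm{Lip}(D)\le 1$, so one writes $D(\tilde u - u) = 2\big(\tfrac{1}{2}D(\tilde u-u) + \tfrac12\big) - 1$, notes the additive constant integrates to zero against the signed measure $(\mathcal{A}\mu_t)^x - (\mathcal{A}\kappa_t)^x$ of total mass zero, and the factor $2$ out front produces the $2C$ in the statement. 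Everything else is a routine application of the lemmas already established.
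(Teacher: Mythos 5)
Your overall route is the same as the paper's: rewrite the difference as an integral of $D(\tilde u - u)$ against the signed measure $(\mathcal{A}\mu_t)^x - (\mathcal{A}\kappa_t)^x$, reduce to admissible test functions for $d_{BL}$ at the cost of a factor $2$, and then invoke Lemma \ref{lem: estim dx bounds dWA}; part \textbf{(II)} is handled identically in both arguments (and you are right that the paper's stated Lipschitz constant $b\gamma_A$ silently drops a factor $C$).

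However, the one step you yourself flagged as delicate is the one that fails. You write $D(\tilde u - u) = 2\bigl(\tfrac12 D(\tilde u - u) + \tfrac12\bigr) - 1$ and discard the additive constant on the grounds that the signed measure $(\mathcal{A}\mu_t)^x - (\mathcal{A}\kappa_t)^x$ has total mass zero. It need not: elements of $\bar{\mathcal{M}}^b$ are only required to satisfy $\sup_x \mu^x(\mathbb{T}) \le b$, so $\bar\mu_t$ and $\bar\kappa_t$ can have different total masses, and then so do $(\mathcal{A}\mu_t)^x$ and $(\mathcal{A}\kappa_t)^x$. Keeping the constant term (which is admissible for $d_{BL}$ since $\chi_{\mathbb{T}} \in \mathcal{L}$) your decomposition only yields the bound $3C\,d_{BL}\bigl((\mathcal{A}\mu_t)^x,(\mathcal{A}\kappa_t)^x\bigr)$, not the $2C$ claimed in the statement. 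The paper avoids this by splitting $D(\cdot - u)$ into its positive and negative parts, $f^u_+ = \max(D(\cdot - u),0)$ and $f^u_- = \max(-D(\cdot - u),0)$; both lie in $\mathcal{L}$ (values in $[0,1]$ by (\ref{eq: bound. for D}), Lipschitz constant $\le 1$ by (\ref{eq: Lipschitz condition for D})), and $D = f^u_+ - f^u_-$ gives the factor $2$ with no cancellation of masses needed. A constant $3C$ would still be finite and would not break the downstream fixed-point argument, but as a proof of the lemma as stated your normalization trick needs to be replaced by the positive/negative-part decomposition.
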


\begin{proof} \textbf{(I)} We compute that 
	\begin{align*}
	|V[\mathcal{A}, \mu,x](t,u) -  V[\mathcal{A}, \kappa,x](t,u)| &=   C\Big| \int_0^{2\pi} D(\tilde{u} - u) ~\txtd(\mathcal{A}(\mu_t)^x -\mathcal{A}(\kappa_t))^x(\tilde{u}) \Big|\\
	&\leq  C \Big( \Big|\int_\mathbb{T} \chi_{I_{\geq 0}(u)}D(\tilde{u}-u) ~\txtd(\mathcal{A}(\mu_t)^x - \mathcal{A}(\kappa_t)^x)(\tilde{u}) \Big| \\&\quad \quad  + \Big|\int_\mathbb{T} -\chi_{I_{\leq 0}(u)} D(\tilde{u} - u) ~\txtd(\mathcal{A}(\mu_t)^x - \mathcal{A}(\kappa_t)^x)(\tilde{u}) \Big|  \Big),
	\end{align*}
	where $I_{\geq 0}(u) \subset \mathbb{T}$ is the interval where $D(\tilde{u} - u)$, as a function of $\tilde{u}$, is positive and $I_{\leq 0}(u)$ is the complementary set. Note that the functions $f^u_{+}(\tilde{u}):= \chi_{I_{\geq 0}(u)}D(\tilde{u} - u)$ and  $f^u_{-}(\tilde{u}):= -\chi_{I_{\leq 0}(u)}D(\tilde{u} - u)$ both lie in the set $\mathcal{L}$, so that we obtain, continuing the previous calculation and using Lemma~\ref{lem: estim dx bounds dWA}
	\begin{align*}
	|V[\mathcal{A}, \mu,x](t,u) -  V[\mathcal{A}, \kappa,x](t,u)| &\leq 
	2C 	d_{BL}(\mathcal{A}\mu^x_t, \mathcal{A}\kappa^x_t) \leq 2C \bar{d}^{b,A,x}(\bar{\mu}_t,\bar{\kappa}_t) \leq 2C \bar{d}^{b}(\bar{\mu}_t,\bar{\kappa}_t).
	\end{align*}
	\textbf{(II)} The proof proceeds in three steps. First, we are going to show Lipschitz continuity in $u$, then continuity in $t$ and finally continuity in $(t,u)$. We start with Lipschitz continuity in $u$. For any $u$ and $u_0 \in \mathbb{T}$ we have
	\begin{align*}
	|V[\mathcal{A},\mu,x](t,u) - V[\mathcal{A},\mu,x](t,u_0)| &= C\int_0^{2\pi} \Big|D(\tilde{u} - u) - D(\tilde{u} - u_0)\Big| ~\txtd(\mathcal{A}\mu_t)^x(\tilde{u}) \\
	&\leq  \underbrace{(\mathcal{A}\mu_t)^x(\mathbb{T})}_{\leq \nu_x^A(\Omega) \sup_{y \in \mathbb{T}  } \mu_y(\mathbb{T})\leq b\gamma_A} | u - u_0|. 
	\end{align*} 
	Next, we consider	continuity in $t$. For any $u \in \mathbb{T}$ and $t_0 \in \mathcal{T}$ we have for $t \to t_0$, using Lemma \ref{lem: estim dx bounds dWA} and the fact that $\mu \in \mathcal{M}^{b}_\mathcal{T}$ and with a similar calculation as in \textbf{(I)}:
	\begin{align*}
	|V[\mathcal{A},\mu,x](t,u) - V[\mathcal{A},\mu,x](t_0,u)| &\leq 2C d_{BL}(\mathcal{A}(\mu_t)^x,\mathcal{A}(\mu_{t_0})^x) \\
	&\leq 2C \bar{d}^{b,A,x}(\bar{\mu}_t,\bar{\mu}_{t_0}) \to 0  \text{\quad as } t \to t_0.
	\end{align*} 
	It remains to show continuity in $(t,u)$. For $(u,t), (u_0,t_0) \in \mathbb{T}\times \mathcal{T}$ we have 
	\begin{align*}
	V[\mathcal{A},\mu,x](t,u) - V[\mathcal{A},\mu,x](t_0,u_0)| &\leq 	V[\mathcal{A},\mu,x](t,u) - V[\mathcal{A},\mu,x](t,u_0)| \\
	&+ 	V[\mathcal{A},\mu,x](t,u_0) - V[\mathcal{A},\mu,x](t_0,u_0)|.
	\end{align*}	
	The second difference goes to 0 as $t \to t_0$ due to continuity in $t$. For the first difference we have
	\begin{align*}
	V[\mathcal{A},\mu,x](t,u) - V[\mathcal{A},\mu,x](t,u_0)| &= C| \int_0^{2\pi} \Big(D( \tilde{u} - u) - D(\tilde{u} - u_0) \Big)~\txtd(\mathcal{A}\mu_t)^x(\tilde{u}) | \\
	&\leq C | u - u_0| \nu_x^A(\Omega)\sup_{y \in \mathbb{T}} \mu_y(\mathbb{T}) \to 0 \text{\quad as } (t,u) \to (t_0,u_0).
	\end{align*}
	The claim follows.	
\end{proof}

\subsection{The equation of characteristics and the fixed point equation}
\label{sec: the fixed point eq}

For an arbitary $\mu \in \mathcal{M}^{b}_{\mathcal{T}}$ and for any $x\in \Omega$ we define following \textit{equation of characteristics} for a point $P= u \in \hat{G}:= \mathbb{T}$:
\begin{subequations}\label{eq: characteristic equation for the mean field 2}
	\begin{alignat}{4}
	\frac{dP}{dt} &= V[\mathcal{A},\mu,x](t,P), \\
	P(t_0) &= P_0.
	\end{alignat}	
\end{subequations}
Note that, due to Lemma \ref{lem: regularity of V}  we have that  equation (\ref{eq: characteristic equation for the mean field 2}) generates the flow 
\begin{equation}
T_{t,t_0}[\mathcal{A},\mu,x]: \hat{G} \to \hat{G},  \text{\quad} P_0 \mapsto P(t), P(t) \text{ solves (\ref{eq: characteristic equation for the mean field 2}) }.
\end{equation}
Note further that, if $D$ is smooth, then the regularity of $V[\mathcal{A},\mu,x](\cdot)$ in $u$, implies that $T_{t,t_0}$ is a $C^\infty$ diffeomorphism, satisfying $T_{t,t_0}^{-1} =T_{t_0,t}$.

\begin{defn}
	We say that a measure $\kappa \in \mathcal{M}^{b}_{\mathcal{T}}$ satisfies the \textit{fixed point equation asociated with the VFPE (\ref{eq: IVP for the VFPE})} with initial condition $\bar{\mu}_0 \in \bar{\mathcal{M}}^{b}$, if $\kappa$ satisfies 
	\begin{equation}\label{eq: fixed point eq assoc with the VFPE}
	\kappa_t^y = \mu_0^y \circ T_{0,t}[\mathcal{A},\kappa,y],  \text{ \quad for all $y \in \Omega$ }.
	\end{equation}
\end{defn}

\begin{lem} \textbf{(Properties of the characteristic flow)} \label{lem: V, T are uniformly bounded}\\
	The following statements are true: \\
	\textbf{(i)} $V[\mathcal{A},\cdot](\cdot)$ is uniformly bounded (in $x\in \Omega,t\in \mathcal{T},u \in \mathbb{T}$ and $\mu \in \mathcal{M}^{b}_{\mathcal{T}}$). \\
	\textbf{(ii)} The corresponding flow $T^x_{t,t_0}[\mu]u$ is uniformly bounded (in $x\in \Omega,t,t_0\in \mathcal{T},u \in \mathbb{T}$ and $\mu \in \mathcal{M}^{b}_{\mathcal{T}}$). \\
	\textbf{(iii)} $T^x_{t,t_0}[\mu]$ is Lipschitz continuous with Lipschitz constant $\txte^{Tb\gamma_A}$. 
\end{lem}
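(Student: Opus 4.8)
The plan is to handle the three assertions in order, each reducing to an elementary estimate for the characteristic ODE \eqref{eq: characteristic equation for the mean field 2} combined with the bounds on $V$ from Lemma~\ref{lem: regularity of V}. For \textbf{(i)}, I would start from the definition $V[\mathcal{A},\mu,x](t,u)=C\int_0^{2\pi}D(\tilde u-u)\,\txtd(\mathcal{A}\mu_t)^x(\tilde u)$, apply the uniform bound $\max_{u\in\mathbb{T}}|D(u)|\le 1$ from \eqref{eq: bound. for D}, and use the total-mass estimate $(\mathcal{A}\mu_t)^x(\mathbb{T})\le \nu_x^A(\Omega)\sup_y\mu^y_t(\mathbb{T})\le b\gamma_A$ recorded in \eqref{eq: Am leq b}. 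This gives $|V[\mathcal{A},\mu,x](t,u)|\le Cb\gamma_A\le Cb$, a bound independent of $x,t,u,\mu$. For \textbf{(ii)}, fix $x$ and $\mu$, note that $V$ is continuous and Lipschitz in $u$ by Lemma~\ref{lem: regularity of V}(II), so by Picard--Lindel\"of the flow $T^x_{t,t_0}[\mu]$ is well defined; then integrating \eqref{eq: characteristic equation for the mean field 2} and using \textbf{(i)} in the standard lift to $\mathbb{R}$ yields $|T^x_{t,t_0}[\mu]u-u|\le Cb\gamma_A\,|t-t_0|\le Cb\gamma_A\,T$ for all $t,t_0\in\mathcal{T}$, which (together with the triviality that on the compact group $\mathbb{T}$ itself boundedness is automatic) gives the uniform bound.

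For \textbf{(iii)}, let $P(\cdot)$ and $Q(\cdot)$ solve \eqref{eq: characteristic equation for the mean field 2} with $P(t_0)=u_0$ and $Q(t_0)=v_0$. Since Lemma~\ref{lem: regularity of V}(II) gives that $u\mapsto V[\mathcal{A},\mu,x](t,u)$ is Lipschitz, uniformly in $t$, with constant $b\gamma_A$, for $t\ge t_0$ one has
\[
|P(t)-Q(t)|\le |u_0-v_0| + b\gamma_A\int_{t_0}^{t}|P(s)-Q(s)|\,\txtd s,
\]
so Gr\"onwall's inequality yields $|P(t)-Q(t)|\le |u_0-v_0|\,\txte^{b\gamma_A(t-t_0)}\le |u_0-v_0|\,\txte^{Tb\gamma_A}$; running time backwards for $t\le t_0$ gives the same estimate with $|t-t_0|$ in the exponent. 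Hence $T^x_{t,t_0}[\mu]$ is Lipschitz with constant $\txte^{Tb\gamma_A}$, uniformly in $x,t,t_0,\mu$.

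I do not expect a genuine obstacle here: the lemma is essentially a packaging of Picard--Lindel\"of and Gr\"onwall applied to the $V$-bounds of Lemma~\ref{lem: regularity of V}. The only points requiring a word of care are that the torus geometry is compatible with the lifting argument (the circle metric on $\mathbb{T}$ agrees locally with the Euclidean one and the flow depends continuously on the data, so the Lipschitz estimate descends), and that both time directions $t\gtrless t_0$ are covered, which Gr\"onwall handles symmetrically; in particular no smoothness of $D$ is needed for \textbf{(ii)} and \textbf{(iii)}, only the Lipschitz property \eqref{eq: Lipschitz condition for D}.
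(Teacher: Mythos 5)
Your proposal is correct and follows essentially the same route as the paper: part \textbf{(i)} is the bound $|D|\le\|D\|_\infty$ combined with $(\mathcal{A}\mu_t)^x(\mathbb{T})\le b\gamma_A$ from \eqref{eq: Am leq b}, part \textbf{(ii)} integrates the characteristic equation and uses \textbf{(i)}, and part \textbf{(iii)} is Gr\"onwall applied with the Lipschitz constant $b\gamma_A$ of $V$ in $u$ from Lemma~\ref{lem: regularity of V}\textbf{(II)}. Your extra remarks on the torus lift and on covering both time directions only make explicit what the paper leaves implicit.
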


\begin{proof} 
	\textbf{(i)} We calculate using equation (\ref{eq: Am leq b})
	\begin{align*}
	\Big|V[\mathcal{A},\mu, x](t,u)\Big| &\leq 
	C\int_0^{2\pi}  \underbrace{\Big|D(\tilde{u} - u)\Big|}_{\leq \parallel D \parallel_\infty}  ~\txtd\mathcal{A}\mu^x_t(\tilde{u})\\
	&\leq C  \parallel D \parallel_\infty\mathcal{A}\mu^x_t (\mathbb{T}) \\
	&\leq C  \parallel D \parallel_\infty b\gamma_A.
	\end{align*}
	\textbf{(ii)} This follows from \textbf{(i)}, the fact that 
	\begin{equation*}
	T_{t,t_0}^xu = u + \int_{t_0}^t V[\mathcal{A},\mu,x](s,T_{s,t_0}^xu) ~\txtd s
	\end{equation*}
	and the compactness of $\Omega\times\mathcal{T}$. \\
	\textbf{(iii)} For any fixed $t_0 \in \mathcal{T}$, we define 
	\begin{equation*}
	\lambda(t):= \Big|T_{t,t_0}^xu - T^x_{t,t_0}w\Big|.
	\end{equation*}
	Using Lemma  \ref{lem: regularity of V} and the calculation in \textbf{(ii)} we get
	\begin{equation*}
	\lambda(t) \leq |u-w| + \mathcal{T} b \gamma_A \int_0^t \lambda(s) ~\txtd s.
	\end{equation*}
	Applying Gronwall's Lemma (cf. Lemma \ref{lem: Gronwalls lemma}) the claim follows.
	
\end{proof}

\begin{thm} \textbf{(Existence and Uniqueness of solutions for the fixed point equation)} 
	\label{thm: existence and uniqness of solutions for the fixed point eq} \\
	Assume $(\Omega,\Sigma,m)$ is a compact Borel probability space and that $A\in \mathcal{G}^m$ is a fixed graphop with corresponding measure $\nu^A$ on $\Omega^2$ in the sense of Theorem  \ref{thm: measure repres of graphops} and  family of fiber measures $\{\nu_x^A\}_{x \in \Omega}$. Then	following statements are true: \\
	\textbf{	(I)} For any initial condition $\bar{\mu}_0 \in \bar{\mathcal{M}}^{b}$,
	the map $\mathcal{F}:\mathcal{M}^{b}_{\mathcal{T}} \to \mathcal{M}^{b}_{\mathcal{T}}$ given by 
	\begin{equation}\label{eq: map F in thm exist of solut}
	\mathcal{F}\kappa^y_t := \mu_0^y \circ T_{0,t}[\mathcal{A},\kappa,y],  \quad \forall y\in \Omega,
	\end{equation}
	is well-defined and a contraction on $(\mathcal{M}^{b}_{\mathcal{T}},d_\alpha^{b})$ for any $\alpha > 2Cb + b\gamma_A$. \\
	\textbf{(II)} For any initial condition $\bar{\mu}_0 \in \bar{\mathcal{M}}^{b}$ there is a uique fixed point $\kappa \in \mathcal{M}^{b}_{\mathcal{T}}$ of the map $\mathcal{F}$, i.e., there is a unique $\kappa \in \mathcal{M}^{b}_{\mathcal{T}}$ satisfying
	\begin{equation*}
	\mathcal{F} \kappa = \kappa.
	\end{equation*}
	Furthermore, for any startpoint $\kappa^0 \in \mathcal{M}^{b}_{\mathcal{T}}$ the fixed point iteration given by
	\begin{equation*}
	\kappa^{n+1} := \mathcal{F}\kappa^n
	\end{equation*}
	converges to $\kappa$. 
\end{thm}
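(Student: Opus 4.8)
The plan is to read the map $\mathcal{F}$ in \eqref{eq: map F in thm exist of solut} as the Picard-type operator underlying Neunzert's fixed point argument and to verify the hypotheses of the Banach fixed point theorem on $(\mathcal{M}^{b}_{\mathcal{T}}, d_\alpha^{b})$, which is complete by Lemma \ref{lem: Mb is complete}. The work splits into (a) checking that $\mathcal{F}$ is a well-defined self-map of $\mathcal{M}^{b}_{\mathcal{T}}$ and (b) checking that it is a strict contraction as soon as $\alpha > 2Cb + b\gamma_A$; statement (II) is then immediate from Banach's theorem, and since all the $d_\alpha^{b}$ induce the topology of $d^{b}$ the fixed point and the convergence of the iterates do not depend on the admissible $\alpha$.

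For (a), fix $\kappa \in \mathcal{M}^{b}_{\mathcal{T}}$ and $y \in \Omega$. By Lemma \ref{lem: regularity of V} the field $V[\mathcal{A},\kappa,y](t,u)$ is continuous in $(t,u)$ and Lipschitz in $u$ uniformly in $t$, so the characteristic equation \eqref{eq: characteristic equation for the mean field 2} has a unique solution for all $t$ on the compact space $\hat{G}=\mathbb{T}$ and generates a flow $T^{\kappa,y}_{t,t_0}\colon\mathbb{T}\to\mathbb{T}$ which, by Lemma \ref{lem: V, T are uniformly bounded}, is a bi-Lipschitz bijection with inverse $T^{\kappa,y}_{t_0,t}$. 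Hence $\mathcal{F}\kappa^y_t = \mu_0^y\circ T_{0,t}[\mathcal{A},\kappa,y] = (T^{\kappa,y}_{t,0})_*\mu_0^y$ is a finite Borel measure on $\mathbb{T}$ with total mass $\mathcal{F}\kappa^y_t(\mathbb{T}) = \mu_0^y(\mathbb{T})\leq b$, so $\sup_y \mathcal{F}\kappa^y_t(\mathbb{T})\leq b$. For the measurability of $y\mapsto\mathcal{F}\kappa^y_t$ it suffices, the masses being uniformly bounded, to check that $y\mapsto \int_{\mathbb{T}} f\,\txtd\mathcal{F}\kappa^y_t = \int_{\mathbb{T}} f(T^{\kappa,y}_{t,0}P_0)\,\txtd\mu_0^y(P_0)$ is measurable for every $f\in C(\mathbb{T})$; this follows once $y\mapsto V[\mathcal{A},\kappa,y](t,u)$ is measurable, which holds because, splitting $D$ into positive and negative parts, $V[\mathcal{A},\kappa,y](t,u) = C\,A\big(z\mapsto \int_{\mathbb{T}} D(\tilde u - u)\,\txtd\kappa_t^z(\tilde u)\big)(y)$ by Lemma \ref{lem: comput of dAmu} and $A$ returns (classes of) measurable functions, combined with the Carath\'{e}odory theorem on measurable parameter-dependence of ODE solutions and the measurability of $y\mapsto\mu_0^y$. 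Finally, $t\mapsto\mathcal{F}\kappa_t$ is even $\bar{d}^{b}$-Lipschitz: for $f\in\mathcal{L}$ and uniformly in $y,P_0$, $|f(T^{\kappa,y}_{t,0}P_0)-f(T^{\kappa,y}_{t_0,0}P_0)|\leq |T^{\kappa,y}_{t,0}P_0 - T^{\kappa,y}_{t_0,0}P_0|\leq Cb\gamma_A|t-t_0|$ by Lemma \ref{lem: V, T are uniformly bounded}(i), so $\bar{d}^{b}(\mathcal{F}\kappa_t,\mathcal{F}\kappa_{t_0})\leq Cb\gamma_A|t-t_0|$ and $\mathcal{F}\kappa\in C(\mathcal{T},(\bar{\mathcal{M}}^b,\bar{d}^{b})) = \mathcal{M}^{b}_{\mathcal{T}}$.

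For (b), the heart is a comparison of the two characteristic flows. Given $\kappa,\lambda\in\mathcal{M}^{b}_{\mathcal{T}}$, $y\in\Omega$, $P_0\in\mathbb{T}$, put $g^y(t):=|T^{\kappa,y}_{t,0}P_0 - T^{\lambda,y}_{t,0}P_0|$, so $g^y(0)=0$. Adding and subtracting $V[\mathcal{A},\kappa,y](\sigma,T^{\lambda,y}_{\sigma,0}P_0)$ and using the $u$-Lipschitz bound $b\gamma_A$ from Lemma \ref{lem: regularity of V}(II) together with $|V[\mathcal{A},\kappa,y](\sigma,u)-V[\mathcal{A},\lambda,y](\sigma,u)|\leq 2C\bar{d}^{b}(\bar\kappa_\sigma,\bar\lambda_\sigma)$ from Lemma \ref{lem: regularity of V}(I), one gets $g^y(t)\leq b\gamma_A\int_0^t g^y(\sigma)\,\txtd\sigma + 2C\int_0^t \bar{d}^{b}(\bar\kappa_\sigma,\bar\lambda_\sigma)\,\txtd\sigma$, hence by Gronwall's Lemma \ref{lem: Gronwalls lemma} $g^y(t)\leq 2C\int_0^t e^{b\gamma_A(t-\sigma)}\bar{d}^{b}(\bar\kappa_\sigma,\bar\lambda_\sigma)\,\txtd\sigma$. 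Since $f\in\mathcal{L}$ is $1$-Lipschitz and $\int_{\mathbb{T}} f\,\txtd(\mathcal{F}\kappa^y_t - \mathcal{F}\lambda^y_t) = \int_{\mathbb{T}}\big(f(T^{\kappa,y}_{t,0}P_0) - f(T^{\lambda,y}_{t,0}P_0)\big)\,\txtd\mu_0^y(P_0)$, we get $d_{BL}(\mathcal{F}\kappa^y_t,\mathcal{F}\lambda^y_t)\leq \mu_0^y(\mathbb{T})\sup_{P_0}g^y(t)\leq 2Cb\int_0^t e^{b\gamma_A(t-\sigma)}\bar{d}^{b}(\bar\kappa_\sigma,\bar\lambda_\sigma)\,\txtd\sigma$ uniformly in $y$; integrating against $\nu_x^B$ and taking suprema over $x\in\Omega$ and $B\in\mathcal{G}$ (using $\nu_x^B(\Omega)\leq\gamma_B\leq 1$) gives $\bar{d}^{b}(\mathcal{F}\kappa_t,\mathcal{F}\lambda_t)\leq 2Cb\int_0^t e^{b\gamma_A(t-\sigma)}\bar{d}^{b}(\bar\kappa_\sigma,\bar\lambda_\sigma)\,\txtd\sigma$. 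Multiplying by $e^{-\alpha t}$, bounding $\bar{d}^{b}(\bar\kappa_\sigma,\bar\lambda_\sigma)\leq e^{\alpha\sigma}d_\alpha^{b}(\kappa,\lambda)$ and using $e^{-\alpha t}\int_0^t e^{b\gamma_A(t-\sigma)}e^{\alpha\sigma}\,\txtd\sigma = \int_0^t e^{-(\alpha-b\gamma_A)(t-\sigma)}\,\txtd\sigma\leq \frac{1}{\alpha - b\gamma_A}$ for $\alpha>b\gamma_A$, we arrive at $d_\alpha^{b}(\mathcal{F}\kappa,\mathcal{F}\lambda)\leq \frac{2Cb}{\alpha - b\gamma_A}\,d_\alpha^{b}(\kappa,\lambda)$, a strict contraction precisely when $\alpha > 2Cb + b\gamma_A$. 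This proves (I), and (II) follows by the Banach fixed point theorem on the complete space $(\mathcal{M}^{b}_{\mathcal{T}},d_\alpha^{b})$.

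I expect the bulk of the difficulty to sit in (a), specifically in the measurability of $y\mapsto\mathcal{F}\kappa^y_t$: this forces one to establish joint measurability of the parametrized flow $T^{\kappa,y}_{t,0}$, which relies on $V[\mathcal{A},\kappa,y]$ being genuinely defined and measurable in $y$ --- i.e.\ on the measure/disintegration representation of the graphop, Theorem \ref{thm: measure repres of graphops} --- as well as on bookkeeping the $d_{BL}$-measurability and the simultaneous $y$-dependence of $T^{\kappa,y}$ and $\mu_0^y$. The contraction in (b) is by comparison a routine Gronwall-plus-weighted-supremum computation, the one genuinely delicate point being to keep the factor $e^{b\gamma_A(t-\sigma)}$ \emph{inside} the $\sigma$-integral rather than crudely bounding it by $e^{b\gamma_A T}$, so that the contraction constant comes out as $\tfrac{2Cb}{\alpha-b\gamma_A}$ and the threshold $\alpha>2Cb+b\gamma_A$ is independent of $T$.
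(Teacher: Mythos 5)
Your proof is correct and follows essentially the same route as the paper's: the Banach fixed point theorem on $(\mathcal{M}^{b}_{\mathcal{T}},d_\alpha^{b})$, with the contraction obtained by comparing the two characteristic flows via the Lipschitz estimates of Lemma \ref{lem: regularity of V} and Gronwall's lemma, yielding the same constant $2Cb/(\alpha-b\gamma_A)$ (the paper applies Gronwall to the quantity already integrated over $v$ and $\nu_x^B$ rather than pointwise in $P_0$, but this is the same computation, and you treat the measurability in $y$ more explicitly than the paper does). The only nitpick is the Lipschitz constant for $t\mapsto\mathcal{F}\kappa_t$ in part (a), which should carry an extra factor $b$ from $\mu_0^y(\mathbb{T})\leq b$, matching the paper's $C\|D\|_\infty b^2$; this is immaterial to the argument.
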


The proof of Theorem \ref{thm: existence and uniqness of solutions for the fixed point eq} uses exactly the same argument used in \cite[Theorem 2.4]{Medvedev2018} and is included in Appendix~\ref{sec: appendix, technical lemmas} for convenience. The previous lemmas and discussed extensions provide the main ingredients for the proof to succeed. In particular, the key steps were to design a suitable metric space setting to work with graphops, which we accomplished above. Likewise, a second result we immediately obtain is the following theorem:	

\begin{thm} \textbf{(Existence and uniqness of a weak solution for the  VFPE with graphops)}
	\label{thm: existence and uniqness for weak solutions of the VFPE with graphops}
	Assume $(\Omega,\Sigma,m)$ is a compact Borel probability space and that $A\in \mathcal{G}^m$ is a fixed graphop with corresponding measure $\nu^A$ on $\Omega^2$ in the sense of Theorem  \ref{thm: measure repres of graphops} and  family of fiber measures $\{\nu_x^A\}_{x \in \Omega}$. Moreover, assume that the initial condition $\bar{\mu_0} \in \bar{\mathcal{M}}^{b}$ is absolutely continuous with density $\rho\in L^\infty(\Omega,m)$, i.e. it holds
	\begin{equation*}
	\mu_0^y = \rho^0(u,y)~\txtd u  \text{\quad for all $x\in \Omega$}.
	\end{equation*}
	Then there is a unique weak solution $\rho$ of the IVP (\ref{eq: IVP for the VFPE}).
\end{thm}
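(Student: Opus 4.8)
The plan is to derive the theorem from the fixed point result of Theorem \ref{thm: existence and uniqness of solutions for the fixed point eq} by showing that the fixed point measure is absolutely continuous in the $\mathbb{T}$-variable and that its density is the sought weak solution.

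First I would set up the fixed point. Choosing $b \geq 2\pi\,\esssup_{(u,y)}|\rho^0(u,y)|$ we have $\bar\mu_0 \in \bar{\mathcal{M}}^b$, so Theorem \ref{thm: existence and uniqness of solutions for the fixed point eq} provides a unique $\kappa \in \mathcal{M}^b_{\mathcal{T}}$ with
\begin{equation*}
\kappa^y_t = \mu^y_0 \circ T_{0,t}[\mathcal{A},\kappa,y] \quad\text{for all } y \in \Omega .
\end{equation*}
Write $T^y_{t,t_0} := T_{t,t_0}[\mathcal{A},\kappa,y]$. By Lemma \ref{lem: regularity of V}(II) the field $V[\mathcal{A},\kappa,y](\cdot)$ is continuous in $(t,u)$ and Lipschitz in $u$ uniformly in $t$, so each $T^y_{t,t_0}$ is a homeomorphism of $\mathbb{T}$ which, together with its inverse $T^y_{t_0,t}$, is Lipschitz with constant $\txte^{Tb\gamma_A}$ by Lemma \ref{lem: V, T are uniformly bounded}(iii). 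A bi-Lipschitz self-map of $\mathbb{T}$ sends Lebesgue-null sets to Lebesgue-null sets, hence the pushforward $\kappa^y_t = (T^y_{t,0})_*\mu^y_0$ of the absolutely continuous measure $\mu^y_0$ is itself absolutely continuous; define $\rho(t,u,y)$ by $\txtd\kappa^y_t(u) = \rho(t,u,y)\,\txtd u$. Joint measurability of $\rho$ in $(t,u,x)$ follows from the measurability of the family $\kappa$ together with the joint continuity of $(t,u,y)\mapsto T^y_{t,0}u$ (itself a consequence of continuous dependence of ODE solutions on time and parameters and Lemma \ref{lem: regularity of V}): the cumulative distribution $(t,u,y)\mapsto \kappa^y_t([0,u])$ is jointly measurable, and $\rho$ is its $\partial_u$-derivative a.e.

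Next I would verify the two defining properties of a weak solution, $m$-a.e.\ in $y$. Weak continuity in $t$ is immediate: since $\bar d^b$ dominates $\sup_y d_{BL}(\mu^y,\kappa^y)$ (take $B$ to be the identity graphop in the supremum defining $\bar d^b$), continuity of $t\mapsto\bar\kappa_t$ in $\mathcal{M}^b_{\mathcal{T}}$ gives $d_{BL}$-continuity of $t\mapsto\kappa^y_t$, and $d_{BL}$-convergence implies weak convergence of measures, i.e.\ continuity of $t\mapsto\int_\mathbb{T}\rho(t,u,y)f(u)\,\txtd u$ for every $f\in C(\mathbb{T})$. For the weak formulation, fix $w\in C^1(\mathcal{T}\times\mathbb{T})$ supported in $[0,T)\times\mathbb{T}$. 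Using $\kappa^y_t=(T^y_{t,0})_*\mu^y_0$ and the change-of-variables formula,
\begin{equation*}
\int_\mathbb{T} w(t,u)\,\txtd\kappa^y_t(u) = \int_\mathbb{T} w\big(t,T^y_{t,0}v\big)\,\txtd\mu^y_0(v).
\end{equation*}
Since $\partial_t(T^y_{t,0}v)=V[\mathcal{A},\kappa,y]\big(t,T^y_{t,0}v\big)$ by the equation of characteristics and all integrands are uniformly bounded by Lemma \ref{lem: V, T are uniformly bounded}(i)--(ii), differentiating under the integral and changing variables back yields
\begin{equation*}
\frac{\txtd}{\txtd t}\int_\mathbb{T} w(t,u)\,\txtd\kappa^y_t(u) = \int_\mathbb{T}\big(\partial_t w + V[\mathcal{A},\kappa,y]\,\partial_u w\big)(t,u)\,\txtd\kappa^y_t(u).
\end{equation*}
Integrating over $t\in[0,T]$ and using $w(T,\cdot)\equiv 0$ and $\kappa^y_0=\mu^y_0=\rho^0(\cdot,y)\,\txtd u$ produces exactly the required identity, with $V(t,u,y)=V[\mathcal{A},\kappa,y](t,u)$; and by the remark following Lemma \ref{lem: comput of dAmu} one has $\txtd(\mathcal{A}\kappa_t)^y(u)=\big(A\rho(t,u,\cdot)\big)(y)\,\txtd u$, so $V(t,u,y)=V[A,\rho,y](t,u)$ as in \eqref{eq: characteristic mean field V}.

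Finally, for uniqueness, if $\tilde\rho$ is another weak solution then $t\mapsto\tilde\kappa^y_t:=\tilde\rho(t,\cdot,y)\,\txtd u$ solves the same fixed point equation \eqref{eq: fixed point eq assoc with the VFPE} — this is the converse direction of the characteristic computation, carried out exactly as in \cite{Medvedev2018} — hence $\tilde\kappa=\kappa$ by the uniqueness part of Theorem \ref{thm: existence and uniqness of solutions for the fixed point eq}, so $\tilde\rho=\rho$ a.e. I expect the main obstacle to be the regularity and measurability bookkeeping of the first step: proving that the fixed point measure is genuinely absolutely continuous with a jointly measurable density, and justifying the differentiation-under-the-integral and change-of-variables steps at the merely Lipschitz (rather than $C^1$) regularity of the flow guaranteed by our hypotheses on $D$. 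This can be circumvented, if one prefers, by first establishing the theorem for smooth $D$, where $T^y_{t,t_0}$ is a diffeomorphism, and then approximating a general Lipschitz $D$ by smooth ones and passing to the limit.
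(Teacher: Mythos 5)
Your proposal is correct and is essentially the paper's own proof: the paper simply defers to \cite[Theorem~3.2]{Medvedev2018}, and what you have written out is precisely the content of that deferred argument (Neunzert's method) — reduction to the fixed point equation of Theorem \ref{thm: existence and uniqness of solutions for the fixed point eq}, absolute continuity of the pushforward under the bi-Lipschitz characteristic flow, verification of the weak formulation by differentiating along characteristics, and uniqueness via uniqueness of the fixed point — correctly transplanted to the graphop setting via the extended operator $\mathcal{A}$ and the metric $\bar{d}^{b}$.
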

Having provided a suitable new metric setting above, the proof of Theorem \ref{thm: existence and uniqness for weak solutions of the VFPE with graphops} can be deduced from~\cite[Theorem 3.2]{Medvedev2018}.

\subsection{Further regularity in $x$}
Additional smoothness assumptions for the graphop $A$ and the initial condition $\mu_0$ increase the regularity of the weak solution in $x$. In this section let us restrict ourselves to proving continuity in $x$. 

Let us assume that the graphop $A$ satisfies following continuity property 
\begin{equation} \label{eq: cont. pproperty for graphop A}
Af(x) \to Af(x_0) \quad \text{ for } x\to x_0, \quad \forall x_0\in \Omega, f \in C(\Omega). 
\end{equation}
We define the restricted space
\begin{align*}
\bar{\mathcal{M}}^{b,2}:= \{ \bar{\mu}: \Omega \to \mathcal{M}_f(\mathbb{T}):  &\bar{\mu} \text{ is measurable, } x\mapsto \mu^x(S) \text{ is continuous for all  $S \in \mathcal{B}(\mathbb{T})$ with $\lambda(\partial{S}) = 0$} \\ &\text{and} \sup_{x \in \Omega} \mu^x(\mathbb{T}) \leq b \} \quad \quad  \subset \bar{\mathcal{M}}^{b}.
\end{align*}
It can be shown that  $(\bar{\mathcal{M}}^{b,2},\bar{d}^{b})$ is a complete metric space, cf.~Lemma  \ref{lem: Mb,c is complete}, so that
\begin{equation*}
\mathcal{M}_\mathcal{T}^{b,2} := C(\mathcal{T}, (\bar{\mathcal{M}}^{b,2}, \bar{d}^{b}))
\end{equation*} 
is also complete.

Further, we have following lemma
\begin{lem} \textbf{(Increased regularity in $x$ for the fixed-point operator)} \label{lem: increased regularity in $x$}\\
	\textbf{(i)} For the extended graphop $\mathcal{A}$ we have 
	\begin{equation*}
	\mathcal{A}(\bar{\mathcal{M}}^{b,2}) \subset \bar{\mathcal{M}}^{b,2}. 
	\end{equation*}
	\textbf{(ii)} For any $\mu \in \mathcal{M}^{b,2}_\mathcal{T}$ and $x \in \Omega$, $t \in [0,T], u \in \mathbb{T}$, the characteristic field $V[\mathcal{A}, \mu,x](t,u)$ is continuous in $x$.\\
	\textbf{(iii)} The corresponding flow $T^x_{t,t_0}[\mu]u$ is for any $\mu \in \mathcal{M}^{b,2}_\mathcal{T}$ and $x \in \Omega$, $t \in [0,T], u \in \mathbb{T}$ continuous in $x$. \\
	\textbf{(iv)} For the fixed-point operator $\mathcal{F}$ defined in (\ref{eq: map F in thm exist of solut}) we have 
	\begin{equation*}
	\mathcal{F}(\mathcal{M}^{b,2}_{\mathcal{T}}) \subset \mathcal{M}^{b,2}_{\mathcal{T}}. 
	\end{equation*}
\end{lem}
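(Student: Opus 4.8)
\textbf{Proof plan for Lemma~\ref{lem: increased regularity in $x$}.}

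The plan is to prove the four parts in the stated order, since each feeds into the next. For \textbf{(i)}, given $\bar\mu \in \bar{\mathcal{M}}^{b,2}$ and a Borel set $S \subset \mathbb{T}$ with $\lambda(\partial S) = 0$, I would use Lemma~\ref{lem: comput of dAmu} to write $(\mathcal{A}\mu)^x(S) = A\big(y \mapsto \mu^y(S)\big)(x)$. By definition of $\bar{\mathcal{M}}^{b,2}$, the function $f_S := \big(y\mapsto \mu^y(S)\big)$ is continuous on $\Omega$, and it is bounded by $b$; hence the continuity hypothesis (\ref{eq: cont. pproperty for graphop A}) on $A$ gives that $x \mapsto Af_S(x) = (\mathcal{A}\mu)^x(S)$ is continuous. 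The norm bound $\sup_x(\mathcal{A}\mu)^x(\mathbb{T}) \le b$ is exactly (\ref{eq: Am leq b}), so $\mathcal{A}\bar\mu \in \bar{\mathcal{M}}^{b,2}$.

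For \textbf{(ii)}, fix $\mu \in \mathcal{M}^{b,2}_{\mathcal{T}}$, $t \in [0,T]$, $u \in \mathbb{T}$, and a point $x_0 \in \Omega$; I want $V[\mathcal{A},\mu,x](t,u) \to V[\mathcal{A},\mu,x_0](t,u)$ as $x \to x_0$. Writing $V[\mathcal{A},\mu,x](t,u) = C\int_0^{2\pi} D(\tilde u - u)\,\txtd(\mathcal{A}\mu_t)^x(\tilde u)$, I would split $D(\cdot - u)$ (as in the proof of Lemma~\ref{lem: regularity of V}) into its positive and negative parts $f^u_\pm \in \mathcal{L}$; each $f^u_\pm$ is bounded and its set of discontinuities has Lebesgue (hence, via absolute continuity in the relevant sense) measure zero, and by part \textbf{(i)} the measures $(\mathcal{A}\mu_t)^x$ depend continuously on $x$ in the sense that $(\mathcal{A}\mu_t)^x(S)\to(\mathcal{A}\mu_t)^{x_0}(S)$ for every $S$ with $\lambda(\partial S)=0$, which is precisely weak convergence $(\mathcal{A}\mu_t)^x \to_w (\mathcal{A}\mu_t)^{x_0}$ as $x\to x_0$ by the Portmanteau theorem. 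Since $f^u_\pm$ are bounded with null discontinuity sets, integrating them against a weakly convergent sequence of measures passes to the limit (again Portmanteau, cf.~Remark~\ref{rem: pointwise convergence and graphop convergence}(iii)), giving continuity of $V$ in $x$. Part \textbf{(iii)} then follows from the integral representation $T^x_{t,t_0}[\mu]u = u + \int_{t_0}^t V[\mathcal{A},\mu,x](s, T^x_{s,t_0}[\mu]u)\,\txtd s$ together with \textbf{(ii)}, the uniform bounds and uniform Lipschitz-in-$u$ control from Lemma~\ref{lem: V, T are uniformly bounded}, and Gronwall's inequality applied to $x \mapsto |T^x_{t,t_0}[\mu]u - T^{x_0}_{t,t_0}[\mu]u|$: the inhomogeneous term $\int_{t_0}^t |V[\mathcal{A},\mu,x](s,\cdot) - V[\mathcal{A},\mu,x_0](s,\cdot)|\,\txtd s$ tends to $0$ as $x\to x_0$ by \textbf{(ii)} and dominated convergence.

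Finally, for \textbf{(iv)}, given $\kappa \in \mathcal{M}^{b,2}_{\mathcal{T}}$ I must show $\mathcal{F}\kappa \in \mathcal{M}^{b,2}_{\mathcal{T}}$, i.e.\ for each $t$ the family $y \mapsto (\mathcal{F}\kappa)^y_t = \mu_0^y \circ T_{0,t}[\mathcal{A},\kappa,y]$ lies in $\bar{\mathcal{M}}^{b,2}$. The mass bound is immediate since $\mu_0^y$ has mass $\le b$ and precomposition with the bijection $T_{0,t}$ preserves total mass. For continuity in $y$ of $y\mapsto (\mathcal{F}\kappa)^y_t(S)$ with $\lambda(\partial S) = 0$, note $(\mathcal{F}\kappa)^y_t(S) = \mu_0^y\big(T_{0,t}[\mathcal{A},\kappa,y]^{-1}(S)\big) = \mu_0^y\big(T_{t,0}[\mathcal{A},\kappa,y](S)\big)$; as $y \to y_0$ the flow map $T_{t,0}[\mathcal{A},\kappa,y]$ converges to $T_{t,0}[\mathcal{A},\kappa,y_0]$ by \textbf{(iii)} (in fact uniformly in the base point, by the Lipschitz-in-$u$ bound $\txte^{Tb\gamma_A}$), so the sets $T_{t,0}[\mathcal{A},\kappa,y](S)$ converge to $T_{t,0}[\mathcal{A},\kappa,y_0](S)$ in a sense controlling their symmetric difference, and combining this with the joint continuity $y\mapsto \mu_0^y$ (which is in $\bar{\mathcal{M}}^{b,2}$ by hypothesis on the initial condition) yields $(\mathcal{F}\kappa)^y_t(S) \to (\mathcal{F}\kappa)^{y_0}_t(S)$; one also checks $t\mapsto(\mathcal{F}\kappa)_t$ stays continuous into $\bar{\mathcal{M}}^{b,2}$, which is already part of $\mathcal{F}$ mapping into $\mathcal{M}^{b}_{\mathcal{T}}$. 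The main obstacle is the bookkeeping in \textbf{(iv)}: one must carefully track how a boundary-null set $S$ behaves under the $x$-dependent diffeomorphisms $T_{0,t}$ — in particular that $T_{t,0}[\mathcal{A},\kappa,y](S)$ still has boundary-null image and that the pushforward measures converge weakly — which requires combining the uniform Lipschitz bounds on the flow with the weak-continuity properties of $\mu_0^{\cdot}$, rather than any single soft argument.
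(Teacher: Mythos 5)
Your treatment of parts \textbf{(i)}--\textbf{(iii)} is correct and is essentially the paper's argument with the details filled in: \textbf{(i)} is exactly the combination of Lemma~\ref{lem: comput of dAmu}, the continuity hypothesis (\ref{eq: cont. pproperty for graphop A}) applied to the continuous bounded function $y\mapsto \mu^y(S)$ (continuous precisely for boundary-null $S$, which is all that is needed), and the mass bound (\ref{eq: Am leq b}); your Portmanteau route for \textbf{(ii)} is a cleaner version of the paper's ``by density'' step (note $D(\cdot-u)$ is already bounded and Lipschitz, so weak convergence of $(\mathcal{A}\mu_t)^x$ suffices without splitting into $f^u_{\pm}$); and \textbf{(iii)} is the standard Gronwall argument for continuous dependence on a parameter, where you should make explicit that pointwise-in-$u$ convergence of $V[\mathcal{A},\mu,x](s,\cdot)$ upgrades to uniform-in-$u$ convergence because the family is equi-Lipschitz in $u$ with constant $b\gamma_A$ and $\mathbb{T}$ is compact.

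For \textbf{(iv)} you have correctly isolated the crux, but the step you leave open is a genuine gap, not mere bookkeeping. Writing $E_y:=T_{t,0}[\mathcal{A},\kappa,y](S)$ (or $T_{0,t}$, depending on the pushforward convention -- the issue is identical), one gets from the uniform bi-Lipschitz bounds that $E_y\triangle E_{y_0}$ is contained in an $\varepsilon$-neighborhood of $\partial E_{y_0}$ for $y$ near $y_0$, and Portmanteau then yields $\limsup_{y\to y_0}|\mu_0^y(E_y)-\mu_0^{y_0}(E_{y_0})|\leq \mu_0^{y_0}(\partial E_{y_0})$. Membership of $\bar{\mu}_0$ in $\bar{\mathcal{M}}^{b,2}$ only guarantees $\lambda(\partial E_{y_0})=0$, not $\mu_0^{y_0}(\partial E_{y_0})=0$, and without the latter the argument does not close. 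Indeed the statement fails for general $\bar{\mu}_0\in\bar{\mathcal{M}}^{b,2}$: take $\mu_0^y=\delta_p$ for all $y$ (constant, hence in $\bar{\mathcal{M}}^{b,2}$); then $(\mathcal{F}\kappa)^y_t(S)=\chi_S(T_{t,0}[\mathcal{A},\kappa,y]p)$, which jumps whenever the continuously moving point $T_{t,0}[\mathcal{A},\kappa,y]p$ crosses $\partial S$. To finish \textbf{(iv)} one needs an extra hypothesis ensuring the transported initial measures do not charge Lebesgue-null sets, e.g.\ $\mu_0^y=\rho^0(\cdot,y)\,\txtd u$ with $\rho^0\in L^\infty$ (which is exactly the setting of Theorem~\ref{thm: existence and uniqness for weak solutions of the VFPE with graphops} and of the mean-field application); then $\mu_0^{y_0}(\partial E_{y_0})\leq\|\rho^0\|_\infty\,\lambda(\partial E_{y_0})=0$ and your argument goes through. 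Be aware that the paper's own proof of \textbf{(iv)} is the single line ``follows by (iii) and the definition of $\bar{\mathcal{M}}^{b,2}$'' and is silent on this point, so your proposal is, if anything, more honest about where the real work lies.
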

\begin{proof}
	\textbf{(i)} Follows from (\ref{eq: cont. pproperty for graphop A}) and the definition of $\bar{\mathcal{M}}^{b,2}$. \\
	\textbf{(ii)} By \textbf{(i)} we have that the map 
	\begin{equation}
	x \mapsto \int_\mathbb{T} f(\tilde{u}) d \mathcal{A}\mu^x_t(\tilde{u})
	\end{equation}
	is continuous for any characteristic functions $f = \mathcal{\chi}_S$, $S \in \mathcal{B}(\mathbb{T})$. 
	By density, this property extends to the case $f(\tilde{u}) := D(\tilde{u}-u)$. \\
	\textbf{(iii)} Follows by \textbf{(ii)}.\\
	\textbf{(iv)} Follows by \textbf{(iii)} and the definition of $\bar{\mathcal{M}}^{b,2}$. 
\end{proof}
\begin{cor} \textbf{(Continuity of the solution of the fixed point equation in $x$)} \label{cor: conti of the solut of the fixed point eq in x}\\
	For any initial condition $\bar{\mu}_0 \in \bar{\mathcal{M}}^{b,2}$ there is a uique fixed point $\kappa \in \mathcal{M}^{b,2}_{\mathcal{T}}$ of the map $\mathcal{F}$, i.e., there is a unique $\kappa \in \mathcal{M}^{b,2}_{\mathcal{T}}$ satisfying
	\begin{equation*}
	\mathcal{F} \kappa = \kappa.
	\end{equation*}	
\end{cor}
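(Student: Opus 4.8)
\emph{Proof plan.} The statement is essentially a direct consequence of the Banach fixed point theorem, applied this time in the restricted space, together with the invariance property already at our disposal. Recall that $(\bar{\mathcal{M}}^{b,2},\bar{d}^b)$ is a complete metric space by Lemma~\ref{lem: Mb,c is complete}, hence so is $\mathcal{M}^{b,2}_{\mathcal{T}} = C(\mathcal{T},(\bar{\mathcal{M}}^{b,2},\bar{d}^b))$ equipped with $d_\alpha^b$; moreover $\mathcal{M}^{b,2}_{\mathcal{T}}$ is a closed subset of the ambient space $\mathcal{M}^{b}_{\mathcal{T}}$, since a complete metric subspace is closed and uniform limits of families taking values in the closed set $\bar{\mathcal{M}}^{b,2}$ again take values there. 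By Lemma~\ref{lem: increased regularity in $x$}(iv), the operator $\mathcal{F}$ from~\eqref{eq: map F in thm exist of solut} maps $\mathcal{M}^{b,2}_{\mathcal{T}}$ into itself, and by Theorem~\ref{thm: existence and uniqness of solutions for the fixed point eq}(I) it is a contraction on $(\mathcal{M}^{b}_{\mathcal{T}},d_\alpha^b)$ for every $\alpha > 2Cb + b\gamma_A$; this contraction estimate is trivially inherited by the restriction $\mathcal{F}|_{\mathcal{M}^{b,2}_{\mathcal{T}}}$.

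First I would invoke Banach's fixed point theorem on the complete metric space $(\mathcal{M}^{b,2}_{\mathcal{T}},d_\alpha^b)$ with the self-contraction $\mathcal{F}$, obtaining a unique $\kappa \in \mathcal{M}^{b,2}_{\mathcal{T}}$ with $\mathcal{F}\kappa = \kappa$. It then remains only to observe that this $\kappa$ coincides with the global fixed point produced by Theorem~\ref{thm: existence and uniqness of solutions for the fixed point eq}(II): since that theorem already asserts uniqueness of the fixed point of $\mathcal{F}$ in the larger space $\mathcal{M}^{b}_{\mathcal{T}} \supset \mathcal{M}^{b,2}_{\mathcal{T}}$, the fixed point found in the smaller space must be \emph{the} fixed point, so the solution of the fixed point equation automatically enjoys the continuity in $x$ encoded in $\bar{\mathcal{M}}^{b,2}$. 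Equivalently, one may avoid re-invoking Banach: starting the iteration $\kappa^{n+1} = \mathcal{F}\kappa^n$ at the constant family $\kappa^0_t \equiv \bar{\mu}_0 \in \bar{\mathcal{M}}^{b,2}$, every iterate stays in $\mathcal{M}^{b,2}_{\mathcal{T}}$ by Lemma~\ref{lem: increased regularity in $x$}(iv); the sequence converges in $d_\alpha^b$ to the global fixed point $\kappa$ by Theorem~\ref{thm: existence and uniqness of solutions for the fixed point eq}(II); and $\kappa \in \mathcal{M}^{b,2}_{\mathcal{T}}$ because the latter is closed in $\mathcal{M}^{b}_{\mathcal{T}}$.

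The genuine content does not lie in this corollary but in the preparatory results it rests on — the completeness of $(\bar{\mathcal{M}}^{b,2},\bar{d}^b)$ and, above all, the invariance $\mathcal{F}(\mathcal{M}^{b,2}_{\mathcal{T}}) \subset \mathcal{M}^{b,2}_{\mathcal{T}}$ of Lemma~\ref{lem: increased regularity in $x$}, which is where the continuity hypothesis~\eqref{eq: cont. pproperty for graphop A} on the graphop $A$ is used, propagated in turn through the extended graphop $\mathcal{A}$, the characteristic field $V$, the flow $T_{t,t_0}$, and finally the pushforward defining $\mathcal{F}$. Given those, the corollary is a one-line consequence of the contraction mapping principle, and I anticipate no further obstacle.
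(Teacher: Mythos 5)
Your proposal is correct and follows essentially the same route as the paper, which simply cites the invariance $\mathcal{F}(\mathcal{M}^{b,2}_{\mathcal{T}}) \subset \mathcal{M}^{b,2}_{\mathcal{T}}$ from Lemma \ref{lem: increased regularity in $x$}(iv) together with Theorem \ref{thm: existence and uniqness of solutions for the fixed point eq}(II); you have merely spelled out the completeness/closedness details (via Lemma \ref{lem: Mb,c is complete}) that the paper leaves implicit. No gap.
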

\begin{proof}
	Follows directly from Lemma \ref{lem: increased regularity in $x$} \textbf{(iv)} and Theorem \ref{thm: existence and uniqness of solutions for the fixed point eq} \textbf{(II)} 
\end{proof}

\section{Continuous dependence of the solution of the fixed point equation on the graphop}
\label{sec: cont depen of the solution of the VFPE on the graphop}

After having established existence and uniqueness of solutions for the VFPE (\ref{eq: IVP for the VFPE}) we now want to ensure that small perturbations of the graphop will have small effect on the solution.
In this section we assume that  $(\Omega,\Sigma,m)$ is a compact Borel probability space with topology induced by a metric. 
 Recall that to the VFPE  (\ref{eq: IVP for the VFPE}) corresponds the fixed point equation (\ref{eq: fixed point eq assoc with the VFPE}). To compare solutions of this fixed point equation, we introduce following pseudometric on $\bar{\mathcal{M}}^b$:
\begin{equation}
\bar{d}^{b,m} = \int_\Omega d_{BL}(\mu^y,\kappa^y) ~\txtd m(y) =  \int_\Omega \sup_{f\in \mathcal{L}} \Big|f(v)d (\mu^y - \kappa^y)(v )\Big| ~\txtd m(y).
\end{equation}
We note that proofs could possible work in various different topologies, but this is beyond the scope of the current work, since we are only interested in the existence of a suitable topology, where continuous dependence holds.

\begin{lem} \textbf{(Estimation for varying measure)}
	\label{lem: estimations for varying measure for 1-1 bounded graphops} \\
	Let $\bar{\mu}, \bar{\kappa} \in \bar{\mathcal{M}}^b$ and $A\in \mathcal{G}^m$ with $\parallel A\parallel_{p\to q} <\infty$ for $p,q\in [1,\infty]$. Then, for any $t\in \mathcal{T}, u \in \mathbb{T}$ we have
	\begin{equation*}
	\int_\Omega	\Big| V[\mathcal{A}, \mu,x](t,u) - 	V[\mathcal{A}, \kappa,x](t,u)\Big|~\txtd m(x) \leq 	2C \parallel A \parallel_{p \to q} \Big(\int_\Omega d_{BL}(\mu^y_t,\kappa^y_t)^p~\txtd m(y) \Big)^{\frac{1}{p}}.
	\end{equation*}
	Especially, for $\parallel A \parallel_{1 \to q} < \infty$ we have
	\begin{equation*}
	\int_\Omega	\Big| V[\mathcal{A}, \mu,x](t,u) - 	V[\mathcal{A}, \kappa,x](t,u)\Big|~\txtd m(x) \leq 2C \parallel A \parallel_{1 \to q} \bar{d}^{b,m}(\bar{\mu}_t, \bar{\kappa}_t).
	\end{equation*}
\end{lem}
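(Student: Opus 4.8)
The plan is to estimate the difference $V[\mathcal{A},\mu,x](t,u) - V[\mathcal{A},\kappa,x](t,u)$ pointwise in $x$ by a bounded-Lipschitz distance of the extended measures, exactly as in Lemma~\ref{lem: regularity of V}(I), but this time keeping track of the $x$-dependence and integrating over $\Omega$ rather than taking a supremum. First I would recall from the proof of Lemma~\ref{lem: regularity of V}(I) that, writing $f^u_{+}(\tilde u) = \chi_{I_{\geq 0}(u)}D(\tilde u - u)$ and $f^u_{-}(\tilde u) = -\chi_{I_{\leq 0}(u)}D(\tilde u - u)$, both of which lie in $\mathcal{L}$, we have the pointwise bound
\begin{equation*}
\Big| V[\mathcal{A}, \mu,x](t,u) - V[\mathcal{A}, \kappa,x](t,u)\Big| \leq 2C\, d_{BL}(\mathcal{A}\mu^x_t, \mathcal{A}\kappa^x_t).
\end{equation*}
By Lemma~\ref{lem: comput of dAmu}, for every $g\in \mathcal{L}$ we have $\int_\mathbb{T} g\,\txtd(\mathcal{A}\mu^x_t - \mathcal{A}\kappa^x_t) = A\big(\int_\mathbb{T} g\,\txtd(\mu^\cdot_t - \kappa^\cdot_t)\big)(x)$, and taking absolute values and the supremum over $g\in\mathcal{L}$ gives $d_{BL}(\mathcal{A}\mu^x_t,\mathcal{A}\kappa^x_t) \leq A(\varphi)(x)$, where $\varphi(y) := d_{BL}(\mu^y_t,\kappa^y_t)\geq 0$ — here one uses positivity preservation of $A$ to move the absolute value inside.

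Next I would integrate this inequality in $x$ over $\Omega$ and apply the definition of the operator norm $\|A\|_{p\to q}$ together with the fact that $m$ is a probability measure. Concretely,
\begin{equation*}
\int_\Omega \Big| V[\mathcal{A}, \mu,x](t,u) - V[\mathcal{A}, \kappa,x](t,u)\Big|\,\txtd m(x) \leq 2C \int_\Omega A(\varphi)(x)\,\txtd m(x) \leq 2C\, \|A\varphi\|_1 \leq 2C\, \|A\|_{p\to q}\, \|\varphi\|_p,
\end{equation*}
where in the middle step I used $\int_\Omega |A\varphi|\,\txtd m = \|A\varphi\|_{L^1(\Omega,m)} \leq \|A\varphi\|_{L^q(\Omega,m)}$ because $m(\Omega)=1$ and $q\geq 1$, and in the last step the definition of $\|A\|_{p\to q}$. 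Since $\|\varphi\|_p = \big(\int_\Omega d_{BL}(\mu^y_t,\kappa^y_t)^p\,\txtd m(y)\big)^{1/p}$, this is precisely the first claimed inequality. The special case $p=1$ is then immediate: $\|\varphi\|_1 = \int_\Omega d_{BL}(\mu^y_t,\kappa^y_t)\,\txtd m(y) = \bar{d}^{b,m}(\bar\mu_t,\bar\kappa_t)$.

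The only subtle point — and the step I would flag as the main obstacle — is justifying the passage from $d_{BL}(\mathcal{A}\mu^x_t,\mathcal{A}\kappa^x_t)\leq \sup_{g\in\mathcal{L}}|A(\psi_g)(x)|$ (with $\psi_g(y) = \int g\,\txtd(\mu^y_t-\kappa^y_t)$) to $\leq A(\varphi)(x)$: one must bound $|\psi_g(y)| \leq d_{BL}(\mu^y_t,\kappa^y_t) = \varphi(y)$ pointwise, hence $-\varphi \leq \psi_g \leq \varphi$, and then invoke that $A$ is positivity preserving (so monotone) to get $|A\psi_g| \leq A\varphi$ for $m$-a.e.~$x$. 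A minor measurability check is needed to ensure $\varphi$ and $\psi_g$ are legitimately in $L^\infty(\Omega,m)$ so that $A$ applies; this follows from the measurability built into $\bar{\mathcal{M}}^b$ and the boundedness $\varphi \leq 2b$. Everything else is a direct chain of the already-established lemmas and the definition of the operator norm, so no further difficulty is anticipated.
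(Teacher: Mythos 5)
Your proof is correct and follows essentially the same route as the paper: both reduce the estimate to applying the chain $\|\cdot\|_{1}\leq\|\cdot\|_{q}\leq\|A\|_{p\to q}\|\cdot\|_{p}$ (H\"older on the probability space plus the operator-norm definition) to the graphop acting on a function of $y$ controlled by $d_{BL}(\mu^{y}_{t},\kappa^{y}_{t})$. The only difference is cosmetic: the paper applies the norm chain to $y\mapsto\int_{\mathbb{T}}D(\tilde{u}-u)\,\txtd(\mu^{y}_{t}-\kappa^{y}_{t})(\tilde{u})$ and invokes the $f^{u}_{\pm}\in\mathcal{L}$ decomposition at the last step, whereas you invoke it first (via Lemmas \ref{lem: regularity of V} and \ref{lem: estim dx bounds dWA}) and then apply the norms to $A\varphi$ with $\varphi(y)=d_{BL}(\mu^{y}_{t},\kappa^{y}_{t})$, which yields the identical bound.
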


\begin{proof}
	By H\"{o}lder`s inequality we have $\parallel f \parallel_1 \leq \parallel f \parallel_q$ for any $f\in L^q(\Omega,m)$.
	Thus, 
	\begin{align*}
	\int_\Omega	\Big| V[\mathcal{A}, \mu,x](t,u) &- 	V[\mathcal{A}, \kappa,x](t,u)\Big|~\txtd m(x) \\&= C \int_\Omega \Big| A \Big( \int_{\mathbb{T}} D( \tilde{u} - u) (~\txtd\mu_t^{\cdot}(\tilde{u}) - ~\txtd\kappa_t^{\cdot}(\tilde{u})) \Big) (x)    \Big| ~\txtd m(x)  \\
	&\leq \Big(\int_\Omega \Big| A \Big( \int_{\mathbb{T}} D( \tilde{u} - u) (~\txtd\mu_t^{\cdot}(\tilde{u}) - ~\txtd\kappa_t^{\cdot}(\tilde{u})) \Big) (x)    \Big|^q ~\txtd m(x)\Big)^{\frac{1}{q}}\\
	&\leq C\parallel A \parallel_{p \to q} \Big(\int_\Omega \Big|\int_{\mathbb{T}} D( \tilde{u} - u) (~\txtd\mu_t^{x}(\tilde{u}) - ~\txtd\kappa_t^{x}(\tilde{u}))     \Big|^p ~\txtd m(x) \Big)^{\frac{1}{p}} \\
	&\leq  2C \parallel A \parallel_{p \to q} \Big(\int_\Omega d_{BL}(\mu_t^y,\kappa_t^y)^p~\txtd m(y) \Big)^{\frac{1}{p}}.
	\end{align*}
	This finishes the proof.		
\end{proof}

Now, for any $n\in \mathbb{N}$, let $A^n,A \in \mathcal{G}^m$ be graphops with corresponding canonical extentions $\mathcal{A}^n$, $\mathcal{A}$ on the space $\bar{M}^b$, as in (\ref{eq: extention of the measn field op on measures}). 
For an initial condition $\bar{\mu_0} \in \bar{\mathcal{M}}^{b}$, let $\mu^n, \mu \in \mathcal{M}^b_\mathcal{T}$ be the solutions of following fixed point equations
\begin{subequations}\label{eq: fixed point equations 2}
	\begin{alignat}{4}
	\mu_t^{n,y} &= \mu_0^y \circ T_{0,t}[\mathcal{A}^n,\mu^n,y],  \text{ \quad for all $y \in \Omega$,} \\
	\mu_t^y &= \mu_0^y \circ T_{0,t}[\mathcal{A},\mu,y],  \text{ \quad for all $y \in \Omega$,}
	\end{alignat}
\end{subequations}
cf.~Theorem \ref{thm: existence and uniqness of solutions for the fixed point eq}. 

\begin{prop} \textbf{(Continuous dependence of fixed point solutions on graphops)} 
	\label{prop: continuous dependence on the graphop II}\\
	Assume that $A^n \to_o A$ and $\parallel A \parallel_{1\to q} <\infty$, for a $q\in [1,\infty]$. Further assume that for any Borel set $S\subset \mathbb{T}$ with $\lambda(\partial S) = 0$ and $t\in [0,T]$ the function $x\mapsto\mu^x_t(S)$ is bounded and has discontinuity set $U$ with $\mu(U)=0$. Then,
	\begin{equation}
	\sup_{t \in \mathcal{T}} \bar{d}^{b,m}(\bar{\mu}^n_t,\bar{\mu}_t ) \to 0 \quad \text{ as } n \to \infty.
	\end{equation}	
\end{prop}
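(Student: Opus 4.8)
The plan is to run a Grönwall-type argument on the quantity $e(t) := \bar{d}^{b,m}(\bar\mu^n_t,\bar\mu_t)$, exactly in the spirit of the contraction estimates used for Theorem~\ref{thm: existence and uniqness of solutions for the fixed point eq}, but now tracking the \emph{two distinct graphops} $\mathcal{A}^n$ and $\mathcal{A}$. First I would write, for $m$-a.e.\ $y\in\Omega$,
\begin{equation*}
\mu_t^{n,y}-\mu_t^y \;=\; \mu_0^y\circ T_{0,t}[\mathcal{A}^n,\mu^n,y] \;-\; \mu_0^y\circ T_{0,t}[\mathcal{A},\mu,y],
\end{equation*}
and estimate $d_{BL}$ of this difference by comparing the two flows. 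Testing against $f\in\mathcal{L}$ and using the fundamental theorem of calculus along the characteristics gives, as in the proof of the fixed-point theorem, a bound of the form
\begin{equation*}
d_{BL}(\mu_t^{n,y},\mu_t^y)\;\le\; C'\int_0^t \sup_{u\in\mathbb{T}}\bigl|V[\mathcal{A}^n,\mu^n,y](s,u)-V[\mathcal{A},\mu,y](s,u)\bigr|\,\txtd s,
\end{equation*}
with $C'$ depending on $T$, $b$, $\gamma_A$ (via the Lipschitz bound $\txte^{Tb\gamma_A}$ on the flow from Lemma~\ref{lem: V, T are uniformly bounded}). Then I would split the integrand by the triangle inequality into a "same graphop, different measure" term $|V[\mathcal{A},\mu^n,y]-V[\mathcal{A},\mu,y]|$ and a "different graphop, same measure" term $|V[\mathcal{A}^n,\mu^n,y]-V[\mathcal{A},\mu^n,y]|$.

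For the first term, integrating over $y$ and applying Lemma~\ref{lem: estimations for varying measure for 1-1 bounded graphops} (with $p=1$, using $\|A\|_{1\to q}<\infty$) bounds it by $2C\|A\|_{1\to q}\,\bar d^{b,m}(\bar\mu^n_s,\bar\mu_s) = 2C\|A\|_{1\to q}\,e(s)$. For the second term, the key point is that
\begin{equation*}
V[\mathcal{A}^n,\mu^n,y](s,u)-V[\mathcal{A},\mu^n,y](s,u) = C\Bigl(\bigl(A^n g^{n}_{s,u}\bigr)(y) - \bigl(A g^{n}_{s,u}\bigr)(y)\Bigr),\qquad g^n_{s,u}(z):=\int_{\mathbb{T}} D(\tilde u-u)\,\txtd\mu^z_s(\tilde u),
\end{equation*}
by Lemma~\ref{lem: comput of dAmu}. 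Here $g^n_{s,u}$ is a bounded function of $z$ (bounded by $b$), and by the hypothesis that $x\mapsto\mu^x_s(S)$ is bounded with $\mu$-null discontinuity set for all "nice" $S$, one checks that $g^n_{s,u}$ is bounded with $m$-null discontinuity set (approximate $D$ uniformly by simple functions built on arcs with null boundary, so $g^n_{s,u}$ is a uniform limit of functions to which the discontinuity hypothesis applies). Hence by Remark~\ref{rem: pointwise convergence and graphop convergence}(iii), $A^n g^n_{s,u}(y)\to A g^n_{s,u}(y)$ for $m$-a.e.\ $y$; and the family $\{g^n_{s,u}\}$ is uniformly bounded by $b$ in sup-norm, so $\|A^ng^n_{s,u}\|_\infty,\|Ag^n_{s,u}\|_\infty\le b\gamma_{A^n},b\gamma_A\le b$, giving an $L^1(m)$-dominated-convergence setup. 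Therefore $\int_\Omega |V[\mathcal{A}^n,\mu^n,y]-V[\mathcal{A},\mu^n,y]|\,\txtd m(y)\to 0$; I would denote this quantity $\delta_n(s,u)$ and argue (via dominated convergence again, now in $(s,u)$ over the compact $\mathcal{T}\times\mathbb{T}$, since the integrand is bounded by $2Cb$) that $\varepsilon_n:=\sup_{s,u}\delta_n(s,u)\to 0$.

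Putting these together and integrating the first display over $y$ against $m$ yields
\begin{equation*}
e(t)\;\le\; C'\!\int_0^t\Bigl(2C\|A\|_{1\to q}\,e(s)+\varepsilon_n\Bigr)\txtd s\;\le\; C'\varepsilon_n T + 2CC'\|A\|_{1\to q}\!\int_0^t e(s)\,\txtd s,
\end{equation*}
so Grönwall's lemma (Lemma~\ref{lem: Gronwalls lemma}) gives $e(t)\le C'\varepsilon_n T\,\txte^{2CC'\|A\|_{1\to q}T}$ uniformly in $t\in\mathcal{T}$, which tends to $0$. The main obstacle I anticipate is the measurability/continuity bookkeeping in the second term — specifically, verifying rigorously that $g^n_{s,u}$ has $m$-null discontinuity set so that Remark~\ref{rem: pointwise convergence and graphop convergence}(iii) applies, and then upgrading the a.e.-pointwise convergence $A^ng^n_{s,u}\to Ag^n_{s,u}$ to convergence of the $L^1(m)$-norm \emph{uniformly} in $(s,u)$; the uniform-in-$(s,u)$ part is where one must be careful, and I would handle it by a compactness argument exploiting equicontinuity of $u\mapsto g^n_{s,u}$ (inherited from the Lipschitz bound on $D$ and the uniform mass bound $b$) together with the dominated convergence bound $2Cb$ on $\mathcal{T}\times\mathbb{T}$.
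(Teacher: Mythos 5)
Your overall architecture --- a Gr\"onwall estimate on $e(t)=\bar d^{b,m}(\bar\mu^n_t,\bar\mu_t)$, a triangle-inequality split of the velocity-field difference into a ``same graphop, different measure'' piece handled by Lemma \ref{lem: estimations for varying measure for 1-1 bounded graphops} and a ``different graphop'' piece handled by o-convergence, with the flow-argument discrepancy absorbed via the Lipschitz bound on the flow --- is the same as the paper's, which splits into three terms $\lambda_1,\lambda_2,\lambda_3$ and applies Gr\"onwall twice. The genuine gap is in your treatment of the graphop-difference term. You write it as $C\bigl(A^n g^n_{s,u}(y)-Ag^n_{s,u}(y)\bigr)$ with $g^n_{s,u}$ built from the $n$-dependent measures $\mu^{n,z}_s$, and then invoke Remark \ref{rem: pointwise convergence and graphop convergence}(iii). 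This fails for two reasons. First, o-convergence asserts $A^nf(x)\to Af(x)$ for each \emph{fixed} admissible $f$; it does not yield $A^nf_n(x)-Af_n(x)\to 0$ along a sequence $f_n$ varying with $n$, even if the $f_n$ are uniformly bounded --- controlling the cross terms would require either a uniform bound on the operators $A^n$ (not assumed; only $\parallel A\parallel_{1\to q}<\infty$ is hypothesized) or uniformity of the convergence $A^nf\to Af$ over a compact family of test functions, which pointwise o-convergence does not provide. Second, the hypothesis that $x\mapsto\mu^x_t(S)$ is bounded with $m$-null discontinuity set is assumed only for the \emph{limit} solution $\mu$, not for $\mu^n$, so you cannot even verify the hypotheses of Remark \ref{rem: pointwise convergence and graphop convergence}(iii) for your $g^n_{s,u}$.

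The paper resolves this by arranging the estimate so that the graphop-difference term compares $V[\mathcal{A}^n,\bar\mu,y]$ with $V[\mathcal{A},\bar\mu,y]$ for the \emph{fixed} limit $\bar\mu$. That term is bounded pointwise, uniformly in the evaluation point, by $2\,d_{BL}(\mathcal{A}^n\mu^y_s,\mathcal{A}\mu^y_s)$; the continuity hypothesis on $\mu$ together with o-convergence gives $\mathcal{A}^n\mu^y_s(S)\to\mathcal{A}\mu^y_s(S)$ for sets with null boundary, Portmanteau upgrades this to weak convergence, the equivalence of the L\'evy--Prokhorov and bounded-Lipschitz metrics gives $d_{BL}\to 0$ for a.e.\ $y$ and every $s$, and dominated convergence over $(v,y,s)$ finishes. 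Note that keeping the integrals rather than taking $\sup_{s,u}$ also dissolves the uniformity-in-$(s,u)$ issue you flagged: no equicontinuity argument is needed. The remaining ``measure-difference'' term is then estimated with the graphop $\mathcal{A}$ (for which $\parallel A\parallel_{1\to q}<\infty$ holds) via Lemma \ref{lem: estimations for varying measure for 1-1 bounded graphops}, and the flow-difference term via Lemma \ref{lem: regularity of V}, feeding back into Gr\"onwall exactly as you propose. So the repair is to move the fixed measure $\mu$, not $\mu^n$, into the graphop-difference term; as written, your argument does not close.
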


\begin{proof}
	The proof starts as in \cite[Lemma 2.7]{Medvedev2018} and then uses a new argument. Following the same steps as in the proof of Theorem \ref{thm: existence and uniqness of solutions for the fixed point eq} (equation (\ref{eq: thm: A is a constaction, 1})) we can show that 
	\begin{align*}
	\bar{d}^{b,m}(\bar{\mu}_t,\bar{\mu}^n_t) &\leq \int_\Omega \int_{\mathbb{T}} | T^y_{t,0}[\mu]v - T^y_{s,0}[\mu^n]v |~\txtd\mu_0^y(v) ~\txtd m(y)=: \lambda(t) \\
	&\leq  \int_0^t\int_\Omega \int_{\mathbb{T}} | V[\mathcal{A},\bar{\mu},y] (T_{s,0}^y[\mathcal{A},\bar{\mu}]v,s) -V[\mathcal{A}^n,\bar{\mu}^n,y] (T_{s,0}^y[\mathcal{A}^n,\bar{\mu}^n]v,s) |~\txtd\mu_0^y(v) ~\txtd m(y) ~\txtd s.  
	\end{align*}	
	Using the triangle inequality we have
	\begin{equation*}
	\lambda(t) \leq \lambda_1(t) + \lambda_2(t) + \lambda_3(t),
	\end{equation*}
	with 
	\begin{align*}
	\lambda_1(t) &= \int_0^t\int_\Omega \int_{\mathbb{T}} | V[\mathcal{A},\bar{\mu},y] (T_{s,0}^y[\mathcal{A},\bar{\mu}]v,s) -V[\mathcal{A},\bar{\mu}^n,y] (T_{s,0}^y[\mathcal{A},\bar{\mu}]v,s) |~\txtd\mu_0^y(v) ~\txtd m(y) ~\txtd s, \\
	\lambda_2(t) &= \int_0^t\int_\Omega \int_{\mathbb{T}} | V[\mathcal{A},\bar{\mu}^n,y] (T_{s,0}^y[\mathcal{A},\bar{\mu}]v,s) -V[\mathcal{A}^n,\bar{\mu}^n,y] (T_{s,0}^y[\mathcal{A},\bar{\mu}]v,s) |~\txtd\mu_0^y(v) ~\txtd m(y) ~\txtd s, \\
	\lambda_3(t) &= \int_0^t \int_\Omega \int_{\mathbb{T}} | V[\mathcal{A}^n,\bar{\mu}^n,y] (T_{s,0}^y[\mathcal{A},\bar{\mu}]v,s) -V[\mathcal{A}^n,\bar{\mu}^n,y] (T_{s,0}^y[\mathcal{A}^n,\bar{\mu}^n]v,s) |~\txtd\mu_0^y(v) ~\txtd m(y) ~\txtd s.
	\end{align*}	
	For the first term we obtain, using Lemma \ref{lem: estimations for varying measure for 1-1 bounded graphops},
	\begin{align*}
	\lambda_1(t) &\leq 2Cb \parallel A \parallel_{1 \to q} \int_0^t\int_\Omega d_{BL}(\mu_s^y,\mu_{s}^{n,y}) ~\txtd m(y) ~\txtd s =2Cb \parallel A \parallel_{1 \to q} \int_0^t\bar{d}^{b,m}(\bar{\mu}_s,\bar{\mu}^n_s) ~\txtd s.
	\end{align*}
	For the third we get by Lemma \ref{lem: regularity of V} that
	\begin{equation*}
	\lambda_3(t) \leq b \int_0^t\lambda(s)~\txtd s.
	\end{equation*}
	All in all, we find	
	\begin{equation*}
	\lambda(t) \leq 2Cb \parallel A \parallel_{1 \to q}\int_0^t\bar{d}^{b,m}(\bar{\mu}_s,\bar{\mu}^n_s) ~\txtd s +  \lambda_2(t) + b \int_0^t\lambda(s)~\txtd s.
	\end{equation*}
	Hence, by Gronwall's inequality (cf. Lemma \ref{lem: Gronwalls lemma}) we have
	\begin{equation*}
	\bar{d}^{b,m}(\bar{\mu}_t,\bar{\mu}^n_t)\leq \txte^{bt} \Big( C_1 \int_0^t  \bar{d}^{b,m}(\bar{\mu}_s,\bar{\mu}^n_s)  \txte^{-bs} ~\txtd s + \lambda_2(t) \Big),
	\end{equation*}
	with $C_1:= 2Cb \parallel A \parallel_{1 \to q}$. Defining $\phi(t):= \txte^{-bt}\bar{d}^{b,m}(\bar{\mu}_t,\bar{\mu}^n_t)$ and applying Gronwall's inequality for a second time we see that 
	\begin{equation*}
	\phi(t) \leq \txte^{C_1t} \lambda_2(t),
	\end{equation*}
	which implies that 
	\begin{equation}\label{eq: prop cont de on the graphop 1}
	\sup_{t \in \mathcal{T}} \bar{d}^{b,m}(\bar{\mu}^n_t,\bar{\mu}_t ) \leq \lambda_2(T)\txte^{(C_1 + b)T}.
	\end{equation}
	Thus, we have to deal with the term $\lambda_2(T)$. From $A^n \to_o A$ and
	Remark \ref{rem: pointwise convergence and graphop convergence}
	follows immediately that for any Borel set $S\subset \mathbb{T}$ with $\lambda(\partial S) = 0$ we have
	\begin{equation*}
	\mathcal{A}^n\mu^x_t(S) = \int_\Omega\mu^y_t(S) d\nu_x^n(y)  \to \mathcal{A}\mu^x_t(S) \quad  \text{ for all $t \in [0,T]$ and $m$-a.e. } x\in \Omega.
	\end{equation*}
	(Note that for this step we needed the a.e. continuity of the map $\Omega \to \mathbb{R}$, $x\mapsto\mu^x_t(S)$  for any Borel set $S\subset \mathbb{T}$ with $\lambda(S)=0$.) 
	By Portmanteau's theorem (see for instance \cite[Theorem 13.16]{KlenkeBook2014}) 
 this implies that 
	\begin{equation*}
	\mathcal{A}^n\mu^x_t \to_w \mathcal{A}\mu^x_t \quad  \text{ for all $t \in [0,T]$ and $m$-a.e. }  x\in \Omega.
	\end{equation*}
	Hence, due to the equivalence of the L\'evy-Prokhorov metric with the bounded Lipschitz distance~\cite{Gibs07} we have
	\begin{equation*}
	d_{BL}(\mathcal{A}^n\mu^x_t , \mathcal{A}\mu^x_t ) \to 0 \quad \text{ as $n\to \infty$ for all $t \in [0,T]$ and  $m$-a.e. } x\in \Omega,
	\end{equation*}
	which in turn implies by the dominated convergence theorem (applicable due to the compactness of $\Omega\times[0,T]$) that 
	\begin{equation*}
	\int_0^T\int_\Omega\int_\mathbb{T} d_{BL}(\mathcal{A}^n\mu^x_s , \mathcal{A}\mu^x_s) ~\txtd\mu_0^y(v)~\txtd m(y) ~\txtd s \to 0 \quad \text{ as } n\to \infty.
	\end{equation*}
	Thus, since
	\begin{align*}
	\lambda_2(t) &= \int_0^t\int_\Omega \int_{\mathbb{T}} | V[\mathcal{A}^n,\bar{\mu},y] (T_{s,0}^y[\mathcal{A}^n,\bar{\mu}^n]v,s) -V[\mathcal{A},\bar{\mu},y] (T_{s,0}^y[\mathcal{A}^n,\bar{\mu}^n]v,s) |~\txtd\mu_0^y(v) ~\txtd m(y) ~\txtd s\\
	&\leq  \int_0^T\int_\Omega \int_{\mathbb{T}} \underbrace{\Big|\int_\mathbb{T} D(\tilde{u} - T^y_{s,0}[\mathcal{A}^n,\bar{\mu}^n]v) d(\mathcal{A}^n\mu^y_s-\mathcal{A}\mu^y_s)(\tilde{u})\Big|}_{\leq 2 d_{BL}(\mathcal{A}^n\mu^y_s , \mathcal{A}\mu^y_s)}~\txtd\mu_0^y(v) ~\txtd m(y) ~\txtd s
	\end{align*}.
	we see that
	\begin{equation}\label{eq: prop cont de on the graphop 2}
	\lim_{n\to \infty} \lambda_2(T) = 0.
	\end{equation}	
	Combining (\ref{eq: prop cont de on the graphop 1}) with (\ref{eq: prop cont de on the graphop 2}) finishes the proof.
\end{proof}

Having completed all the necessary existence, uniqueness, and continuous dependence results, we now know that VFPEs involving graphops are suitably well-posed. The next step is to show that they are indeed mean-field limits for the generalized Kuramoto model on graphs.

\section{Graphon approximation of graphops on compact groups}
\label{sec: graphon approx of graphops on compact groups}

Our firs step is to find, for a given graphop $A$, a suitable graphon regularization $A^n$, which approximates $A$ in the sense of Definition \ref{defn: o-graphop conv}, where we defined our main new topology adapted to graphops. To find the right approximation via graphops, a key idea is to employ Fourier methods. We recall the notions we need briefly. 

A \textit{locally compact abelian (LCA) group} is an abelian group $G$ which is a locally compact Hausdorff space and such that the group operations are continuous (in other words, $G$ is topological group which is abelian, locally compact and Hausdorff). To be more precise, the maps
\begin{subequations}
	\begin{alignat*}{4}
	G\to G, &\quad x\mapsto -x, \\
	G\times G \to G &\quad (x,y) \to x+y,
	\end{alignat*}
\end{subequations}
are both continuous. Standard examples for LCA groups are $\mathbb{R}^d$ and $\mathbb{T}^d$ with the usual topologies and $(\mathbb{Z},+)$ with the discrete topology.

\begin{defn}
	A \textit{Haar measure} $\mu_G$ on a locally compact group $G$ is a positive, regular, Borel measure having the following two properties:\\
	\textbf{(i)} $\mu_G$ is finite on compact sets, i.e., we have
	\begin{equation*} 
	\mu_G(E) < \infty \quad \text{ if $E$ is compact};
	\end{equation*}
	\textbf{(ii)} $\mu_G$ is invariant under translation, i.e., we have
	\begin{equation*}
	\mu_G(x+E) = \mu_G(E) \text{\quad for all measurable $E \subset G$ and all $x\in G$.}
	\end{equation*}
\end{defn}

One can prove that the Haar measure always exists and is unique up to multiplication by a positive constant. One can also prove that the Haar measure is finite if and only if $G$ is compact, see \cite{Katznelson2004}. In the following, in the case that $G$ is a \textit{Compact Abelian (CA) group} we always assume that $\mu_G$ is the normalized probability measure. 

\begin{defn}\textbf{(summability kernel on LCA group)}\\
	A \textit{summability kernel} on the LCA group $G$ is a sequence $\{k_n\}_{n \in \mathbb{N}}$ satisfying the following conditions:\\
	\textbf{(1)} \begin{equation*}
	\int_{G} k_n(x) ~\txtd\mu_G(x) =1.
	\end{equation*}
	\textbf{(2)} \begin{equation*}
	\int_{\mathbb{T}} |k_n(x)|~\txtd\mu_G(x) \leq const.
	\end{equation*}
	\textbf{(3)} For any neighborhood $V$ of $0$ in $G$ we have
	\begin{equation*}
	\lim_{n\to \infty} \int_{G\setminus V}|k_n(x)| ~\txtd\mu_G(x) =0.
	\end{equation*}	
\end{defn}

Furthermore, we say that the summability kernel $\{k_n\}_{n \in \mathbb{N}}$ is:\\ 
\textbf{(i)} \textit{positive}, provided that $k_n(x) \geq 0$ for all $x$ and $n$. \\
\textbf{(ii)} \textit{symmetric}, provided that $k_n(x) = k_n(-x)$ for all $x \in G$. \\

Standard examples for symmetric and positive summability kernels are the Poisson and the Gauss kernels. We note that we can view any given summability kernel $k_n$ as a P-operator $K_n:L^\infty(G,\mu_G)\to L^1(G,\mu_G)$, with action given by the convolution
\begin{equation}
K_nf(x):= k_n*f(x) =  \int_Gk_n(x-y)f(y)~\txtd\mu_G(y). 
\end{equation} 
If moreover $\{k_n\}_{n \in \mathbb{N}}$ is a positive and symmetric summability kernel, then the function $W^n:G\times G \to \mathbb{R}$ given by $W^n(x,y):= k_n(x-y)$ is a graphon and the associated graphop $A_{W^n}: L^\infty(G,\mu_G) \to L^1(G,\mu_G)$ is given by the convolution
\begin{equation}
\label{eq: convolution with summability kernel}
A_{W^n}f(x) =\int_{\mathbb{T}} k_n(x-y)f(y)~\txtd\mu_G(y) = K_nf(x).
\end{equation}

\begin{thm}\textbf{(Approximation by summability kernels)}
	\label{thm: summability kernel = approximate identity}\\
	Let $\{k_n\}_{n \in \mathbb{N}}$ be a summability kernel on the CA group $G$. Then, for every $f\in L^p(G,\mu_G)$, $1 \leq p <\infty$, we have 
	\begin{equation}
	\parallel K_nf - f\parallel_{L^p} =0.
	\end{equation}
	Moreover, if $f\in C(G)$ then the convergence is uniform, i.e., 
	\begin{equation}
	\parallel K_nf - f\parallel_{\infty} =0.
	\end{equation}
\end{thm}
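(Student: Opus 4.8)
The plan is to prove the $L^p$ statement first and then derive the uniform statement for continuous $f$ as the easy special case (it is really the $p=\infty$ analogue but needs continuity since $C(G)$ is not dense in $L^\infty$ in norm). For the $L^p$ claim, the starting point is the identity $K_nf(x)-f(x)=\int_G k_n(y)\bigl(f(x-y)-f(x)\bigr)\,\txtd\mu_G(y)$, which uses property (1) of the summability kernel (the kernel integrates to $1$) together with the translation invariance of the Haar measure $\mu_G$ to rewrite the convolution. Then I would apply Minkowski's integral inequality to pull the $L^p$ norm inside the $y$-integral:
\begin{equation*}
\parallel K_nf-f\parallel_{L^p}\leq \int_G |k_n(y)|\,\parallel \tau_y f - f\parallel_{L^p}\,\txtd\mu_G(y),
\end{equation*}
where $\tau_y f(x):=f(x-y)$ denotes translation.

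The key analytic input is the \emph{continuity of translation} on $L^p(G)$ for $1\le p<\infty$: the map $y\mapsto \tau_y f$ is continuous from $G$ into $L^p(G)$, so in particular $\parallel \tau_y f - f\parallel_{L^p}\to 0$ as $y\to 0$. This is proved in the usual way by approximating $f$ in $L^p$ by a continuous function of compact support (here $G$ is compact, so just by $C(G)$, which is dense in $L^p$ since $\mu_G$ is a finite regular Borel measure) and using uniform continuity of that approximant together with finiteness of $\mu_G$. Granting this, fix $\eps>0$ and choose a neighborhood $V$ of $0$ with $\parallel \tau_y f - f\parallel_{L^p}<\eps$ for $y\in V$. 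Split the integral above into the part over $V$ and the part over $G\setminus V$. On $V$ the integrand is bounded by $\eps\,|k_n(y)|$, which integrates to at most $\eps\cdot\mathrm{const}$ by property (2). On $G\setminus V$ we bound $\parallel\tau_y f - f\parallel_{L^p}\le 2\parallel f\parallel_{L^p}$ and use property (3) to make $\int_{G\setminus V}|k_n(y)|\,\txtd\mu_G(y)\to 0$. Hence $\limsup_n\parallel K_nf - f\parallel_{L^p}\le \eps\cdot\mathrm{const}$, and since $\eps$ was arbitrary the limit is $0$.

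For the uniform statement, assume $f\in C(G)$. Since $G$ is compact, $f$ is uniformly continuous, so given $\eps>0$ there is a neighborhood $V$ of $0$ with $|f(x-y)-f(x)|<\eps$ for all $x\in G$ and all $y\in V$. Using the same pointwise identity $K_nf(x)-f(x)=\int_G k_n(y)(f(x-y)-f(x))\,\txtd\mu_G(y)$ and splitting over $V$ and $G\setminus V$ exactly as before, the $V$-part is bounded by $\eps\cdot\mathrm{const}$ uniformly in $x$ (property (2)) and the $(G\setminus V)$-part by $2\parallel f\parallel_\infty\int_{G\setminus V}|k_n(y)|\,\txtd\mu_G(y)\to 0$ (property (3)), again uniformly in $x$. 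Taking the supremum over $x$ gives $\limsup_n\parallel K_nf - f\parallel_\infty\le \eps\cdot\mathrm{const}$, so $\parallel K_nf-f\parallel_\infty\to 0$.

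The main obstacle — or rather the one genuinely non-formal ingredient — is the continuity of translation in $L^p$; everything else is a mechanical split-and-estimate using the three defining properties of a summability kernel. One small point to be careful about: the statement as displayed reads ``$\parallel K_nf-f\parallel_{L^p}=0$'' where clearly ``$\lim_{n\to\infty}$'' is intended, so I would phrase the conclusion as $\lim_{n\to\infty}\parallel K_nf-f\parallel_{L^p}=0$ (and likewise for the uniform norm). I should also note that in property (2) the domain of integration is written as $\mathbb{T}$ rather than $G$; I will read it as $G$, since that is what the proof needs and what the definition evidently means.
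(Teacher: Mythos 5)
Your proof is correct and is exactly the classical argument; the paper does not prove this theorem itself but simply cites it as a standard result in harmonic analysis (Katznelson, Chapter 7.2, Theorem 2.11), and your write-up — the identity from kernel normalization and translation invariance of Haar measure, Minkowski's integral inequality, continuity of translation in $L^p$, and the split over $V$ and $G\setminus V$ using properties (2) and (3) — is precisely the proof found in that reference. Your two side remarks (the missing $\lim_{n\to\infty}$ in the statement and the typo $\mathbb{T}$ for $G$ in property (2) of the definition) are both correct readings of evident typos in the paper.
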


\begin{proof}
	This is a classical result in harmonic analysis. See for example \cite[Chapter 7.2, Theorem 2.11]{Katznelson2004} and references therein.
\end{proof}

\begin{lem}\textbf{(graphops preserve uniform convergence of continuous functions)} 
	\label{lem: graphops preserve uniform convergence}\\
	Let $A: L^\infty(G,\mu_G)\to L^1(G,\mu_G)$ be a graphop with $\gamma_A<\infty$. 	
	Then, for any sequence $\{f_n\}_{n\in \mathbb{N}}$  and any $f$,  with $f_n\in C(G)$ we have
	\begin{equation*}
	\lim_{n\to \infty } \parallel f_n-f \parallel_{\infty} =0 \quad\Rightarrow \quad \lim_{n\to \infty } \parallel Af_n- Af \parallel_{\infty} =0.
	\end{equation*}
\end{lem}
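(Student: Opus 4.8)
The plan is to show that the operator $A$, viewed as a map $C(G) \to L^\infty(G,\mu_G)$, is bounded with operator norm at most $\gamma_A$, so that uniform convergence $f_n \to f$ transfers directly to $Af_n \to Af$ in the uniform norm.

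First I would recall from Theorem~\ref{thm: measure repres of graphops} that for $m$-a.e.~$x\in G$ we have the fiber representation $(Ag)(x) = \int_G g(y)~\txtd\nu_x^A(y)$ for every $g \in L^\infty(G,\mu_G)$, and that $\nu_x^A(G) \le \gamma_A$ for all such $x$ by the definition of $\gamma_A = \sup_x \nu_x^A(G)$. Hence for any $g \in C(G) \subset L^\infty(G,\mu_G)$ and $m$-a.e.~$x$,
\begin{equation*}
|(Ag)(x)| = \Big| \int_G g(y)~\txtd\nu_x^A(y) \Big| \le \|g\|_\infty \, \nu_x^A(G) \le \gamma_A \|g\|_\infty,
\end{equation*}
so that $\|Ag\|_\infty \le \gamma_A \|g\|_\infty$. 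Applying this to $g = f_n - f$ (which lies in $C(G)$, since each $f_n \in C(G)$ and the uniform limit $f$ of continuous functions is continuous) gives
\begin{equation*}
\|Af_n - Af\|_\infty = \|A(f_n - f)\|_\infty \le \gamma_A \|f_n - f\|_\infty \xrightarrow[n\to\infty]{} 0,
\end{equation*}
using linearity of $A$ and the hypothesis $\gamma_A < \infty$.

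I do not anticipate a serious obstacle here; the only mild subtlety worth stating explicitly is that $f = \lim_n f_n$ is indeed continuous, so that both $Af_n$ and $Af$ are defined via the fiber measures and the $\infty$-norm estimate applies to their difference. One should also note that the bound $|(Ag)(x)| \le \gamma_A\|g\|_\infty$ holds only $m$-a.e., but this is exactly what is meant by $\|Ag\|_\infty$, so no issue arises. This completes the proof.
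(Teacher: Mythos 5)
Your proof is correct and follows essentially the same route as the paper's: both use the fiber-measure representation $(Ag)(x)=\int_G g(y)\,\txtd\nu_x^A(y)$ to bound $\|A(f_n-f)\|_\infty$ by $\gamma_A\|f_n-f\|_\infty$, noting that the uniform limit $f$ is continuous. No issues.
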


\begin{proof}
	Due to uniform convergence, $f$ is continuous.
	Thus, since $f_n ,f \in C(G)$ we have that 
	\begin{equation*}
	|f_n(x) - f(x)| \leq \parallel f_n - f \parallel_\infty \quad \text{ for all } x \in G. 
	\end{equation*}
	Thus,
	\begin{align*}
	\parallel	A f^n - A f\parallel_\infty &= \sup_{x \in \Omega}\Big|\int_\Omega (f^n(y) - f(y))
	~\txtd\nu_x(y) \Big|\\
	&\leq \parallel f^n - f\parallel_\infty \underbrace{\sup_{x\in G} \nu_x(G)}_{=\gamma_A} \\
	&\to 0 \quad \text{ as } n\to \infty,
	\end{align*}
	and the result follows.	
\end{proof}

\begin{prop}\textbf{(o-graphon approximability for graphops on  $L^\infty(\Omega,m)$)} 
	\label{prop: o graphon approx using summabi kernels}\\
	Assume that $A: L^\infty(G,\mu_G)\to L^1(G,\mu_G)$ is a graphop on the CA group $G$ with Haar measure $\mu_G$. Assume that $\gamma_A< \infty$ and $\{k_n\}_{n \in \mathbb{N}}$ is a positive and symmetric summability kernel. Then, the regularization $K_nAK_n$ defines a sequence of graphons such that 
	\begin{equation*}
	K_n A  K_n \to_o A.
	\end{equation*}
\end{prop}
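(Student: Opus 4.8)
The plan is to show that the operator $K_nAK_n$ is a graphon for each $n$, and then to verify the $o$-convergence $K_nAK_n \to_o A$ by checking the equivalent criterion in Remark~\ref{rem: pointwise convergence and graphop convergence}(i), namely that $K_nAK_nf(x) \to Af(x)$ $m$-a.e.\ (in fact everywhere) for every $f\in C(G)$. First I would record that each $K_n$, being convolution with a positive symmetric summability kernel, is itself a $c_n$-regular graphop with $c_n = \int_G k_n\,\txtd\mu_G = 1$ by property~(1); in particular $\gamma_{K_n}=1$. Since $K_n$ is self-adjoint and positivity preserving, the composition $K_nAK_n$ is again self-adjoint (because $(K_nAK_n)^* = K_n^*A^*K_n^* = K_nAK_n$) and positivity preserving (composition of positivity-preserving maps), so it is a graphop. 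To see it is actually a \emph{graphon}, note that convolution with $k_n$ smooths any $L^1$ function into a continuous (indeed, as regular as $k_n$) function, so the kernel of $K_nAK_n$ can be written explicitly: one has $K_nAK_nf(x) = \int_G\int_G k_n(x-z)\,\txtd\nu_z(w)\,(k_n*f)(w)\,\txtd\mu_G(z)$, and unwinding the convolutions and using the fiber-measure representation together with Fubini shows $K_nAK_nf(x) = \int_G W^n(x,y)f(y)\,\txtd\mu_G(y)$ with $W^n(x,y) = \int_{G^2} k_n(x-z)k_n(y-w)\,\txtd\nu(z,w)$, which is symmetric (by symmetry of $k_n$ and of $\nu$) and nonnegative and bounded (by $\gamma_A \cdot \sup k_n^2 \cdot$ const, using property~(2) and that $k_n$ is bounded for the standard kernels); hence $W^n$ is a graphon kernel.

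Next I would establish the convergence. Fix $f \in C(G)$. By Theorem~\ref{thm: summability kernel = approximate identity}, $K_nf \to f$ uniformly, since $f$ is continuous. Then by Lemma~\ref{lem: graphops preserve uniform convergence} (applicable since $\gamma_A < \infty$), $A(K_nf) \to Af$ uniformly. Now $Af \in L^1(G,\mu_G)$ but need not be continuous, so I cannot directly reapply the uniform statement of Theorem~\ref{thm: summability kernel = approximate identity} to the outer $K_n$; instead I split
\[
\|K_nAK_nf - Af\|_{L^1} \le \|K_n(AK_nf - Af)\|_{L^1} + \|K_n(Af) - Af\|_{L^1}.
\]
The second term tends to $0$ by the $L^1$ statement of Theorem~\ref{thm: summability kernel = approximate identity}. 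For the first term, $\|K_n g\|_{L^1} \le \big(\int_G |k_n|\,\txtd\mu_G\big)\|g\|_{L^1} \le \mathrm{const}\cdot\|g\|_{L^1}$ by property~(2) and Young's inequality, so it is bounded by $\mathrm{const}\cdot\|AK_nf - Af\|_{L^1} \le \mathrm{const}\cdot\|AK_nf-Af\|_\infty \to 0$ by the previous paragraph. Hence $K_nAK_nf \to Af$ in $L^1(G,\mu_G)$, and passing to a subsequence we get $m$-a.e.\ convergence; since this holds for every subsequence the full sequence converges $m$-a.e. By Remark~\ref{rem: pointwise convergence and graphop convergence}(i) this is exactly $K_nAK_n \to_o A$.

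\textbf{The main obstacle} I anticipate is the asymmetry between the two applications of the summability kernel: the inner $K_n$ acts on a continuous function and can be handled by the \emph{uniform} approximation theorem plus Lemma~\ref{lem: graphops preserve uniform convergence}, but the outer $K_n$ must act on $Af$, which is merely $L^1$, so only the $L^p$ part of Theorem~\ref{thm: summability kernel = approximate identity} is available there, and one must be careful that $o$-convergence is an a.e.\ statement while $L^1$ convergence only gives a.e.\ convergence along a subsequence — the standard ``every subsequence has a further a.e.-convergent sub-subsequence with the same limit'' argument closes this gap. A secondary technical point is the genuine verification that $K_nAK_n$ has an absolutely continuous (w.r.t.\ $\mu_G$) fiber-measure representation with a bona fide graphon kernel $W^n$; this uses that $\nu$ is a finite measure together with the boundedness of $k_n$, and the Fubini manipulations should be written out carefully but present no conceptual difficulty.
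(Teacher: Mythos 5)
Your overall architecture coincides with the paper's: the kernel $W^n(x,y)=\int_{G^2}k_n(x-z)k_n(y-w)\,\txtd\nu(z,w)$ obtained via Fubini, and the chain ``$K_nf\to f$ uniformly (Theorem \ref{thm: summability kernel = approximate identity}), hence $AK_nf\to Af$ uniformly (Lemma \ref{lem: graphops preserve uniform convergence})'' are exactly the steps the paper takes. You have also correctly isolated the one genuinely delicate point, namely that the outer $K_n$ acts on $Af$, which is a priori only an $L^1$ function; the paper's own display, $|K_nAK_nf(x)-Af(x)|\le\|AK_nf-Af\|_\infty\int_G|k_n(x-y)|\,\txtd\mu_G(y)$, in effect only bounds $|K_n(AK_nf-Af)(x)|$ and silently drops the remainder $|K_n(Af)(x)-Af(x)|$ that you are worried about.

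However, your resolution of that point contains a genuine error. From $\|K_nAK_nf-Af\|_{L^1}\to 0$ you extract an $m$-a.e.\ convergent subsequence and then assert that ``since this holds for every subsequence the full sequence converges $m$-a.e.'' That inference is false: the subsequence principle (every subsequence has a further subsequence converging to the same limit implies convergence of the whole sequence) is valid only for modes of convergence induced by a topology, and almost-everywhere convergence is not such a mode. What you actually obtain is convergence in measure, which is strictly weaker; the classical ``typewriter'' sequence of indicator functions converges to $0$ in $L^1$ yet fails to converge at every single point. Since o-convergence, via Remark \ref{rem: pointwise convergence and graphop convergence}(i), requires $m$-a.e.\ convergence of the full sequence $K_nAK_nf(x)$, your argument does not establish the proposition as stated. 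To close the gap one needs either (a) an almost-everywhere approximation theorem $K_ng\to g$ a.e.\ for $g\in L^1$, which does not follow from the three defining properties of a summability kernel alone and requires a maximal-function estimate for the particular kernels used, or (b) the additional hypothesis that $A$ maps $C(G)$ into $C(G)$ (the continuity property (\ref{eq: cont. pproperty for graphop A}), which the paper does impose later in Section \ref{sec: mean field approximation}), in which case $Af\in C(G)$ and Theorem \ref{thm: summability kernel = approximate identity} yields $K_n(Af)\to Af$ uniformly, completing the proof. As written, both your second error term and the paper's one-line estimate leave this step open.
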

\begin{proof}
	Let $f:G\to \mathbb{R}$ be any continuous function. 
	Using Fubini's theorem we compute that
	\begin{align*}
	K_n A  K_n f(x) &= \int_{G}k_n(x-y) \int_{G}\int_{G}k_n(\hat{z}-z)f(z)d\mu_G(z)~\txtd\nu_y(\hat{z})d\mu_G(y) \\
	&= \int_{G} \underbrace{\int_{G} k_n(x-y)\int_{G}k_n(\hat{z}-z)~\txtd\nu_y(\hat{z})d\mu_G(y)}_{=: W^n(x,z)} f(z) d\mu_G(z). 
	\end{align*}
	Since the kernel $W^n$ can be rewriten as
	\begin{equation}\label{eq: graphon approximation using summability kernel}
	W^n(x,z) = \int_{G}\int_{G} k_n(x-y)k_n(z-\hat{z})~\txtd\nu(y,\hat{z}),
	\end{equation}
	which is a symmetric, bounded and positive function, we have that $K_n A  K_n$ is for all $n \in \mathbb{N}$ a graphon. By Theorem \ref{thm: summability kernel = approximate identity} we have that $K_nf\to f$ uniformly which implies by Lemma \ref{lem: graphops preserve uniform convergence} that $AK_nf \to Af$ uniformly, since $K_nf$ is continuous.  Thus, we have for any $x\in G$
	\begin{equation*}
	|K_nA K_n f(x) - Af(x)| \leq \parallel A^nf - Af\parallel_{\infty}\int_\mathbb{T}|k_n(x-y)|d\mu_G(y) = \parallel A^nf - Af\parallel_{\infty} \to 0.
	\end{equation*}
%
%
	The claim follows now by Remark \ref{rem: pointwise convergence and graphop convergence}.
\end{proof}

\section{Mean field Approximation}
\label{sec: mean field approximation}

We now have everything we need to solve the main problem, which is to show that the VFPE (\ref{eq: IVP for the VFPE^infty}) with the graphop $A$ approximates the discrete Kuramoto problem (\ref{eq: Kuramoto problem graphon approximable graphops}). 

We come back to the setup of Section~\ref{sec: main problem}. For the compact Borel probability space $(\Omega,\Sigma,m)$ we assume additionaly that the topology is induced by a metric and that $\Omega=G$ is a CA group and $ m= \mu_G$ is the Haar probability measure. $A \in \mathcal{G}^{m}$ is assumed to be a graphop 
$A: L^\infty(G, \mu_G) \to L^1(G, \mu_G)$ with $\parallel A \parallel_{1\to q} < \infty$ for a $q\in [1,\infty]$. Recall that by Proposition \ref{prop: o graphon approx using summabi kernels} there exists a sequence of graphons $A^K: L^\infty(G, \mu_G) \to L^1(G, \mu_G)$ with graphon kernels $W^K$, such that $A^K\to_o A$. We assume that the weigths $A^{N,K}$ in (\ref{eq: weights Kurmato on T}) are given by these kernels.
We assume furthermore that the graphop $A$ satisfies the continuity property  (\ref{eq: cont. pproperty for graphop A}).
Recall that for any $n, M\in \mathbb{N}$ we use the notation $N:=nM$ and we have that $\Omega^N_{(i-1)M + k} \subset \Omega^n_i$ for all  $i \in [n], k \in [M]$ and $m(\Omega^n_i) = \frac{1}{n}$.
Further we assume for  the initial values $u^{N,K}_{(j-1)M +l}(0) = u^{0}_{n, jl} , j\in [n], l\in [M]$ that for every $j\in \mathbb{N}$,  $u^{0}_{n, jl}, l\in \mathbb{N}$ are independent identically distributed continuous random variables with density $\rho^0_{n,j}(v)= n \int_{\Omega^n_j} \rho^0(v,x)dm(x)$. Here, for the initial condition $\rho^0$ we assume that 
\begin{equation}
\rho^0(u,x) \in L^1(\mathbb{T} \times \Omega), \quad \rho^0 \geq 0, \quad \int_\mathbb{T} \rho^0(u,x) du = 1 \text{ $m$-a.e. $x\in \Omega$}, \quad  \rho^0(u,\cdot) \in C(\Omega).
\end{equation}
Finally (as in Section 3.3 of \cite{Medvedev2018}) we define the probability space $ \Big(X^{(n)}:= (\mathbb{R}^\infty)^n, \mathcal{F}^{(n)} = \mathcal{B}(\mathbb{R}^\infty)^n, \mathbb{P}_0^{(n)} \Big)$ with the product measure $\mathbb{P}_0^n = \prod_{j \in \mathbb{N}}\rho^0_{n,j}dm$.
\begin{thm}\textbf{(VFPE approximates the discrete Kuramoto's model)}
	\label{thm: vfpe approximates the discrete Kuramoto's problem for graphon approximable graphops}\\
	Under the previous assumptions, for any given $\epsilon>0$ there exists a $K_1 \in \mathbb{N}$ such that for any $K\geq K_1$ there exist $N_1(K,\epsilon)$ such that for all 
	$n \geq N_1(K)$ there exist $\mathcal{U}^{(n)} \in \mathcal{F}^n, \mathbb{P}_0^{(n)}(\mathcal{U}^{(n)})=1$ and $M_1= M_1(\epsilon, n_1, K)$ we have
	\begin{equation}
	\label{eq: statemnt in thm vfpe approximates the discrete Kuramoto's problem}
	\sup_{t\in \mathcal{T}} \bar{d}^{b,m}(\bar{\nu}_{n,M,K,t}, \bar{\nu}_t)< \epsilon
	\end{equation}
	for every $M \geq M_1$ and $u^0 \in \mathcal{U}^{(n)}$.	
\end{thm}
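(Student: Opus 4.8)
The plan is to realize the dashed arrow of the diagram in Section~\ref{sec: main problem} by composing the two solid arrows: one inserts the intermediate measure family $\bar{\nu}^K$ associated with VFPE$^K$ (i.e.\ $\bar{\nu}^{K,x}_t(S)=\int_S\rho^K(t,u,x)\,\txtd u$, cf.~\eqref{eq: cont. measure rK sec graphon approxm}) and estimates
\[
\bar{d}^{b,m}(\bar{\nu}_{n,M,K,t},\bar{\nu}_t)\le\bar{d}^{b,m}(\bar{\nu}_{n,M,K,t},\bar{\nu}^K_t)+\bar{d}^{b,m}(\bar{\nu}^K_t,\bar{\nu}_t),
\]
using the triangle inequality for the pseudometric $\bar{d}^{b,m}$; the first term will be made small by the graphon mean-field theorem (the $N\to\infty$ leg) and the second by the continuous dependence result Proposition~\ref{prop: continuous dependence on the graphop II} (the $K\to\infty$ leg). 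Throughout one works with $b=1$, which is admissible because $\int_{\mathbb{T}}\rho^0(u,x)\,\txtd u=1$ forces every $\mu_t^x$ and every empirical measure $\nu^x_{n,M,K,t}$ to be a probability measure.

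First I would verify the hypotheses needed to run the $K\to\infty$ leg. The initial family $\bar{\mu}_0$ with $\txtd\mu_0^x(u)=\rho^0(u,x)\,\txtd u$ lies in $\bar{\mathcal{M}}^{1,2}$: by the assumptions $\rho^0(u,\cdot)\in C(\Omega)$, $\rho^0\in L^1(\mathbb{T}\times\Omega)$, $\rho^0\ge 0$, the map $x\mapsto\mu_0^x(S)=\int_S\rho^0(u,x)\,\txtd u$ is continuous for every Borel $S\subset\mathbb{T}$. Since $A$ satisfies the continuity property~\eqref{eq: cont. pproperty for graphop A}, Corollary~\ref{cor: conti of the solut of the fixed point eq in x} then gives that the fixed-point solution associated with VFPE$^\infty$ lies in $\mathcal{M}^{1,2}_{\mathcal{T}}$, so $x\mapsto\mu_t^x(S)=\nu_t^x(S)$ is continuous for every $t\in\mathcal{T}$ and every $S$ with $\lambda(\partial S)=0$ — exactly the regularity of the limit solution demanded by Proposition~\ref{prop: continuous dependence on the graphop II}, with empty discontinuity set. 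On the approximation side, Proposition~\ref{prop: o graphon approx using summabi kernels} provides graphons $A^K$ with $A^K\to_o A$, and each $A^K\in\mathcal{G}^m$: it is a graphop by Proposition~\ref{prop: o graphon approx using summabi kernels}, and $\gamma_{A^K}=\sup_{x}(A^K\chi_\Omega)(x)=\sup_{x}(K_nA\chi_\Omega)(x)\le\gamma_A\le 1$ because $K_n$ is positivity preserving with $K_n\chi_\Omega=\chi_\Omega$. Hence for each $K$ the IVP VFPE$^K$ has a unique weak solution $\rho^K$ by Theorem~\ref{thm: existence and uniqness for weak solutions of the VFPE with graphops}.

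Now fix $\epsilon>0$ and carry out the two legs. Applying Proposition~\ref{prop: continuous dependence on the graphop II} to the sequence $A^K\to_o A$ (its remaining hypothesis $\|A\|_{1\to q}<\infty$ is assumed) yields $\sup_{t\in\mathcal{T}}\bar{d}^{b,m}(\bar{\nu}^K_t,\bar{\nu}_t)\to 0$ as $K\to\infty$, so there is $K_1\in\mathbb{N}$ with $\sup_{t\in\mathcal{T}}\bar{d}^{b,m}(\bar{\nu}^K_t,\bar{\nu}_t)<\epsilon/2$ for all $K\ge K_1$. Fixing any $K\ge K_1$, the object $A^K$ is a genuine graphon on $\Omega$, so the graphon mean-field approximation of~\cite{Medvedev2018}, in the form generalized to the node space $\Omega$ (cf.\ Remark~\ref{rem: graphons on Omega}) and applied to the discrete Kuramoto model~\eqref{eq: Kuramoto problem graphon approximable graphops} with the prescribed i.i.d.\ random initial data, furnishes $N_1(K,\epsilon)$ such that for every $n\ge N_1(K)$ there is $\mathcal{U}^{(n)}\in\mathcal{F}^{(n)}$ with $\mathbb{P}_0^{(n)}(\mathcal{U}^{(n)})=1$ and $M_1=M_1(\epsilon,n,K)$ with $\sup_{t\in\mathcal{T}}\bar{d}^{b,m}(\bar{\nu}_{n,M,K,t},\bar{\nu}^K_t)<\epsilon/2$ for all $M\ge M_1$ and $u^0\in\mathcal{U}^{(n)}$; here one uses that $\bar{d}^{b,m}$ is an $m$-average of bounded-Lipschitz distances, hence dominated by the distance in which~\cite{Medvedev2018} states its convergence. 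Combining the two bounds via the triangle inequality for $\bar{d}^{b,m}$ gives $\bar{d}^{b,m}(\bar{\nu}_{n,M,K,t},\bar{\nu}_t)<\epsilon$ for all $t\in\mathcal{T}$, and taking the supremum over $t$ yields~\eqref{eq: statemnt in thm vfpe approximates the discrete Kuramoto's problem}.

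The genuinely hard analysis has already been done in Sections~\ref{sec:existence}--\ref{sec: graphon approx of graphops on compact groups}, so the main obstacle in this last step is organizational: arranging the nested quantifiers so that $K$ is chosen first (via continuous dependence), then $n$ (hence $N=nM$) depending on $K$, then $M$ depending on $(\epsilon,n,K)$ (via the graphon result), and matching this ordering precisely to the statement of~\cite{Medvedev2018}. The point that genuinely needs care is ensuring the regularity hypotheses of Proposition~\ref{prop: continuous dependence on the graphop II} are available — the $x$-continuity of $\nu_t^x(S)$ for the $K=\infty$ solution, which is exactly why $\bar{\mathcal{M}}^{b,2}$, Corollary~\ref{cor: conti of the solut of the fixed point eq in x} and the standing assumption~\eqref{eq: cont. pproperty for graphop A} were set up — and, secondarily, checking $\gamma_{A^K}\le 1$ so that the solution theory of Section~\ref{sec:existence} applies uniformly to the approximating graphons.
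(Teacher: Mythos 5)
Your proposal is correct and follows essentially the same route as the paper's proof: insert the intermediate measure $\bar{\nu}^K$, control the $K\to\infty$ leg by Proposition \ref{prop: continuous dependence on the graphop II} (with Corollary \ref{cor: conti of the solut of the fixed point eq in x} supplying the $x$-continuity hypothesis) and the $N\to\infty$ leg by the graphon mean-field theorem of \cite{Medvedev2018} generalized as in Remark \ref{rem: graphons on Omega}, then apply the triangle inequality for $\bar{d}^{b,m}$. Your additional verifications (that $\bar{\mu}_0\in\bar{\mathcal{M}}^{b,2}$ and that $\gamma_{A^K}\le\gamma_A\le 1$ via positivity and normalization of the summability kernel) are correct and merely make explicit what the paper leaves implicit.
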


\begin{proof} Let $\epsilon >0$. Since, by Proposition \ref{prop: o graphon approx using summabi kernels}, $A^K\to_o A$, by Proposition \ref{prop: continuous dependence on the graphop II} (cf. also Corollary \ref{cor: conti of the solut of the fixed point eq in x}) we can find a $K_1\in \mathbb{N}$ such that for all $K\geq K_1$ we have
	\begin{equation}
	\sup_{t\in \mathcal{T}} \bar{d}^{b,m}(\bar{\nu}^K_{t}, \bar{\nu}_t)< \frac{\epsilon}{2}.
	\end{equation}
	Furthermore, by \cite[Theorem 3.11]{Medvedev2018} (taking into account also Remark \ref{rem: graphons on Omega}) we can find for any $K \in \mathbb{N}$ an $N_1(K,\epsilon)$ such that for all 
	$n \geq N_1(K)$ there exist $\mathcal{U}^{(n)} \in \mathcal{F}^{(n)}, \mathbb{P}^{(n)}(\mathcal{U}^n)=1$ and $M_1= M_1(\epsilon, n_1, K)$ such that
	\begin{equation}
	\sup_{t\in \mathcal{T}} \bar{d}^{b,m}(\bar{\nu}_{n,M,K,t}, \bar{\nu}_t^K)< \frac{\epsilon}{2}
	\end{equation}
	for every $M \geq M_1$ and $u^0 \in \mathcal{U}^n$. 
	Thus, by the triangle inequality we have for all  $M\geq M_1(K), n \geq N_1(K)$: 
	\begin{equation}
	\sup_{t\in \mathcal{T}} \bar{d}^{b,m}(\bar{\nu}_{n,M,K,t}, \bar{\nu}_t) \leq \sup_{t\in \mathcal{T}} \bar{d}^{b,m}(\bar{\nu}_{n,M,K,t}, \bar{\nu}_t^K) + \sup_{t\in \mathcal{T}} \bar{d}^{b,m}(\bar{\nu}^K_{t}, \bar{\nu}_t) <
	\epsilon \quad { a.s. },
	\end{equation}	
	which finishes the proof.
\end{proof}

\begin{rem}\label{rem: graphons on Omega}
 \cite[Theorem 3.11]{Medvedev2018} is stated for graphons defined on $I$, with the special partition $\{0, \frac{1}{n}, \frac{2}{n},...,1\}$, but the argument used in the proof holds for arbitary graphons on the space $\Omega$ (in the sense of Definition \ref{def: graphons}) and for arbitary paritions as stated in our assumptions.
\end{rem}

A big class of graphops, which satisfy all conditions of the previous theorem are the $c$-regular graphops:

\begin{cor}\textbf{(Mean field approximation for c-regular graphops on $L^\infty(G, \mu_G)$)} 
	\label{cor: Mean field approximation for c-regular graphops}\\
	If $A: L^\infty(G, \mu_G) \to L^1(G, \mu_G)$ is a $c$-regular graphop with $c\leq 1$, which satisfies the continuity property  (\ref{eq: cont. pproperty for graphop A}), then the statement of Theorem \ref{thm: vfpe approximates the discrete Kuramoto's problem for graphon approximable graphops} is satisfied. Especially, the claim holds for any Markov graphop on $L^\infty(G, \mu_G)$ satisfying the continuity property (\ref{eq: cont. pproperty for graphop A}).
\end{cor}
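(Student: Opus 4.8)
The plan is to show that every $c$-regular graphop $A$ with $c\le 1$ that satisfies the continuity property (\ref{eq: cont. pproperty for graphop A}) meets \emph{all} the standing hypotheses on $A$ in Theorem \ref{thm: vfpe approximates the discrete Kuramoto's problem for graphon approximable graphops} --- namely $A\in\mathcal{G}^m$, $\parallel A\parallel_{1\to q}<\infty$ for some $q\in[1,\infty]$, and property (\ref{eq: cont. pproperty for graphop A}) --- and then to invoke that theorem directly. Property (\ref{eq: cont. pproperty for graphop A}) is part of the hypothesis, so nothing is needed there.

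First I would check membership in $\mathcal{G}^m$. By the definition of $c$-regularity, $A\chi_\Omega(x)=\nu_x^A(\Omega)=c$ for $m$-a.e.\ $x\in\Omega$ (recall that $A\chi_\Omega(x)=\nu_x^A(\Omega)$ is the degree of $x$), so $\gamma_A=\sup_{x\in\Omega}\nu_x^A(\Omega)=c\le 1$, whence $A\in\mathcal{G}^m$. In particular $\gamma_A<\infty$, so Proposition \ref{prop: o graphon approx using summabi kernels} applies and the regularizations $K_nAK_n$ form a sequence of graphons with $K_nAK_n\to_o A$; this is exactly the graphon approximation used at the start of the proof of Theorem \ref{thm: vfpe approximates the discrete Kuramoto's problem for graphon approximable graphops}.

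Next I would verify $\parallel A\parallel_{1\to q}<\infty$, and the natural choice is $q=1$. For $v\in L^\infty(\Omega,m)$, positivity preservation applied to $|v|\pm v\ge 0$ gives $|Av|\le A|v|$ $m$-a.e., and self-adjointness together with $A\chi_\Omega=c\chi_\Omega$ yields
\[
\parallel Av\parallel_1=\int_\Omega|Av|\,\txtd m\le\int_\Omega A|v|\,\txtd m=\langle A|v|,\chi_\Omega\rangle=\langle |v|,A\chi_\Omega\rangle=c\parallel v\parallel_1,
\]
so $\parallel A\parallel_{1\to 1}\le c\le 1$ (equivalently, $A$ extends to a bounded operator on $L^1(\Omega,m)$ of norm at most $c$). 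With $A\in\mathcal{G}^m$, $\parallel A\parallel_{1\to1}<\infty$, and (\ref{eq: cont. pproperty for graphop A}) all in hand, Theorem \ref{thm: vfpe approximates the discrete Kuramoto's problem for graphon approximable graphops} applies verbatim and gives the conclusion; the Markov case is simply $c=1$. I expect the only point requiring care is precisely this last verification: the continuous-dependence machinery (Lemma \ref{lem: estimations for varying measure for 1-1 bounded graphops} and Proposition \ref{prop: continuous dependence on the graphop II}) is phrased in terms of an $\parallel\cdot\parallel_{1\to q}$ bound rather than the always-available $\parallel\cdot\parallel_{\infty\to\infty}\le\gamma_A$, and the computation above shows that $q=1$ already works for \emph{every} $c$-regular graphop, so no extra structure on $A$ (such as a bounded graphon kernel) is needed.
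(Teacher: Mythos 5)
Your proposal is correct and follows essentially the same route as the paper: check that $c$-regularity gives $\gamma_A=c\leq 1$ (hence $A\in\mathcal{G}^m$), establish $\parallel A\parallel_{1\to 1}\leq c$, and then invoke Theorem \ref{thm: vfpe approximates the discrete Kuramoto's problem for graphon approximable graphops}. Your derivation of the $1\to 1$ bound via $|Av|\leq A|v|$ (from positivity preservation) followed by $\langle A|v|,\chi_\Omega\rangle=\langle |v|,A\chi_\Omega\rangle=c\parallel v\parallel_1$ is in fact a cleaner justification than the paper's, which asserts the corresponding identity for general $f\in L^1$ without this intermediate step.
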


\begin{proof}
	Since the graphop $A$ is $c$-regular, we check that for all $x\in \Omega$ we have
	\begin{equation*}
	\nu_x(G) = A\chi_G(x) = c,
	\end{equation*}
	which implies that
	\begin{equation*}
	\gamma_A = \sup_{x \in G} \nu_x^A(G) = c \leq 1.
	\end{equation*}
	Thus, $ A\in \mathcal{G}^m$. Further, see $A$ is self-adjoint, we have for any $f\in L^1(G, \mu_G)$:
	\begin{equation}
	\int_\Omega \Big|Af(x) \Big|~\txtd x = \int_G\Big| f(x) \underbrace{A\chi_G(x)}_{= c\chi_G} \Big|dx  = c \int_G \Big|f(x) \Big| ~\txtd x,
	\end{equation}	
	which implies that 
	\begin{equation*}
	\parallel A \parallel_{1\to 1} = c < \infty.
	\end{equation*}
	Thus, all assumptions in Theorem \ref{thm: vfpe approximates the discrete Kuramoto's problem for graphon approximable graphops} are satisfied. 
\end{proof}

A concrete example for a graphop satisfying the assumptions is the so called spherical graphop: 

\begin{ex} \textbf{(Kuramoto's model on the spherical graphop)} \\
	We consider the Borel probability space $\Omega:= \mathbb{S}^2:= \{(x,y,z) \in \mathbb{R}^3: x^2+y^2+z^2 =1\}$ with the	uniform measure $\mu$. The \textit{spherical graphop}, discussed in \cite{Scededgy2018}, is the graphop $S:L^\infty(\Omega,\mu) \to L^1(\Omega,\mu)$ given by
	\begin{equation}
	(Sf)(a) := \int_{S^a} f(b) ~\txtd\nu_a(b),
	\end{equation}
	where $S^a$ denotes the set of normalized vectors in $\mathbb{R}^3$, which are orthogonal to $a$ (which is a circle on the sphere) and $\nu_a$ is the uniform measure on $S^a$; note that we can also view $a$ as a normal vector corresponding to the circle $S^a$. In other words, the spherical graphop defines the graph for which any point on the sphere is connected only to its orthogonal vectors. Hence, every neighborhood is one-dimensional. The spherical graphop is Markov graphop, which is neither a graphon (dense graph) nor a graphing (sparse graph); it is a prototypical example for an infinite graph of intermediate density. It is not difficult to see that the sphere $\mathbb{S}^2$ with the usual topology and vector addition  is a CA group, and the Haar measure $\mu_{\mathbb{S}^2}$ is simply the uniform measure on $\mathbb{S}^2$. Hence, the spherical graphop  satisfies all assumptions of Corollary \ref{cor: Mean field approximation for c-regular graphops} (and Theorem \ref{thm: vfpe approximates the discrete Kuramoto's problem for graphon approximable graphops}).	
\end{ex}

\section{Conclusion and Outlook}\label{sec: conclusion and outlook}
\label{sec:conclusion}

Previous results on mean-field approximation for the discrete Kuramoto model on finite networks (\ref{eq: kuramotos model on graphs intro}) were restricted to the case that the limiting graph is a graphon, i.e., a dense structure. In this paper we have significantly generalized and extended them to the case that the finite graphs converge towards graphs of intermediate density, or even sparse graphs. Our approach was based upon the operator-functional representation of graphs via graphops provided recently in \cite{Scededgy2018}. Introducing tools from harmonic analysis we were able to approximate any graphop, defined on a CA group with the Haar measure, by graphons. Since for any graphon mean field approximation is guaranteed, we could then bypass working directly with the generalized Kuramoto model. With this idea we managed to prove mean field approximation for a big class of graphops defined on a CA group with the Haar measure, which contains any $c$-regular graphop and any Markov graphop. Furthermore, we showed existence, uniqueness, and continuous dependence on the graphop for the limiting PDE.

As already mentioned in the beginning, to simplify the notation and calculations, we have always assumed that in the discrete Kuramoto model  (\ref{eq: kuramotos model intro}) all initial frequencies $\omega_i$ are the same: $\omega_i = 0$, $\forall i\in [N]$, but all of our results extend to the general case of a frequency distribution. In this case, the Kuramoto model (\ref{eq: kuramotos model on graphs intro}) reads as
\begin{equation}
\dot{u}_i = \omega_i + \frac{C}{N}\sum_{j=1}^NA^N_{i,j}D(u_j - u_i), \quad i \in [N]:= \{1,2,...,N\}.
\end{equation} 
The simplification $\omega_i = 0$, $\forall i\in [N]$ had as a result that the equation of characteristics was defined on the one-dimensional compact space $\hat{G}= \mathbb{T}$. In the general case of of distributed frequencies $\{\omega_i\}_{i=1,...,N}$, we set $\hat{G} = \mathbb{T}\times \mathbb{R}$ and the characteristic field is given by 
\begin{equation*}
V[\mathcal{A},\mu,x](t,P):= \begin{pmatrix}\omega + C\int_{\hat{G}}g(\tilde{\omega})D(\tilde{u} - u)~\txtd(\mathcal{A}\mu_t)^x(\tilde{u})~\txtd\tilde{\omega}\\0\end{pmatrix}. 
\end{equation*}
Since the added component is simply $0$, the characteristic field $V$ again satisfies the regularity properties of Lemma \ref{lem: regularity of V}, which implies that we  obtain a regular flow $T_{t,t_0}[\mathcal{A},\mu,x] : \hat{G} \to \hat{G}$, and we can thus repeat the proof of existence and uniqueness of solutions for the fixed point equation (\ref{eq: fixed point eq assoc with the VFPE}) from Theorem \ref{thm: existence and uniqness of solutions for the fixed point eq}; see also \cite{Medv2016} and \cite[Section 4]{Medvedev2018}, where the additional frequencies effectively lead to one more integral but do not take any effect on the main argument of the proof for mean-field limits.

With our new view using graphops for large-scale dynamics on graphs, there are evidently now a lot of interesting open questions and challenges. We mention just a few open problem directly connected to our setting here:
\begin{itemize}
	\item Generalize our analysis, with the underlying assumption that the limiting graphop $A$ has bounded $(1,q)$ norm, to cover general $(p,q)$ bounded graphops. 
	\item Another limitation of our analysis is the assumption that the node space $\Omega$ is a CA group with the Haar measure, which means that the edges are distributed somewhat uniformly on the nodes. Hence, graphs with very inhomogeneous distribution of edges, like for instance star graphs with a ``giant'' node connected to every other node, are excluded from our analysis. One main goal for future research will be to allow such inhomogeneous  structures by considering the general case of any compact Borel probability space $(\Omega,\Sigma,m)$.
	\item One may also expect that further variations of the Kuramoto model on complex networks, e.g., the Kuramoto model involving second-order time derivatives~\cite{KuehnThrom2}, also have mean-field limits leading again to VFPEs on graphops.
	\item Even more generally, instead of Kuramoto-type models one could consider a completely abstract kinetic model of the form
	\begin{equation}
	\label{eq: kinetic model on graphs}
	\dot{u}_i = \sum_{j=1}^NA^N_{i,j}f(u_j, u_i), \quad i \in [N]:= \{1,2,...,N\},
	\end{equation}
	defined on finite networks which converge, as $N\to \infty$, towards a limiting graphop $A$. It is known that for the corresponding kinetic problem 
	\begin{equation}
	\dot{u}_i = \sum_{j=1}^Nf(u_j, u_i), \quad i \in [N]:= \{1,2,...,N\},
	\end{equation}
	the mean field VFPE reads as 
	\begin{equation}
	\partial{}_t \rho = - \partial{}_u\Big( \rho V[f](\rho)\Big),
	\end{equation}
	where the characteristic field $V[f]$ can be entirely computed from $f$ \cite{Golse2016}. As conjectured in~\cite{Kuehn2020}, the mean field VFPE for the kinetic model (\ref{eq: kinetic model on graphs}) should be given by
	\begin{equation}
	\partial{}_t \rho = - \partial{}_u\Big( \rho V[f](A\rho)\Big).
	\end{equation}
\end{itemize}

\medskip

\textbf{Acknowledgements:} MAG and CK gratefully thank the TUM International Graduate School of Science and Engineering (IGSSE) for support via the project ``Synchronization in Co-Evolutionary Network Dynamics (SEND)''. CK also acknowledges partial support by a Lichtenberg Professorship funded by the VolkswagenStiftung. MAG also thanks Xu Chuang for very helpful discussions and comments after reading the paper.

\bibliographystyle{plain}
\bibliography{bib1}

\begin{thebibliography}{10}

\bibitem{Scededgy2018}
A.~Backhausz and B.~Szegedy.
\newblock Action convergence of operators and graphs.
\newblock {\em arXiv:1811.00626}, 2018.

\bibitem{Crawford99}
J.~D. Crawford and K.T.R. Davies.
\newblock Synchronization of globally coupled phase oscillators: Singularities
  and scaling for general couplings.
\newblock {\em Physica D}, 125:1--46, 1999.

\bibitem{Medv(sparse)2014}
G.~S.~Medvedev D.~Kaliuzhnyi-Verbovetskyi.
\newblock The semilinear heat equation on sparse random graphs.
\newblock {\em SIAM Journal on Mathematical Analysis}, 49(2):1333–--1355,
  2014.

\bibitem{Medvedev2018}
G.~S.~Medvedev D.~Kaliuzhnyi-Verbovetskyi.
\newblock The mean field equation for the {Kuramoto} model on graph sequences
  with non-{Lipschitz} limit.
\newblock {\em SIAM Journal on Mathematical Analysis}, 50(3):2441–--2465,
  2018.

\bibitem{Golse2016}
F.~Golse.
\newblock On the dynamics of large particle systems in the mean field limit,
  macroscopic and large scale phenomena: coarse graining, mean field limits and
  ergodicity.
\newblock {\em Lect. Notes Appl. Math. Mech., Springer}, 3:1--144, 2016.

\bibitem{Medv2016}
G.~S.~Medvedev H.~Chiba.
\newblock The mean field analysis for the {Kuramoto} model on graphs {I. The}
  mean field equation and transition point formulas.
\newblock {\em Discrete and Continuous Dynamical Systems - A},
  39(1):131–--155, 2019.

\bibitem{Janson10}
S.~Janson.
\newblock Graphons, cut norm and distance, couplings and rearrangements.
\newblock {\em arXiv:1009.2376}, pages 1--72, 2010.

\bibitem{Katznelson2004}
Y.~Katznelson.
\newblock {\em An Introduction to Harmonic Analysis}.
\newblock Cambridge Mathematical Library, 2004.

\bibitem{KlenkeBook2014}
A.~Klenke.
\newblock {\em Probability Theory: A Comprehensive Course}.
\newblock Springer, 2014.

\bibitem{Kuehn2020}
C.~Kuehn.
\newblock Network dynamics on graphops.
\newblock {\em New Journal of Physics}, 22(5):053030, 2020.

\bibitem{KuehnThrom2}
C.~Kuehn and S.~Throm.
\newblock Power network dynamics on graphons.
\newblock {\em SIAM J. Appl. Math.}, 79(4):1271--1292, 2019.

\bibitem{Kuramoto75}
Y.~Kuramoto.
\newblock Self-entrainment of a population of coupled non-linear oscillators.
\newblock {\em Interational Symposium on Mathematical Problems in Theoretical
  Physics}, 39:420--422, 1975.

\bibitem{Lovasz2006}
B.~Szegedy L.~Lovasz.
\newblock Limits of dense graph sequences.
\newblock {\em J. Combin. Theory Ser. B}, 96(6):933–--957, 2006.

\bibitem{Lancellotti2015}
C.~Lancelotti.
\newblock On the {Vlasov} limit for systems of nonlinearly coupled oscillators
  without noise.
\newblock {\em Transport Theory and Statistical Physics}, 34(7):523--535, 2005.

\bibitem{Medv2014}
G.~S. Medvedev.
\newblock The nonlinear heat equation on {W}-random graphs.
\newblock {\em Arch. Rat. Mech. Anal.}, 121:781--803, 2014.

\bibitem{Neunzert1978}
H.~Neunzert.
\newblock Mathematical investigations on particle-in-cell methods.
\newblock {\em Fluid Dyn Trans}, 9:229--254, 1978.

\bibitem{Neunzert84}
H.~Neunzert.
\newblock An introduction to the nonlinear {Boltzmann-Vlasov} equation.
\newblock {\em Kinetic Theories and the Boltzmann Equation. Lecture Notes in
  Mathematics}, 1048, 1984.

\bibitem{Sakaguchi88}
H.~Sakaguchi.
\newblock Cooperative phenomena in coupled oscillator systems under external
  fields.
\newblock {\em Prog. Theor. Phys.}, 79:39--46, 1988.

\bibitem{Strogatz2000}
S.~H. Strogatz.
\newblock From {Kuramoto} to {Crawford}: exploring the onset of synchronization
  in populations of coupled oscillators.
\newblock {\em Phys. D}, 143(1--4):1--20, 2000.

\bibitem{Gibs07}
A.~L. Gibbs F.~E. Su.
\newblock On choosing and bounding probability metrics.
\newblock {\em International Statistical Review}, 70(3):413--435, 2007.

\end{thebibliography}

\section*{Appendix: Some technical results}
\label{sec: appendix, technical lemmas}

\begin{lem}\label{lem: db and db2 are equival metrics}
	$\bar{d}^{b}$ is a metric on $\bar{\mathcal{M}}^b$.
\end{lem}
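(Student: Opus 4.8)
The plan is to check the metric axioms for $\bar{d}^b$, noting that all but separation are formal consequences of the ``supremum of pseudometrics'' structure, and to obtain separation by feeding one especially simple graphop into the definition.

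\textbf{Pseudometric structure and finiteness.} First I would observe that for each fixed $B\in\mathcal{G}$ and $x\in\Omega$ the functional $\bar{d}^{b,B,x}(\bar{\mu},\bar{\kappa}):=\int_\Omega d_{BL}(\mu^y,\kappa^y)~\txtd\nu_x^B(y)$ is a pseudometric on $\bar{\mathcal{M}}^b$: non-negativity and symmetry are inherited from $d_{BL}$, and the triangle inequality follows by integrating $d_{BL}(\mu^y,\lambda^y)\le d_{BL}(\mu^y,\kappa^y)+d_{BL}(\kappa^y,\lambda^y)$ against the finite measure $\nu_x^B$. Moreover, since $\bar{\mu},\bar{\kappa}\in\bar{\mathcal{M}}^b$ and every $f\in\mathcal{L}$ satisfies $0\le\int_{\mathbb{T}}f~\txtd\mu^y\le\mu^y(\mathbb{T})\le b$, one has $d_{BL}(\mu^y,\kappa^y)\le b$ for \emph{every} $y\in\Omega$, whence $\bar{d}^{b,B,x}(\bar{\mu},\bar{\kappa})\le b\,\nu_x^B(\Omega)\le b\gamma_B\le b$. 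Taking the supremum over $B\in\mathcal{G}$ and $x\in\Omega$, the quantity $\bar{d}^b$ is therefore finite (bounded by $b$), and being a supremum of a family of pseudometrics it is itself a pseudometric; in particular $\bar{d}^b(\bar{\mu},\bar{\mu})=0$, $\bar{d}^b$ is symmetric, and it satisfies the triangle inequality.

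\textbf{Separation.} The only substantive point is that $\bar{d}^b(\bar{\mu},\bar{\kappa})=0$ forces $\bar{\mu}=\bar{\kappa}$. The idea is to pick, inside $\mathcal{G}$, a graphop whose fiber measures are Dirac masses. I claim the identity operator $\Id\colon L^\infty(\Omega,m)\to L^1(\Omega,m)$ does the job: it is well defined (as $m$ is a probability measure), bounded with $\|\Id\|_{\infty\to 1}\le 1$, positivity preserving, and self-adjoint for $\langle v,w\rangle=\int_\Omega vw~\txtd m$, hence a graphop; its degree function is $\Id\,\chi_\Omega\equiv\chi_\Omega$, so $\gamma_{\Id}=1$ and $\Id\in\mathcal{G}^{m}\subseteq\mathcal{G}$. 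By Theorem~\ref{thm: measure repres of graphops} its fiber measures are the uniquely determined Dirac measures $\nu_x^{\Id}=\delta_x$, the only measures on $\Omega$ with $\int_\Omega f~\txtd\nu_x^{\Id}=f(x)$ for all $f\in C(\Omega)$; fixing this canonical representative, one gets $\bar{d}^b(\bar{\mu},\bar{\kappa})\ge\bar{d}^{b,\Id}(\bar{\mu},\bar{\kappa})=\sup_{x\in\Omega}\int_\Omega d_{BL}(\mu^y,\kappa^y)~\txtd\delta_x(y)=\sup_{x\in\Omega}d_{BL}(\mu^x,\kappa^x)$. Hence $\bar{d}^b(\bar{\mu},\bar{\kappa})=0$ yields $d_{BL}(\mu^x,\kappa^x)=0$ for every $x\in\Omega$, and since $(\mathcal{M}_f(\mathbb{T}),d_{BL})$ is a metric space we conclude $\mu^x=\kappa^x$ for all $x$, i.e.\ $\bar{\mu}=\bar{\kappa}$. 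Combined with the upper bound from the first step this actually shows $\bar{d}^b(\bar{\mu},\bar{\kappa})=\sup_{x\in\Omega}d_{BL}(\mu^x,\kappa^x)$; should one prefer to avoid the identity graphop, the same lower bound follows by using, for each $y_0\in\Omega$, the constant graphon $W\equiv 1$ on the Borel probability space $(\Omega,\Sigma,\delta_{y_0})$, whose unique fiber measure is $\delta_{y_0}$.

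\textbf{Main obstacle.} I expect the only genuine care to be needed in the separation step: justifying that the identity is an admissible graphop lying in $\mathcal{G}$, and reconciling the ``$m$-a.e.''\ nature of the fiber measures with the pointwise supremum $\sup_{x\in\Omega}$ appearing in $\bar{d}^{b,B}$. Both are handled, as the excerpt already stipulates, by working with representatives of the fiber measures defined on all of $\Omega$, which for $\Id$ is unavoidably $x\mapsto\delta_x$.
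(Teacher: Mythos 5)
Your proof is correct, and the non-trivial step (separation) is handled by a different, more direct mechanism than the paper's. The paper never exhibits a single distinguished graphop; instead it runs a chain of implications: from $\bar{d}^b=0$ it deduces $\int_\Omega d_{BL}(\mu^y,\kappa^y)\,\txtd\nu_x^B(y)=0$ for $l$-a.e.\ $x$, integrates against $l$, uses the symmetry of the representing measure $\nu^B$ to swap the roles of $x$ and $y$, arrives at $\nu_x^B(\Omega)\,d_{BL}(\mu^x,\kappa^x)=0$ $l$-a.e., and only then exploits that $\mathcal{G}$ ranges over \emph{all} base probability measures $l$ — taking $l=\delta_{x_0}$ together with a graphon of nonvanishing degree to localize at each point $x_0$. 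Your identity-graphop argument reaches the same conclusion in one line, stays inside $\mathcal{G}^m\subseteq\mathcal{G}$ (no need to vary the base measure), and as a bonus yields the clean identity $\bar{d}^b(\bar\mu,\bar\kappa)=\sup_{x\in\Omega}d_{BL}(\mu^x,\kappa^x)$ (for the upper half of that identity you should use the refinement $\bar{d}^{b,B,x}\le\nu_x^B(\Omega)\sup_z d_{BL}(\mu^z,\kappa^z)\le\gamma_B\sup_z d_{BL}(\mu^z,\kappa^z)$ rather than the crude bound by $b$); your fallback via the constant graphon on $(\Omega,\Sigma,\delta_{y_0})$ is essentially the paper's own device. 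Your finiteness bound $d_{BL}(\mu^y,\kappa^y)\le b$ is also cleaner than the paper's detour through the total variation distance. The one point both arguments share and neither fully escapes is that $\bar{d}^{b,B}$ involves a pointwise $\sup_{x\in\Omega}$ while fiber measures are canonically defined only $m$-a.e.; you flag this correctly, and for the identity (as for any graphon) there is an obvious everywhere-defined representative, so nothing breaks.
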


\begin{proof}
	We first note that $\bar{d}^b$ takes positive finite values since for any probability measure on $(\Omega,\Sigma)$, any $B \in \mathcal{G}^{l}$ and for $l$-a.a. $x \in \Omega$ we have ($d_{TV}$ denotes the total variation of measures):
	\begin{align*}
	0 \leq \int_\Omega d_{BL}(\mu^y,\kappa^y) ~\txtd\nu_x^B(y) &\leq \textnormal{diam}(\Omega) \int_\Omega \underbrace{d_{TV}(\mu^y,\kappa^y) }_{\leq b}~\txtd\nu_x^B(y)\\
	&\leq \textnormal{diam}(\Omega) b  \nu_x^B(\Omega) \\
	&\leq b\cdot \textnormal{diam}(\Omega) \gamma_B \\
	&\leq b\cdot \textnormal{diam}(\Omega).
	\end{align*}
	Note that for the graphop $A$ and any Borel measurable functions $f,g \in  L^\infty(\Omega,m)$ we have (due to the definition of the fiber measures and Theorem \ref{thm: measure repres of graphops}) 
	\begin{align*}
	\int_{\Omega^2} f(x)g(y) ~\txtd\nu_x^A(y) ~\txtd m(x)=\int_{\Omega^2}f(x)g(y)~\txtd\nu(x,y)  = 	\int_{\Omega^2} g(x)f(y) ~\txtd\nu_x^A(y) ~\txtd m(x)
	\end{align*}
	Thus, for two elements $\bar{\mu},\bar{\kappa} \in \bar{\mathcal{M}}^{b}$ we have following implications:
	\begin{align*}
	sup_{B \in \mathcal{G}^{\psi}}\sup_{x \in \Omega} \int_\Omega &d_{BL}(\mu^y,\kappa^y) ~\txtd\nu_x^B(y)  = 0 \\
	&\Rightarrow \int_\Omega d_{BL}(\mu^y,\kappa^y)~\txtd\nu_x^B(y)= 0  \text{ for }l-a.e. x \in \Omega \quad \forall B\in \mathcal{G}^{l} \quad \forall \text{ probability measures } l.\\
	&\Rightarrow \underbrace{\int_\Omega\int_\Omega d_{BL}(\mu^y,\kappa^y) ~\txtd\nu_x^B(y) dl(x)}_{ = \int_\Omega\int_\Omega d_{BL}(\mu^x,\kappa^x) ~\txtd\nu_x^B(y)dl(x) }= 0 \quad \forall B\in \mathcal{G}^{l} \quad \forall \text{ probability measures } l. \\
	&\Rightarrow \int_\Omega \nu_x^B(\Omega) d_{BL}(\mu^x,\kappa^x)dl(x) = 0  \quad \forall B\in \mathcal{G}^{l} \quad \forall \text{ probability measures } l.\\
	&\Rightarrow  \nu_x^B(\Omega) d_{BL}(\mu^x,\kappa^x) = 0 \quad l\text{-a.e. } x\in \Omega \quad \forall B\in \mathcal{G}^{l} \quad \forall \text{ probability measures } l.
	\end{align*}
	Now, by the observation that the set $\mathcal{G}^l$ surely contains graphops for which $\nu_x^B(\Omega) \neq 0$ for $l$-a.e.$x \in \Omega$ holds (we can consider for example the special case of graphons with $~\txtd\nu_x^B(y):= W(x,y)~\txtd l(y)$ and $W$ can be an arbitrary kernel) and that the measure $l$ can be concentrated at any point $x \in \Omega$ (we can consider for example the case that $l = \delta_x$ is as Dirac measure) we conclude that 
	\begin{equation*}
	\mu^y= \kappa^y \text{ for all } y\in \Omega.
	\end{equation*}
	The other properties of the metric are easy to check.
\end{proof}

\begin{lem} \textbf{(Completeness of $\bar{\mathcal{M}}^{b}$)} \label{lem: Mb is complete}\\
	The metric space $(\bar{\mathcal{M}}^{b},\bar{d}^{b})$ is complete.
\end{lem}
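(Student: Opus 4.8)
The plan is to show that $\bar{d}^{b}$ is nothing but the uniform metric on maps into $(\mathcal{M}_{f}(\mathbb{T}),d_{BL})$, and then to run the standard argument that a uniformly Cauchy family of functions into a complete metric space converges uniformly, taking care that the two defining constraints of $\bar{\mathcal{M}}^{b}$ (measurability and the mass bound $\le b$) survive passage to a uniform limit.

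\emph{Step 1: identification of the metric.} I claim that
\[
\bar{d}^{b}(\bar{\mu},\bar{\kappa}) \;=\; \sup_{x\in\Omega} d_{BL}(\mu^{x},\kappa^{x}) \qquad \text{for all } \bar{\mu},\bar{\kappa}\in\bar{\mathcal{M}}^{b}.
\]
The bound ``$\le$'' is immediate from $\gamma_{B}\le 1$: for any $B\in\mathcal{G}$ on a probability space $(\Omega,\Sigma,l)$ and any $x$,
\[
\int_{\Omega} d_{BL}(\mu^{y},\kappa^{y})\,\txtd\nu_{x}^{B}(y) \;\le\; \nu_{x}^{B}(\Omega)\,\sup_{y}d_{BL}(\mu^{y},\kappa^{y}) \;\le\; \gamma_{B}\,\sup_{y}d_{BL}(\mu^{y},\kappa^{y}) \;\le\; \sup_{y}d_{BL}(\mu^{y},\kappa^{y}),
\]
and one takes the supremum over $x$ and over $B\in\mathcal{G}$. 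For ``$\ge$'' I reuse the device from the proof of Lemma~\ref{lem: db and db2 are equival metrics}: for a fixed $x_{0}\in\Omega$ the identity operator on $(\Omega,\Sigma,\delta_{x_{0}})$ is a graphop lying in $\mathcal{G}$ whose fiber measure at $x_{0}$ is $\delta_{x_{0}}$, so the corresponding term in the supremum defining $\bar{d}^{b}$ equals $d_{BL}(\mu^{x_{0}},\kappa^{x_{0}})$; hence $\bar{d}^{b}(\bar{\mu},\bar{\kappa})\ge d_{BL}(\mu^{x_{0}},\kappa^{x_{0}})$ for every $x_{0}\in\Omega$.

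\emph{Step 2: completeness.} Let $(\bar{\mu}_{n})_{n}$ be Cauchy in $(\bar{\mathcal{M}}^{b},\bar{d}^{b})$. By Step 1 the sequence $(\mu_{n}^{x})_{n}$ is Cauchy in $(\mathcal{M}_{f}(\mathbb{T}),d_{BL})$, uniformly in $x$; since that space is complete, $\mu^{x}:=\lim_{n}\mu_{n}^{x}$ exists for every $x$ and $\sup_{x}d_{BL}(\mu_{n}^{x},\mu^{x})\to 0$. It remains to check $\bar{\mu}=(\mu^{x})_{x}\in\bar{\mathcal{M}}^{b}$. The mass bound persists because the constant function $1$ belongs to $\mathcal{L}$, so $|\mu^{x}(\mathbb{T})-\mu_{n}^{x}(\mathbb{T})|\le d_{BL}(\mu^{x},\mu_{n}^{x})\to 0$ and hence $\mu^{x}(\mathbb{T})=\lim_{n}\mu_{n}^{x}(\mathbb{T})\le b$. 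Measurability persists because $\bar{\mu}$ is a uniform limit of measurable maps into the metric space $(\mathcal{M}_{f}(\mathbb{T}),d_{BL})$; concretely, for each Lipschitz $f\in C(\mathbb{T})$ the map $x\mapsto\int_{\mathbb{T}}f\,\txtd\mu^{x}$ is a uniform, hence pointwise, limit of the measurable maps $x\mapsto\int_{\mathbb{T}}f\,\txtd\mu_{n}^{x}$, and Lipschitz functions are dense in $C(\mathbb{T})$, giving measurability of $x\mapsto\mu^{x}$. Finally, by Step 1 once more, $\bar{d}^{b}(\bar{\mu}_{n},\bar{\mu})=\sup_{x}d_{BL}(\mu_{n}^{x},\mu^{x})\to 0$, so the Cauchy sequence converges in $\bar{\mathcal{M}}^{b}$.

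The only step that is not routine bookkeeping is the identification in Step 1, and within it the lower bound: it genuinely exploits that $\mathcal{G}$ is permitted to contain graphops defined over degenerate point-mass probability spaces, exactly the trick already used to prove that $\bar{d}^{b}$ separates points in Lemma~\ref{lem: db and db2 are equival metrics}. Once $\bar{d}^{b}$ is recognized as the uniform metric, the rest — completeness of the target $(\mathcal{M}_{f}(\mathbb{T}),d_{BL})$, and stability of the mass bound and of measurability under uniform limits — is standard.
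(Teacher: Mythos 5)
Your proof is correct, and it takes a genuinely different route from the paper's. Your key move is the identification $\bar{d}^{b}(\bar{\mu},\bar{\kappa})=\sup_{x\in\Omega}d_{BL}(\mu^{x},\kappa^{x})$: the upper bound uses only $\gamma_{B}\le 1$, and the lower bound uses that $\mathcal{G}$ ranges over graphops with respect to \emph{arbitrary} probability measures $l$, in particular $l=\delta_{x_{0}}$ with kernel $W\equiv 1$, whose fiber measure at $x_{0}$ is $\delta_{x_{0}}$. This is legitimate — it is exactly the device the paper itself deploys in Lemma \ref{lem: db and db2 are equival metrics} to show that $\bar{d}^{b}$ separates points — and once the metric is recognized as the uniform metric, completeness reduces to the textbook fact that uniformly Cauchy maps into the complete space $(\mathcal{M}_{f}(\mathbb{T}),d_{BL})$ converge uniformly, with the mass bound surviving because $1\in\mathcal{L}$ and measurability surviving as a pointwise limit. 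The paper instead extracts a rapidly Cauchy subsequence, uses monotone convergence to get $\sum_{k}d_{BL}(\mu_{n_{k}}^{y},\mu_{n_{k+1}}^{y})<\infty$ for $\nu_{x}^{B}$-a.a.\ $y$ for every $x$ and every $B\in\mathcal{G}$, builds the limit fiberwise, and then still has to invoke the same Dirac-measure observation to upgrade the a.e.\ statements to statements holding for every $y$. So both arguments ultimately rest on the same structural feature of $\mathcal{G}$; yours isolates it up front, which makes the proof shorter and, as a by-product, exposes that the elaborate supremum over $\mathcal{G}$ in the definition of $\bar{d}^{b}$ collapses to the uniform metric (which would also streamline Lemma \ref{lem: db and db2 are equival metrics} and Lemma \ref{lem: estim dx bounds dWA}). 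The only thing the paper's subsequence/monotone-convergence template buys is robustness: it is the version of the argument one would need if $\mathcal{G}$ were restricted so as to exclude the degenerate Dirac-based graphops, in which case your identification would fail and only an essential-supremum variant would remain.
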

\begin{proof}
	The proof is adapted from  \cite[Lemma 2.1]{Medvedev2018}. Let $\{ \bar{\mu}_n \}_{n\in \mathbb{N}}$ be a Cauchy sequence in $\bar{\mathcal{M}}^{b}$. We have to show that this sequence converges in $\bar{\mathcal{M}}^{b}$. Since $\{ \bar{\mu_n} \}$ is a Cauchy sequence, there is an increasing sequence of indices $n_k$ such that 
	\begin{equation*}
	\bar{d}^{b}(\bar{\mu}_{n_k}, \bar{\mu}_{n_{k+1}}) =\sup_{B \in \mathcal{G}} \sup_{x\in \Omega}  \int_\Omega d_{BL}(\mu_{n_k}^y,\mu_{n_{k+1}}^y) ~\txtd\nu_x^B(y) < \frac{1}{2^{k+1}}, \quad k=1,2,... .
	\end{equation*}
	Using the monotone convergence theorem, this implies that for for any probability measure $l$ on $(\Omega, \Sigma)$, any $B \in \mathcal{G}^{l}$, and for $l$-a.a. $x\in \Omega$ we have that
	\begin{equation*}
	\int_\Omega \sum_{k=1}^\infty d_{BL}(\mu_{n_k}^y,\mu_{n_{k+1}}^y) ~\txtd\nu_x^B(y) = 
	\sum_{k=1}^\infty \int_\Omega d_{BL}(\mu_{n_k}^y,\mu_{n_{k+1}}^y) ~\txtd\nu_x^B(y)   < \infty
	\end{equation*}
	or in other words, the function $f: \Omega \to \mathbb{R}$ given $f(y)= \sum_{k=1}^\infty d_{BL}(\mu_{n_k}^y,\mu_{n_{k+1}}^y)$ is $\nu_x^B-$integrable for all $x\in \Omega$. This implies especially that 
	\begin{equation*}
	\sum_{k=1}^\infty d_{BL}(\mu_{n_k}^y,\mu_{n_{k+1}}^y)  < \infty \text{\quad for $\nu_x^B$-a.a. $y \in  \Omega$, for every $x \in \Omega$.}
	\end{equation*}
	Since for every indices $i,j$ with $j > i$ we have
	\begin{equation*}
	d_{BL}(\mu_{n_i}^y, \mu_{n_{j}}^y) \leq \sum_{k=i}^{j-1} d_{BL}(\mu_{n_k}^y, \mu_{n_{k+1}}^y) \to 0 \text{\quad as } i,j\to 0,
	\end{equation*}
	the sequence $\{\mu_{n_k}^y\}$ is Cauchy for $\nu_x^B$-a.a. $y \in  \Omega$, for every $x \in \Omega$.
	Hence, since the metric space $(\mathcal{M}_f,d_{BL})$ is complete, there exists the limit 
	\begin{equation*}
	\mu^y = \lim_{k\to \infty} \mu_{n_k}^y, \text{\quad for $\nu_x^B$-a.a. $y \in  \Omega$, for every $x \in \Omega$}
	\end{equation*}
	which is a measurable function as a limit of measurable functions.	
	Note further that for $\nu_x^B$-a.a. $y \in  \Omega$, for every $x \in \Omega$ we have that
	\begin{equation*}
	|\mu_{n_k}^y (\mathbb{T}) -\mu^y(\mathbb{T}) | \leq d_{BL}(\mu_{n_k}^y,\mu^y) \to 0 \quad \text{as } k\to \infty
	\end{equation*}
	which implies, due to the fact that $\bar{\mu}_{n_k} \in \bar{\mathcal{M}}^{b}$ that 
	\begin{equation*}
	\mu^y(\mathbb{T}) \leq b \quad \text{ \quad for $\nu_x^B$-a.a. $y \in  \Omega$, for every $x \in \Omega$}.
	\end{equation*}
	Since $B\in \mathcal{G}$ can be any graphop, w.r.t. an arbitary probability measure $l$, this implies (similarly to the proof of Lemma \ref{lem: db and db2 are equival metrics}) that for the limit $\mu$,  $\bar{\mu}\in \bar{\mathcal{M}}$ holds. To show that $\bar{\mu}$ is also the limit of the whole sequence, we note that for every indices $j> i$ we have
	\begin{align*}
	\bar{d}^{b}(\bar{\mu}_{n_i}, \bar{\mu}_{n_{j}}) &= \sup_{B \in \mathcal{G}}\sup_{x \in \Omega} \int_\Omega d_{BL}(\mu_{n_i}^y,\mu_{n_{j}}^y) ~\txtd\nu_x^B(y) \\
	&\leq \sup_{B \in \mathcal{G}}\sup_{x \in \Omega} \int_\Omega \sum_{k=i}^{j-1} d_{BL}(\mu_{n_k}^y,\mu_{n_{k+1}}^y) ~\txtd\nu_x^B(y) \\
	&\leq  \sum_{k=i}^{j-1}\sup_{B \in \mathcal{G}}\sup_{x \in \Omega}\int_\Omega  d_{BL}(\mu_{n_k}^y,\mu_{n_{k+1}}^y)~\txtd\nu_x^B(y) \\
	&\leq  \sum_{k=i}^{\infty} \underbrace{\sup_{B \in \mathcal{G}}\sup_{x \in \Omega}\int_\Omega  d_{BL}(\mu_{n_k}^y,\mu_{n_{k+1}}^y) ~\txtd\nu_x^B(y) }_{\leq \frac{1}{2^{k+1}}} \\
	&\leq \frac{1}{2^i} \to 0 \text{\quad as } i \to \infty.
	\end{align*}
	Now using the continuity of the metric and the dominated convergence theorem we obtain
	\begin{align*}
	\bar{d}^{b}(\bar{\mu}_{n_i}, \bar{\mu}) &=\sup_{B \in \mathcal{G}} \sup_{x \in \Omega} \int_\Omega d_{BL}(\mu_{n_i}^y,\mu^y) ~\txtd\nu_x^B(y)\\
	&= \sup_{B \in \mathcal{G}}\sup_{x \in \Omega} \int_\Omega \lim_{j \to \infty} d_{BL}(\mu_{n_i}^y,\mu_{n_j}^y) ~\txtd\nu_x^B(y) \\
	&= \sup_{B \in \mathcal{G}} \sup_{x \in \Omega} \lim_{j \to \infty}\int_\Omega  d_{BL}(\mu_{n_i}^y,\mu_{n_j}^y) ~\txtd\nu_x^B(y)\\
	&\leq  \limsup_{j \to \infty} \underbrace{\sup_{B \in \mathcal{G}}\sup_{x \in \Omega}\int_\Omega  d_{BL}(\mu_{n_i}^y,\mu_{n_j}^y) ~\txtd\nu_x^B(y)}_{\leq \frac{1}{2^i}}\\
	& \to 0 \text{\quad as } i \to \infty.
	\end{align*}
	Hence, the subsequence $\bar{\mu}_{n_i}$ converges to $\bar{\mu}$. Since it is a subsequence of the Cauchy sequence $\bar{\mu}_{n}$, this implies already the convergence of the whole sequence towards $\bar{\mu}$, i.e.
	\begin{equation*}
	\bar{d}^{b}(\bar{\mu}_{n}, \bar{\mu}) \to 0 \text{\quad as } n\to \infty.
	\end{equation*}
	Hence, the space $(\bar{\mathcal{M}}^{b},\bar{d}^{b})$ is complete.	
\end{proof}

\begin{lem} \cite[Lemma 2.5]{Medvedev2018} \textbf{(Gronwwall's lemma)} 
	\label{lem: Gronwalls lemma} \\
	Let $\phi(t)$ and $\alpha(t)$ be continuous functions on $[0,T]$ and
	\begin{equation*}
	\phi(t) \leq A \int_0^t \phi(s) ~\txtd s + B \int_0^t \alpha(s)~\txtd s + C, \quad t\in [0,T],
	\end{equation*}
	where $A\geq 0$. Then
	\begin{equation*}
	\phi(t) \leq \txte^{At}\Big( B \int_0^t \alpha(s) \txte^{-As} ~\txtd s + C\Big).
	\end{equation*}
\end{lem}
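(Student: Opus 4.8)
The plan is to reduce the integral inequality to a differential inequality for its right-hand side and then integrate using the standard integrating factor $\txte^{-At}$. First I would introduce the auxiliary function
\[
\psi(t) := A \int_0^t \phi(s)~\txtd s + B \int_0^t \alpha(s)~\txtd s + C,
\]
which by hypothesis satisfies $\phi(t) \leq \psi(t)$ for every $t \in [0,T]$. Since $\phi$ and $\alpha$ are continuous, the fundamental theorem of calculus guarantees that $\psi$ is continuously differentiable with $\psi'(t) = A\phi(t) + B\alpha(t)$, and clearly $\psi(0) = C$.

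The key step is to derive a differential inequality for $\psi$. Because $A \geq 0$, multiplying $\phi(t) \leq \psi(t)$ by $A$ preserves the inequality direction, so
\[
\psi'(t) = A\phi(t) + B\alpha(t) \leq A\psi(t) + B\alpha(t),
\]
that is, $\psi'(t) - A\psi(t) \leq B\alpha(t)$. Multiplying through by the strictly positive factor $\txte^{-At}$ turns the left-hand side into an exact derivative:
\[
\frac{\txtd}{\txtd t}\big(\txte^{-At}\psi(t)\big) = \txte^{-At}\big(\psi'(t) - A\psi(t)\big) \leq B\txte^{-At}\alpha(t).
\]
Integrating this from $0$ to $t$ and using $\psi(0) = C$ yields
\[
\txte^{-At}\psi(t) - C \leq B \int_0^t \txte^{-As}\alpha(s)~\txtd s,
\]
whence $\psi(t) \leq \txte^{At}\big(C + B\int_0^t \alpha(s)\txte^{-As}~\txtd s\big)$. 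Finally, invoking $\phi(t) \leq \psi(t)$ once more gives exactly the claimed bound.

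I expect no serious obstacle in this argument; the only point requiring care is that the sign hypothesis $A \geq 0$ is precisely what is needed to preserve the inequality when passing from $\phi \leq \psi$ to the differential inequality for $\psi$. It is worth emphasizing that no sign assumption on $B$, $C$, or $\alpha$ is required: the integrating-factor step is an identity, and integration of an inequality preserves its direction regardless of the sign of the integrand, so the estimate holds for arbitrary continuous $\alpha$.
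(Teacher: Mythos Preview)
Your argument is correct and is the standard integrating-factor proof of this form of Gronwall's inequality. Note, however, that the paper does not supply its own proof of this lemma at all: it simply cites \cite[Lemma~2.5]{Medvedev2018} and states the result without argument, so there is no paper proof to compare your approach against.
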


\begin{proof}[Proof of Theorem \ref{thm: existence and uniqness of solutions for the fixed point eq}]
	We follow the lines of the proof of \cite[Theorem 2.4]{Medvedev2018}. \\
	\textbf{(I)} First of all, it is easy to see that for any $\mu \in \mathcal{M}^{b}_{\mathcal{T}}$, $\overline{\mathcal{F}\mu_t} \in \bar{\mathcal{M}}^b$ holds. Further, for any times $t_0, t\in \mathcal{T}$, w.l.o.g. $t\geq t_0$,  we calculate, using a change of variables (in the third equation),
	\begin{align}\label{eq: thm: A is a constaction, 3}
	\bar{d}^{b} ( \mathcal{F}[\mu](t,\cdot), \mathcal{F}[\mu](t_0,\cdot)) &= \bar{d}^{b} (\bar{\mu}_0\circ T_{0,t}[\mu,\cdot],\bar{\mu}_0\circ T_{0,t_0}[\mu,\cdot]) \nonumber \\
	&= \sup_{B\in \mathcal{G}}\sup_{x \in \Omega} \int_\Omega d_{BL} (\mu_0^y\circ T_{0,t}^y[\mu],\mu_0^y\circ T_{0,t_0}^y[\mu]) ~\txtd\nu_x^B(y) \nonumber \\
	&=\sup_{B\in \mathcal{G}}\sup_{x \in \Omega}\int_\Omega \sup_{f\in \mathcal{L}} \Big| \int_G  \Big( f(T^y_{t,0}[\mu]v)- f(T^y_{t_0,0}[\kappa]v) \Big) ~\txtd\mu_0^y(v) \Big|  ~\txtd\nu_x^B(y) \nonumber\\
	&\leq \sup_{B\in \mathcal{G}}\sup_{x\in \Omega}\int_\Omega \int_\mathbb{T} | T^y_{t,0}[\mu]v - T^y_{t_0,0}[\mu]v |~\txtd\mu_0^y(v)   ~\txtd\nu_x^B(y) \nonumber\\
	&\leq \int_{t_0}^t \sup_{B\in \mathcal{G}}\sup_{x\in \Omega}\int_\Omega \int_\mathbb{T} \underbrace{|V[\mathcal{A},\mu, x](s,T^y_{s,0}[\mu]v) |}_{\leq C \parallel D \parallel_\infty b \gamma_A \text{ by Lemma \ref{lem: V, T are uniformly bounded} }} ~\txtd\mu_0^y(v)   ~\txtd\nu_x^B(y) ~\txtd s \nonumber \\
	&\leq C\parallel D \parallel_\infty b^2  (t-t_0) \to 0 \text{ as $t \to t_0$}.
	\end{align}	
	This shows that $\mathcal{F}\mu \in \mathcal{M}^{b}_{\mathcal{T}}$. Thus, $\mathcal{F}$ is well-defined.
	Now let $\mu, \kappa \in \mathcal{M}^{b}_{\mathcal{T}}$.
	As before we calculate that 
	\begin{align}\label{eq: thm: A is a constaction, 1}
	\bar{d}^{b} ( \mathcal{F}[\mu](t,\cdot), \mathcal{F}[\kappa](t,\cdot)) &= \bar{d}^{b} (\bar{\mu}_0\circ T_{0,t}[\mu,\cdot],\bar{\mu}_0\circ T_{0,t}[\kappa,\cdot]) \nonumber \\
	&= \sup_{B\in \mathcal{G}}\sup_{x \in \Omega} \int_\Omega d_{BL} (\mu_0^y\circ T_{0,t}^y[\mu],\mu_0^y\circ T_{0,t}^y[\kappa]) ~\txtd\nu_x^B(y) \nonumber \\
	&=\sup_{B\in \mathcal{G}}\sup_{x \in \Omega}\int_\Omega   \sup_{f\in \mathcal{L}}\Big|\int_G  \Big( f(T^y_{t,0}[\mu]v)- f(T^y_{t,0}[\kappa]v) \Big) ~\txtd\mu_0^y(v) \Big|  ~\txtd\nu_x^B(y) \nonumber\\
	&\leq \sup_{B\in \mathcal{G}}\sup_{x \in \Omega}\int_\Omega \int_\mathbb{T} | T^y_{t,0}[\mu]v - T^y_{t,0}[\kappa]v |~\txtd\mu_0^y(v) ~\txtd\nu_x^B(y) =: \lambda(t)
	\end{align}	
	Here we used again a change of variables. Using the triangular inequality we calculate
	\begin{align} \label{eq: thm: A is a constaction, 2}
	\lambda(t) &=\sup_{B\in \mathcal{G}}\sup_{x \in \Omega} \int_\Omega\int_{\mathbb{T}} | T^y_{t,0}[\mu]v - T^y_{t,0}[\kappa]v |~\txtd\mu_0^y(v)  ~\txtd\nu_x^B(y) \nonumber\\
	&\leq \sup_{B\in \mathcal{G}}\sup_{x \in \Omega}\int_0^t \int_\Omega\int_\mathbb{T} |V[\mu,y](s,T^y_{s,0}[\mu]v) - V[\kappa,y](s,T^y_{s,0}[\kappa]v) |~\txtd\mu_0^y(v)  d \nu_x^B(y) ~\txtd s \nonumber\\
	&\leq\sup_{B\in \mathcal{G}}\sup_{x \in \Omega}  \int_0^t\int_\Omega \int_\mathbb{T} |V[\mu,y](s,T^y_{s,0}[\mu]v) - V[\kappa,y](s,T^y_{s,0}[\mu]v) |~\txtd\mu_0^y(v)~\txtd\nu_x^B(y)    ~\txtd s \nonumber\\
	&+\sup_{B\in \mathcal{G}}\sup_{x \in \Omega} \int_0^t \int_\Omega\int_\mathbb{T}|V[\kappa,y](s,T^y_{s,0}[\mu]v) - V[\kappa,y](s,T^y_{s,0}[\kappa]v) |~\txtd\mu_0^y(v)  ~\txtd\nu_x^B(y) ~\txtd s.
	\end{align}
	With this notation we calculate for the first difference, using Lemma \ref{lem: regularity of V} 
	\begin{align*}
	\sup_{B\in \mathcal{G}}\sup_{x \in \Omega} \int_0^t \int_\Omega\int_\mathbb{T}  &|V[\mu,y](s,T^y_{s,0}[\mu]v) - V[\kappa,y](s,T^y_{s,0}[\mu]v) |~\txtd\mu_0^y(v)  ~\txtd\nu_x^B(y)  ~\txtd s  \\
	&\leq 2C\int_0^t \underbrace{\sup_{B\in \mathcal{G}} \sup_{x\in \Omega}\Big( \nu_x^B(\Omega) \Big)}_{\leq \sup_{B\in \mathcal{G}} \gamma_B \leq 1}\underbrace{\mu_0(\mathbb{T})}_{\leq b}    \bar{d}^{b} (\bar{\mu}_s,\bar{\kappa}_s )    ~\txtd s\\
	&\leq   2Cb \int_0^t\bar{d}^{b}(\bar{\mu}_s,\bar{\kappa}_s)~\txtd s
	\end{align*}
	For the second difference we calculate, again using Lemma \ref{lem: regularity of V}
	\begin{align*}
	\sup_{B\in \mathcal{G}}\sup_{x \in \Omega} \int_0^t \int_\Omega\int_\mathbb{T} &|V[\kappa,y](s,T^y_{s,0}[\mu]v) - V[\kappa,y](s,T^y_{s,0}[\kappa]v) |~\txtd\mu_0^y(v)  ~\txtd\nu_x^B(y) ~\txtd s \\
	&\leq  b\gamma_A  \int_0^t \sup_{B\in \mathcal{G}}\sup_{x \in \Omega} \int_\Omega\int_\mathbb{T}|T^y_{s,0}[\mu]v - T^y_{s,0}[\kappa]v|~\txtd\mu_0^y(v)  ~\txtd\nu_x^B(y) ~\txtd s \\
	&= b \gamma_A \int_0^t \lambda(s) ~\txtd s.
	\end{align*}
	We set $C_1:=2Cb$ and  $C_2:=b \gamma_A$. 
	Substituting both expressions in (\ref{eq: thm: A is a constaction, 2}) we get
	\begin{align*}
	\lambda(t) \leq C_1\int_0^t\bar{d}^{b}(\bar{\mu}_s,\bar{\nu}_s)~\txtd s + C_2 \int_0^t \lambda(s) ~\txtd s.
	\end{align*}
	Using Gronwall's inequality, (cf. Lemma \ref{lem: Gronwalls lemma}) we obtain
	\begin{equation}
	\lambda(t) \leq C_1 \txte^{C_2t} \int_0^t\bar{d}^{b}(\bar{\mu}_s,\bar{\nu}_s) \txte^{-C_2s}~\txtd s.
	\end{equation}
	Using (\ref{eq: thm: A is a constaction, 1}) this implies that 
	\begin{align*}
	\bar{d}^{b} ( \mathcal{F}[\mu](t,\cdot), \mathcal{F}[\kappa](t,\cdot)) &\leq C_1 \txte^{C_2t} \int_0^t\bar{d}^{b}(\bar{\mu}_s,\bar{\nu}_s) \txte^{-C_2s}~\txtd s.
	\end{align*}
	Hence, 
	\begin{align*}
	d_\alpha^{b} ( \mathcal{F}[\mu](t,\cdot), \mathcal{F}[\kappa](t,\cdot) &= \sup_{t \in \mathcal{T}} \Big\{ \txte^{-\alpha t}\bar{d}^{b} (\mathcal{F} [\mu](t,\cdot), \mathcal{F}[\kappa](t,\cdot)) \Big\} \\
	&\leq \sup_{t \in \mathcal{T}} 
	C_1 \txte^{-(\alpha-C_2)t}\int_0^t\bar{d}^{b}(\bar{\mu}_s,\bar{\kappa}_s) \txte^{-C_2s}~\txtd s \\
	&\leq C_1 d_\alpha^{b}(\bar{\mu}, \bar{\kappa}) \sup_{t \in \mathcal{T}} \txte^{-(\alpha-C_2)t}\int_0^t \txte^{(\alpha-C_2)s} ~\txtd s\\
	\leq C_1 (\alpha-C_2)^{-1}d^{b}_\alpha(\bar{\mu}, \bar{\kappa}).
	\end{align*}
	This proves the claim.\\
	 \textbf{\textbf{(II)}} This follows immediately from \textbf{(I)} and the Banach contraction principle in the complete metric space $\mathcal{M}^{b}_{\mathcal{T}}$.
\end{proof}

\begin{lem} \textbf{(Completeness of $\bar{\mathcal{M}}^{b,2}$)} \label{lem: Mb,c is complete}\\
	The metric space $(\bar{\mathcal{M}}^{b,2},\bar{d}^{b})$ is complete.
\end{lem}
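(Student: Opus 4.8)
The plan is to realize $\bar{\mathcal{M}}^{b,2}$ as a \emph{closed} subset of the metric space $(\bar{\mathcal{M}}^{b},\bar{d}^{b})$, which is complete by Lemma~\ref{lem: Mb is complete}; since a closed subset of a complete metric space is complete, this suffices. Concretely, let $\{\bar{\mu}_n\}_{n\in\mathbb{N}}$ be a Cauchy sequence in $(\bar{\mathcal{M}}^{b,2},\bar{d}^{b})$. As $\bar{\mathcal{M}}^{b,2}\subset\bar{\mathcal{M}}^{b}$, Lemma~\ref{lem: Mb is complete} yields a limit $\bar{\mu}\in\bar{\mathcal{M}}^{b}$ with $\bar{d}^{b}(\bar{\mu}_n,\bar{\mu})\to 0$; in particular $\bar{\mu}$ is measurable and satisfies $\sup_{x\in\Omega}\mu^x(\mathbb{T})\le b$. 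It then remains only to verify the extra clause in the definition of $\bar{\mathcal{M}}^{b,2}$: that $x\mapsto\mu^x(S)$ is continuous for every $S\in\mathcal{B}(\mathbb{T})$ with $\lambda(\partial S)=0$.

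The first step is to extract pointwise information from $\bar{d}^{b}$-convergence. Arguing as in the proof of Lemma~\ref{lem: db and db2 are equival metrics} (testing $\bar{d}^{b}$ against the members of $\mathcal{G}$ associated to Dirac measures $l=\delta_x$), one obtains $\bar{d}^{b}(\bar{\mu}_n,\bar{\mu})\ge\sup_{x\in\Omega}d_{BL}(\mu^x_n,\mu^x)$, so $d_{BL}(\mu^x_n,\mu^x)\to 0$, and hence $\mu^x_n\to_w\mu^x$, for \emph{every} $x\in\Omega$. Now fix $S$ with $\lambda(\partial S)=0$, a point $x_0\in\Omega$ and any sequence $x_j\to x_0$, and write
\[
\mu^{x_j}(S)-\mu^{x_0}(S)=\big(\mu^{x_j}(S)-\mu^{x_j}_n(S)\big)+\big(\mu^{x_j}_n(S)-\mu^{x_0}_n(S)\big)+\big(\mu^{x_0}_n(S)-\mu^{x_0}(S)\big).
\]
For each fixed $n$ the middle term vanishes as $j\to\infty$ because $\bar{\mu}_n\in\bar{\mathcal{M}}^{b,2}$. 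For the two outer terms one uses $\lambda(\partial S)=0$ and the regularity of $\lambda$ to sandwich $S$ between a closed set and an open set whose boundaries are $\lambda$-null, applies the Portmanteau theorem (cf.~Remark~\ref{rem: pointwise convergence and graphop convergence}) to the weak convergences $\mu^x_n\to_w\mu^x$, and invokes the uniform mass bound $b$ to make the resulting estimates uniform in $j$. Sending first $j\to\infty$ and then $n\to\infty$ gives $\mu^{x_j}(S)\to\mu^{x_0}(S)$, i.e., the desired continuity of $x\mapsto\mu^x(S)$, whence $\bar{\mu}\in\bar{\mathcal{M}}^{b,2}$.

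The genuinely delicate point — and the main obstacle — is this last transfer of the continuity-in-$x$ property to the limit: $d_{BL}$-convergence of the fibers controls $\mu^x(S)$ only when $S$ is a $\mu^x$-continuity set, not for an arbitrary Borel set, so one cannot simply pass to the limit in $\mu^{x}_n(S)$. It is precisely the hypothesis $\lambda(\partial S)=0$ that repairs this, by permitting $S$ to be approximated, from inside and outside, by sets that are simultaneously continuity sets for all the measures $\mu^x_n,\mu^x$ involved; organizing this approximation so that the error stays controlled uniformly in $j$ (and $n$) is the only real work. All remaining ingredients — completeness of the ambient space, the mass bound, and measurability of the limit — are inherited directly from Lemma~\ref{lem: Mb is complete}.
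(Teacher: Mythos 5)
Your overall architecture coincides with the paper's: reduce to showing that $\bar{\mathcal{M}}^{b,2}$ is closed in the complete space $(\bar{\mathcal{M}}^{b},\bar{d}^{b})$, note (via the Dirac-type elements of $\mathcal{G}$ already used to prove that $\bar{d}^{b}$ is a metric) that $\bar{d}^{b}$-convergence forces $\sup_{x}d_{BL}(\mu_n^x,\mu^x)\to0$, and then pass the continuity of $x\mapsto\mu^x(S)$ to the limit through the same three-term triangle inequality; the paper dismisses the two outer terms in one line, whereas you correctly single them out as the delicate point. Unfortunately the repair you sketch does not close the gap. The sandwich $F\subset S\subset U$ together with Portmanteau only yields $\mu^x(F^{\circ})\le\liminf_n\mu_n^x(S)\le\limsup_n\mu_n^x(S)\le\mu^x(\overline{U})$, and to conclude you would need $\mu^x(\overline{U}\setminus F^{\circ})$ to be small; but the regularity of $\lambda$ only lets you make $\lambda(\overline{U}\setminus F^{\circ})$ small, and nothing forces the limit fibers $\mu^x$ to be absolutely continuous with respect to $\lambda$. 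Weak convergence controls $\mu_n^x(S)\to\mu^x(S)$ only when $S$ is a $\mu^x$-continuity set, and $\lambda(\partial S)=0$ does not imply $\mu^x(\partial S)=0$.

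In fact no repair along these lines is possible, because the closedness claim itself fails. Take $\Omega=\mathbb{T}$ with the Haar measure and let $\mu_n^x$ be the uniform probability measure on the arc of length $2/n$ centred at $x$. Then $x\mapsto\mu_n^x(S)=\tfrac{n}{2}\,\lambda\bigl(S\cap[x-\tfrac1n,x+\tfrac1n]\bigr)$ is Lipschitz for \emph{every} Borel $S$, so $\bar{\mu}_n\in\bar{\mathcal{M}}^{b,2}$ for $b\ge1$; moreover $d_{BL}(\mu_n^x,\delta_x)\le 1/n$ uniformly in $x$, and since $\gamma_B\le1$ for all $B\in\mathcal{G}$ this makes $\{\bar{\mu}_n\}$ a $\bar{d}^{b}$-Cauchy sequence with limit $\bar{\mu}=\{\delta_x\}_{x\in\Omega}$ in $\bar{\mathcal{M}}^{b}$. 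But for an arc $S=[a,a']$ one has $\lambda(\partial S)=0$ while $x\mapsto\delta_x(S)=\chi_{[a,a']}(x)$ is discontinuous, so $\bar{\mu}\notin\bar{\mathcal{M}}^{b,2}$; note $\mu_n^a(S)\to\tfrac12\ne1=\delta_a(S)$, which is precisely your outer term refusing to vanish. So both your argument and the paper's break at the same step, and some additional hypothesis (equicontinuity of $x\mapsto\mu_n^x(S)$ along the sequence, or uniform absolute continuity of the fibers with respect to $\lambda$, or weakening the continuity clause to $\mu^x$-continuity sets) is needed for the lemma to hold.
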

\begin{proof}
	Let $\{ \bar{\mu}_n \}_{n\in \mathbb{N}}$ be a Cauchy sequence in $\bar{\mathcal{M}}^{b,2}$. Similarly to Lemma \ref{lem: Mb,c is complete} one can show $\{\mu_{n_k}^y\}$ is Cauchy for $\nu_x^B$-a.a. $y \in  \Omega$, for every $x \in \Omega$ there exists the limit $\mu \in \mathcal{M}^b$,
	\begin{equation*}
	\mu^y = \lim_{k\to \infty} \mu_{n_k}^y, \text{\quad for $\nu_x^B$-a.a. $y \in  \Omega$, for every $x \in \Omega$}.
	\end{equation*}
	By the triangular inequality we see that for any $S \in \mathcal{B}(\mathbb{T})$ with $\lambda(\partial{S}) = 0$ and $x,y\in \Omega$ we have
	\begin{equation*}
	|\mu^x(S) -\mu^{y}(S)|  \leq |\mu^x(S) -\mu^{x}_{n_k}(S) | + |\mu^{x}_{n_k}(S) - \mu^y_{n_k}(S)| + |\mu^{y}_{n_k}(S) - \mu^y(S)|.
	\end{equation*}
	Since the right side can be made arbitary small as $k\to \infty$ and $x\to y$, the map 
	$x\mapsto \mu^x(S)$ is continuous. Thus, $\mu \in \mathcal{M}^{b,2}$ and  $(\bar{\mathcal{M}}^{b,2},\bar{d}^{b})$ is a closed subspace of the complete metric space $(\bar{\mathcal{M}}^{b},\bar{d}^{b}) $. This finished the proof.	
\end{proof}

\end{document}